\let\mathcal\mathscr
\def\Z{{\bf Z}}
\def\F{{\bf F}}
\def\C{{\bf C}}
\def\P{{\bf P}}
\def\ppav{principally polarized abelian variety}
\def\phi{{\varphi}}
\def\cI{\mathcal{I}}
\def\cA{\mathcal{A}}
\def\cF{\mathcal{F}}
\def\cL{\mathcal{L}}
\def\cO{\mathcal{O}}
\def\cG{\mathcal{G}}
\def\cH{\mathcal{H}}
\def\cE{\mathcal{E}}
\def\cS{\mathcal{S}}
\def\cM{\mathcal{M}}
\def\cN{\mathcal{N}}
\def\cK{\mathcal{K}}
\def\cT{\mathcal{T}}
\def\cY{\mathcal{Y}}
\def\cX{\mathcal{X}}
\def\lra{\longrightarrow}
\def\llra{\hbox to 10mm{\rightarrowfill}}
\def\lllra{\hbox to 15mm{\rightarrowfill}}
\def\llla{\hbox to 10mm{\leftarrowfill}}
\def\lllla{\hbox to 15mm{\leftarrowfill}}
\def\dra{\dashrightarrow}
\def\isom{\simeq}
\def\eps{\varepsilon}
\def\ie{\hbox{i.e.}}
\def\Im{\mathop{\rm Im}\nolimits}
\def\vide{\varnothing}
\DeclareMathOperator{\isomto}{\stackrel{{}_{\scriptstyle\sim}}{\to}}
\DeclareMathOperator{\isomlra}{\stackrel{{}_{\scriptstyle\sim}}{\lra}}
\DeclareMathOperator{\Sym}{Sym}
\DeclareMathOperator{\Pic}{Pic}
\DeclareMathOperator{\Hom}{Hom}
\def\Im{\mathop{\rm Im}\nolimits}
\DeclareMathOperator{\Alb}{Alb}
\DeclareMathOperator{\Ext}{Ext}
\DeclareMathOperator{\Id}{Id}
\DeclareMathOperator{\Aut}{Aut}
\DeclareMathOperator{\PGL}{PGL}
\DeclareMathOperator{\GL}{GL}
\DeclareMathOperator{\SL}{SL}
\DeclareMathOperator{\Bir}{Bir}
\DeclareMathOperator{\len}{length}
\def\llra{\hbox to 10mm{\rightarrowfill}}
\def\lllra{\hbox to 15mm{\rightarrowfill}}
\newtheorem{lemm}{Lemma}[section]
\newtheorem{theo}[lemm]{Theorem}
\newtheorem{coro}[lemm]{Corollary}
\newtheorem{prop}[lemm]{Proposition}
\newtheorem*{conj}{Conjecture}
\newtheorem*{prob}{Problem}
\theoremstyle{definition}
\newtheorem{rema}[lemm]{Remark}
\theoremstyle{remark}
\newtheorem*{remark*}{Remark}
\newtheorem*{note*}{Note}
\def\ot{\otimes}
\def\op{\oplus}
\def\ra{\to}
\begin{document}
\title[Prime Fano threefolds  of degree 10]{On the period map for   prime Fano threefolds of degree 10}
\author[O. Debarre]{Olivier Debarre}
\address{D\'epartement de  Math\'ematiques et Applications -- UMR 8553,
\'Ecole Normale Sup\'erieure,
45 rue d'Ulm, 75230 Paris cedex 05, France}
\email{{\tt odebarre@dma.ens.fr}}
\author[A. Iliev]{Atanas Iliev}
\address{Institute of Mathematics,
Bulgarian Academy of Sciences,
Acad. G. Bonchev Str., bl. 8, 
1113 Sofia, Bulgaria}
\email{{\tt ailiev@math.bas.bg}}
\author[L. Manivel]{Laurent Manivel}
\address{Institut Fourier,  
Universit\'e de Grenoble I et CNRS,
BP 74, 38402 Saint-Martin d'H\`eres, France}
\email{{\tt Laurent.Manivel@ujf-grenoble.fr}}

\def\moins{\mathop{\hbox{\vrule height 3pt depth -2pt
width 5pt}\,}}

\maketitle
\tableofcontents

  \section{Introduction: two problems about   Fano threefolds of degree $10$}\label{intro}

There are $17$   families of 
smooth Fano threefolds with Picard number $ 1$: the projective  space $\P^3$, 
the smooth  quadric $Q\subset\P^4$, the five families  $\cY_d$, $d\in\{1,\dots,5\}$ 
of Fano threefolds of index $2$ and degree $d$, and the $10$ families   
$\cX_{2g-2}$, $g\in\{2,3,\dots,10,12\}$ of Fano threefolds of index $1$ and degree $2g-2$ (see table in  \cite{ip}, \S 12.2). We will denote by $Y_d$ a Fano threefold belonging to the family $\cY_d$, and by $X_{2g-2}$ a member of $\cX_{2g-2}$.

Any threefold from the $8$ families $\P^3$, $Q$, $\cY_4$, $\cY_5$, $\cX_{12}$, $\cX_{16}$, $\cX_{18}$, and $\cX_{22}$ 
is rational.

For  threefolds $X$ from the remaining $9$ families  
$\cY_1$, $\cY_2$ (quartic double solids), $\cY_3$  (cubics), and $\cX_{2g-2}$, $g\in\{2,\dots,6,8\}$,
there has been two   approaches to proving or disproving rationality: 
\begin{itemize}
\item studying the  group $\Bir(X)$ 
of birational automorphisms of $X$. This group is known for  $ X_2$, $X_4$, $X_6$, and $Y_1$   (\cite{Is2}, \cite{Grin});
\item studying the intermediate Jacobian $J(X)$ of $X$. This \ppav\ has been well-studied for 
$ Y_2$, $ Y_3$, and $ X_8$. In particular, the  Torelli theorem holds (\cite{cg}, \cite{voi}, \cite{deb}).
\end{itemize}
One important outcome   is that any $X$ from the above 
$7$ families is not rational: in the first case because  
$\Bir(X)$ differs from the Cremona group $\Bir(\P^3)$, and in the second case
 because $J(X)$ is not  a product of Jacobians of curves. 

\medskip
The two remaining families are $\cX_{14}$ and $\cX_{10}$. 

It was known to Fano that any   
$X_{14}$ is birational to a smooth cubic threefold  $Y_3$.  
In particular, no   $X_{14}$ is rational (\cite{bbb}, Theorem 5.6 (i)).
Moreover, this implies that the $5$-dimensional intermediate Jacobians $J(X_{14})$ and $J(Y_3)$ are isomorphic. Together with the Torelli theorem for cubic threefolds, 
this implies that the set of all $X_{14}$ that are birational to a given cubic $Y_3$   
is an irreducible fivefold birational to   $ J(Y_3)$ and that this set is  the fiber through $[X_{14}]$  of the period map   $\cX_{14}\to\cA_5$  
(\cite{IMr}). 
   
\medskip  
 Many geometrical properties of $X_{10}$ were discovered   in 1982 by Logachev  (see \cite{lo}, posted 22 years later), but neither is the group $\Bir(X_{10})$   known, nor has
the   intermediate Jacobian $J(X_{10})$   been   studied.

 As noticed by Tyurin, the Fano threefold $X_{10}$ shares certain   properties 
with the quartic double solid $Y_2$. In particular,    their 
intermediate Jacobians are both $10$-dimensional.\footnote{\label{pgq}Unknown to Tyurin at the time, there are also numerical coincidences between the invariants of the
 so-called Fano surface $F(Y_2)$ of lines on   $Y_2$ (computed by Welters in   \cite{wel}), and those of    (the minimal model of) the Fano surface $F(X_{10})$ of {\em conics} on  $X_{10}$ (computed by Logachev in \cite{lo}): they   both satisfy  $p_g = 101$, $q = 10$, $c_1^2 = 720$, and $c_2 = 384$. This coincidences stem from the fact that a general $Y_2$ is isomorphic to a {\em singular} $X_{10}$ (see Proposition \ref{explain}).}  
 Thinking that the situation would be analogous to the correspondences between   $X_{14}$  and  $Y_3$ described above, Tyurin   stated the following (\cite{Tyu}, p. 739). 
 
\begin{conj}[Tyurin, 1979]\label{tc} 
The general Fano threefold $X_{10}$ is birational to a quartic double solid. 
\end{conj}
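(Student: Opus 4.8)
The plan is to imitate Fano's birational equivalence $X_{14}\dra Y_3$ recalled above, realizing a general $X_{10}$ as birational to a double cover of $\P^3$ by an explicit projection, and then to corroborate the construction at the level of intermediate Jacobians. First I would embed $X:=X_{10}\subset\P^7$ by $|{-}K_X|$ and perform a \emph{double projection} from a line $\ell\subset X$ (a general $X_{10}$ contains a one-parameter family of lines): blow up $\ell$ together with the relevant infinitesimal data and study the map defined by the subsystem $|{-}K_X-2\ell|$ of divisors singular along $\ell$. Computing the degree and sectional genus of the image, the aim is to land on a threefold carrying a generically $2:1$ structure over $\P^3$. The alternative center to try is a conic $C\subset X$: since the Fano surface of conics $F(X_{10})$ matches the Fano surface of lines $F(Y_2)$ numerically (noted above), one expects conics on $X_{10}$ to correspond to lines on $Y_2$ under the sought birational map, so projection from a conic is a natural second candidate.

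Next I would identify this degree-two structure explicitly. Taking the Stein factorization of the projection, one exhibits the deck involution and computes the branch locus by the ramification formula; the expected outcome is a quartic surface $S\subset\P^3$, so that $X$ is birational to the quartic double solid $Y_2$ branched along $S$. Proposition \ref{explain}, which identifies a general $Y_2$ with a \emph{singular} $X_{10}$, is the guiding evidence here: it shows the correspondence genuinely exists on the boundary of $\cX_{10}$, and the task is to propagate it to the smooth general member, \ie\ to check that the projection does not degenerate and that the resulting map is birational (equivalently, generically two-to-one onto $\P^3$) for the generic $X_{10}$.

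The decisive step---and the main obstacle---is Hodge-theoretic. A birational equivalence $X_{10}\dra Y_2$ forces $J(X_{10})\isom J(Y_2)$ as \ppavs, so the image of the period map $\cX_{10}\to\cA_{10}$ must lie in the closure of the locus $J(\cY_2)$, of dimension $\dim\cY_2=19$. Since $\dim\cX_{10}=22$, this can hold only if the period map has generic fibers of dimension at least $22-19=3$ and its image is no larger than that of the quartic double solids; this is the exact analogue of the fivefold fibers in the $X_{14}$ case, except that the numbers no longer reflect an isomorphism of the full $10$-dimensional intermediate Jacobian along a single irreducible family. I therefore expect the crux to be the computation of the rank of the differential of the period map $\cX_{10}\to\cA_{10}$ and the comparison of its image with $\overline{J(\cY_2)}$: establishing that this image is contained in the $Y_2$-locus is precisely what would complete the proof, and is where the argument is most fragile.
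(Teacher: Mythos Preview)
Your proposal attempts to \emph{prove} the conjecture, but the paper \emph{disproves} it (Corollary~\ref{notiso}): the general $X_{10}$ is \emph{not} birational to any smooth quartic double solid. The obstacle you isolate in your final paragraph is not a fragile step to be overcome---it is precisely the obstruction that kills the conjecture.

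Concretely, the paper computes the differential of the period map $\wp:\cN_{10}\to\cA_{10}$ (Theorem~\ref{diffper}) and finds its kernel to be $2$-dimensional (naturally identified with $V_5/U_3$), so $\wp$ is smooth with image of dimension $20$, not $\le 19$. Your own dimension count then runs in reverse: the general fiber of $\wp$ has dimension $2<3$, hence the image of $\wp$ cannot lie in the closure of the $19$-dimensional locus of intermediate Jacobians of quartic double solids. To convert this into a clean nonbirationality statement one must also know that $J(X_{10})$ has no Jacobian factors, so that the Clemens--Griffiths component $J_G(X_{10})$ equals $J(X_{10})$; the paper gets this by degenerating $J(X_{10})$ to the intermediate Jacobian of a general quartic double solid (via Proposition~\ref{explain}), whose theta divisor has singular locus of codimension $5$. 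Thus the boundary correspondence you cite as ``guiding evidence'' is used only to control the Griffiths component, not to propagate a birational map to the smooth general member. No such propagation exists, and your projection from a line or a conic will not produce a degree-$2$ map to $\P^3$ for general $X$: indeed the paper analyzes both projections in detail (\S\ref{el} and \S\ref{ec}) and shows they are birational onto their images in $\P^5$ and $\P^4$ respectively, landing back in $\cX_{10}$ after a flop.
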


We give a negative answer 
to this conjecture (Corollary \ref{notiso}).

\medskip

Very little is known about the group $\Bir(X_{10})$ 
(\cite{Is3}, Problem 4 (b)). 
As far as we know, the only   approach  was initiated 
in the 80's by Khashin: in the short note \cite{has},    he describes birational involutions  of $ X_{10} $ (associated with points, twisted cubics, and elliptic quartics contained in $X_{10}$). Given a line $\ell\subset X_{10}$ (resp. a conic $c\subset X_{10}$), he also constructs a birational isomorphism   $\psi_\ell:X_{10}\dra X_\ell$ (resp. $\psi_c:X_{10}\dra X_c$), where $X_\ell$ (resp. $X_c$) is also in $\cX_{10}$ (see \S\ref{el} and \S\ref{ec}).\footnote{Khashin also stated without proof that the existence of a birational isomorphism  $f$ between  $X_{10}$ and any other Fano threefold $Y$ of index one, should imply that $Y$ is also   in $\cX_{10}$, and that $f$ is a composition of birational isomorphisms of the five types described above.  }

Since $X_\ell$ and $X_c$ have the same intermediate Jacobian as $X_{10}$, the matter of deciding whether they are isomorphic is of great interest for the Torelli problem. 
However, this problem  remained unsolved for years (\cite{ip}, Problem 11.4.2 (ii)).

\begin{prob}[Khashin, 1986]
Are   $X_\ell$ and $X_c$ isomorphic to $X_{10}$?
\end{prob}

We also give a negative answer   to this problem, by 
proving that in  
general, neither $X_\ell$, nor $X_c$, is isomorphic to $X_{10}$. 
\medskip

Both answers are consequences of our study of the period map $\wp:\cX_{10}\to \cA_{10}$.
We show  (Theorem \ref{fibb}) that the set  of all conic transforms $X_c$, as $c$ varies in the Fano surface $F(X_{10})$ of conics contained in $X_{10}$, forms a connected component of the fiber of $\wp$ through $[X_{10}]$ which is  birationally isomorphic to the quotient  of $F(X_{10})$ by a geometrically meaningful involution $\iota$.

On the other hand, consider points of $\cX_{10}$ corresponding to general line transforms $X_\ell$. Their conic transforms form another $2$-dimensional component of the fiber of $\wp$ over $[J(X_{10})]$ which is birationally isomorphic to the quotient by  a geometrically meaningful involution of the moduli space $\cM_X(2;1,5)$ of stable rank-$2$ torsion-free sheaves on $X_{10}$  with Chern numbers $c_1=1$, $c_2=5$, and $c_3=0$, itself a smooth irreducible surface (Proposition \ref{pro81}  and Theorem \ref{th82}).

We conjecture that a general fiber of the period map $\wp$ is just the union of these two surfaces.
  Although the  study of certain degenerations of elements of $\cX_{10}$ provide strong evidence for this conjecture, our lack of  knowledge about the properness of $\wp$ makes it very difficult to
 obtain more information about it at the moment.

 \section{Notation}
 
 $\bullet$ As a general rule, $V_m$ denotes an $m$-dimensional  vector space, and $\Gamma_d^g$ a degree-$d$ curve with geometric genus $g$. Fiber bundles are denoted by script letters; for example, $\cS_{r,V}$ is the rank-$r$ tautological subbundle on the Grassmannian $G(r,V)$.
 
$\bullet$  $V_5$ is a $5$-dimensional complex vector space, $V_{10}=\wedge^2 V_5$, and the Pl\"ucker map embeds $G(2,V_5) $ into $\P_9=\P(V_{10})$.
 
  $\bullet$ $V_8\subset\wedge^2 V_5$ is a   codimension-$2$ linear subspace whose orthogonal      $L\subset \wedge^2 V^\vee_5$ consists of skew forms on $V_5$ that are all of maximal rank $4$, and $\P_7=\P(V_8)$.

 $\bullet$ $W$ is the  {\em smooth} $4$-fold $G(2,V_5)\cap \P_7$,  of degree $5$ in $ \P_9$ (\S\ref{fourw}).  
  
 $\bullet$ $U_3\subset V_5$ is the unique $3$-dimensional subspace totally isotropic for all forms in 
 $V_8^\bot$, with dual line $L_U\subset\P(V_5^\vee)$.
 
 $\bullet$ $\Pi$ is the $2$-plane $G(2,U_3)\subset W$.

 $\bullet$  $\Omega$ is a quadric in $ \P_9$ such that
$X=W\cap \Omega=G(2,V_5)\cap \P_7 \cap \Omega\subset  \P_9
$ is a (smooth) Fano threefold (\S\ref{fanox}).

 $\bullet$ $c_X=\Pi\cap \Omega$ is the unique $\rho$-conic on  $X$ (\S\ref{pla}).

 $\bullet$ $F_g(X)$ is the smooth connected surface that parametrizes conics on $X$ (the letter $g$ stands for ``geometric'') (\S\ref{surF}).
 
 $\bullet$ $L_\sigma\subset F_g(X)$ is the curve of $\sigma$-conics in $X$ (\S\ref{surF}). It is   exceptional, and if $F_g(X)\to F_m(X)$ is its contraction, $F_m(X)$ is a smooth minimal surface of general type.

 $\bullet$ $F(X)=\{(c,V_4) \in F_g(X)\times \P(V_5^\vee) \mid  c\subset G(2,V_4)\}
$ is a smooth surface and $p_1:F(X)\to F_g(X)$ is the blow-up of the point corresponding to $c_X$ (\S\ref{surF}, \S\ref{stbt}).

$\bullet $ $\iota$ is the fixed-point-free involution on $F(X)$ defined in \S\ref{invo}. We set $F_\iota(X)=F(X)/\iota$ and $F_{m,\iota}(X)=F_m(X)/\iota$. There is a diagram
$$
\xymatrix
{F(X)\ar[rr]^{\hbox{\tiny quotient by }\iota}\ar[d]_{\scriptstyle\hbox{\tiny blow-up of }[c_X] }^{p_1}&&F_\iota(X)\ar[dd]^{\scriptstyle\hbox{\tiny blow-up of } \pi([c_X]) }\\
F_g(X)\ar[d]_{\scriptstyle\hbox{\tiny blow-up of } \iota([c_X])}\\
F_m(X)\ar[rr]^{\rm quotient\ by\ \iota}_\pi&&F_{m,\iota}(X)
}
$$ 

$\bullet$ For any hyperplane $ V_4\subset V_5$, we let  $M_{V_4} $ be the $4$-dimensional vector space $ \wedge^2 V_4 \cap V_8$.

$\bullet$ $Q_{W,V_4}$ is the quadric surface $G(2,V_4)\cap \P(M_{V_4})$, contained in $W$.

$\bullet$ $Q_{\Omega,V_4}$ is the quadric surface $\Omega\cap \P(M_{V_4})$.

\section{The fourfold $W$}\label{sec3}

Except for the cohomology calculations of \S\ref{coca}, all the material in this section is either classical, or is due to Logachev (\cite{lo}).

\subsection{Lines, $2$-planes, and conics   in $G(2,V_5)$}\label{pla} We denote by $V_i$ an arbitrary subspace of $V_5$ of dimension $i$.

All lines in $G(2,V_5)$ are of the   type:
$$\{ [V_2] \mid V_1\subset V_2\subset V_3\}.$$

Any $2$-plane  in    $G(2,V_5)$ is of one of the following     types:
\begin{itemize}
\item an $\alpha$-plane: $\{ [V_2] \mid  V_1\subset V_2\subset V_4\}$;
\item a $\beta$-plane: $\{ [V_2] \mid   V_2\subset V_3\} $.
\end{itemize}

Any  (possibly nonreduced or reducible) conic $c$ in $G(2,V_5)$ is of one of the following types:
\begin{itemize}
\item a {\em $\tau$-conic}: the $2$-plane $\langle c\rangle$ is not contained in $G(2,V_5)$, there is a unique hyperplane $V_4\subset V_5$ such that $c\subset G(2,V_4)$, the conic $c$ is reduced and, if it is smooth, the union of the corresponding lines in $\P(V_5)$ is a smooth quadric surface in $\P(V_4)$;
\item a {\em $\sigma$-conic}: the $2$-plane $\langle c\rangle$ is an $\alpha$-plane, there is a unique hyperplane $V_4\subset V_5$ such that $c\subset G(2,V_4)$,  and the union of the corresponding lines in $\P(V_5)$ is a quadric cone in $\P(V_4)$;
\item a {\em $\rho$-conic}: the $2$-plane $\langle c\rangle$ is a $\beta$-plane and the union of the corresponding lines in $\P(V_5)$ is this $2$-plane.
\end{itemize}

\subsection{The fourfold $W$}\label{fourw}   
Choose a   linear space $\P_7=\P(V_8)$ of codimension $2$ in $\P_9=\P(\wedge^2 V_5)$  whose dual pencil     $\P(V_8^\bot)\subset \P(\wedge^2 V_5^\vee)$ consists of skew forms on $V_5$, all of maximal rank $4$. This is equivalent to saying that the intersection
$$W=G(2,V_5)\cap \P_7 \subset \P(\wedge^2 V_5)
$$
is a smooth fourfold (of degree $5$). 

The map $ \P(V_8^\bot) \to \P ( V_5 )$ that sends a skew form to its kernel has image a smooth conic $c_U$ that spans a $2$-plane $\P (U_3)$ (\cite{pv}, Proposition 6.3), where   $U_3\subset  V_5$ is the unique common maximal isotropic subspace for all forms in   $ V_8^\bot $.
A normal form for matrices in the pencil $  \P(V_8^\bot)$ is given in \cite{pv}, Proposition 6.4; it shows that all $W$ are isomorphic under the action of $\PGL(V_5)$.

More precisely, if we choose $U_2$ such that
$V_5=U_2\oplus U_3$, we have  $U_3\isom \Sym^2U_2^\vee $. As   ${\mathfrak sl}_2$-modules, 
$$\wedge^2V_5\isom U_1\oplus U_2\oplus U_3\oplus U_4,$$
where $U_k$ is an irreducible ${\mathfrak sl}_2$-module of dimension $k$. In particular, 
$\wedge^2V_5$ contains a unique codimension-two submodule $V_8=U_1\oplus U_3\oplus U_4$, which can also be defined as the kernel of the natural map 
$$\wedge^2V_5\rightarrow U_2\otimes U_3\ra U_2^\vee,$$
where the rightmost map is the contraction $x\otimes q\mapsto q(x,\cdot )$, for $x$ in $U_2$ and 
$q$ a   symmetric bilinear  form on $U_2$. One can check that $\P (V_8)$ meets the Grassmannian 
$G(2,V_5)$ tranversely, so that the intersection is our smooth fourfold $W$.

\subsection{$2$-planes   in $W$}\label{2pla}   The $2$-plane
$\Pi=G(2,U_3) $ is the unique $\beta$-plane of $G(2,V_5)$ contained in $W$. 

An $\alpha$-plane contained in $W$ corresponds to a pair $V_1\subset V_4\subset V_5$ such that $\omega(v,w) =0$ for all $v\in V_1$, all $w\in V_4$, and all $\omega$ in the pencil $\P(V_8^\bot)$. It follows that $[V_1]$ must be in $c_U$ and $V_4$ is the common orthogonal of $v$ for all $\omega$ in $\P(V_8^\bot)$. There is a   line $L_U\subset \P(V_5^\vee)$ of such $2$-planes $\Pi_{V_4}$, corresponding to hyperplanes $V_4\subset V_5$ that contain $U_3$.

The intersection $\Pi\cap\Pi_{V_4}$ is a line which is tangent to the conic $c_U^\vee\subset\Pi$ dual to $c_U$, whereas two distinct $\Pi_{V_4}$ and $\Pi_{V'_4}$ meet at a point, which is on $\Pi$.

\subsection{The vector bundle $\cM$}\label{iden}
 For any hyperplane $ V_4\subset V_5$, the vector space 
 $M_{V_4}= \wedge^2 V_4 \cap V_8$
  has dimension $4$,\footnote{Otherwise, some form in the pencil $V_8^\bot$ would vanish on $V_4$ hence would have rank $\le 2$ (\cite{lo}, Lemma 3.1).} so this defines a rank-$4$ vector bundle $$\cM\to \P(V_5^\vee)$$
with fiber $M_{V_4}$ at $[V_4]$.
 The $3$-plane $\P(M_{V_4})$ contains the quadric surface\footnote{ This is indeed a surface since $\Pic(W)$ is generated by $\cO_W(1)$, hence all threefolds in $W$ have degree divisible by $5$.} 
\begin{equation}\label{qw}
Q_{W,V_4}=G(2,V_4)\cap \P(M_{V_4})\subset W
\end{equation}
which  is reducible if and only if it is of the form $\Pi\cup\Pi_{V_4}$; this happens if and only if $U_3\subset V_4$, \ie, $[V_4]\in L_U$.

\subsection{Automorphisms of $W$}\label{sautw}
Any automorphism of $W$ is induced by an automorphism of $\P(V_5)$ that maps the conic $c_U$ onto itself (\cite{pv}, Theorem 1.2). The automorphism group of $W$ has dimension $8$ and fits into an exact sequence  (\cite{pv}, Theorem 6.6)
\begin{equation}\label{autw}
1\to \C^4\ltimes \C^*\to \Aut(W) \to \Aut(c_U)\isom \PGL(2,\C)\to 1.
\end{equation}
For later use, we will describe this sequence more explicitely  with the help of a decomposition 
$V_5=U_2\oplus U_3$ as in \S\ref{fourw}. Let   $\Aut^+(W)\subset \GL(V_5)$ be the pull-back of $\Aut(W)\subset \PGL(V_5)$. 
As we saw in \S\ref{fourw}, any $\varphi\in\Aut^+(W)$ must preserve $U_3$, so it decomposes as 
$$\varphi=\left( \begin{matrix} \phi_2 & 0 \\ \phi_0 & \phi_3 \end{matrix}\right),$$
and the projection $\varphi\mapsto\varphi_2$ induces the map $\Aut(W) \to \Aut(c_U)$ in (\ref{autw}).
It also needs to preserve $V_8\subset \wedge^2V_5$, and this implies that
$\varphi_3$ must be a nonzero multiple of $\Sym^2{}^t\varphi_2 $, the map induced by $\varphi_2$
on $U_3=\Sym^2U_2^\vee$; this nonzero multiple gives the $\C^*$-factor in the sequence above. 

The   $\C^4$-factor corresponds to the case where $\varphi_2$ and $\varphi_3$ are   the identity, in which case $\phi_0\in \Hom(U_2,U_3)=\Hom(U_2,\Sym^2U_2^\vee)$ must be such that 
$$\forall u,v\in U_2  \quad \varphi_0(u)(v, \cdot)= \varphi_0(v)(u, \cdot) .$$ 
This is equivalent to the condition that $\varphi_0$ be completely symmetric, that is, belong  to the image 
of the natural map $\Sym^3U_2^\vee\to \Hom(U_2,\Sym^2U_2^\vee)$. In particular, the $\C^4$-factor in the sequence above is 
a copy of $\Sym^3U_2^\vee$ as an $\SL(2,\C)$-module.

Finally, the group $\Aut(W)$ acts on $W$ with four orbits (\cite{pv}, Proposition 6.8):
\begin{itemize}
\item $O_1=c_U^\vee\subset\Pi$,
\item  $O_2=\Pi\moins c_U^\vee$,
\item  $O_3=\bigcup_{V_4\in L_U}\Pi_{V_4}\moins\Pi$,
\item   $O_4=W\moins O_3$, 
\end{itemize}
where $\dim(O_j)=j$.

\subsection{Rationality of $W$}\label{birw}

Once we have chosen, as in \S\ref{fourw}, a decomposition $V_5=U_2\op U_3$, with $U_3=\Sym^2U_2^\vee$, we get
an induced action of $\GL(2,\C)$ on $W$. 

\begin{prop}
The smooth fourfold $W$ is a compactification of the affine homogeneous space 
$\GL(2,\C)/\mathfrak{S}_3$. 
\end{prop}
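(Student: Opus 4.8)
The plan is to exhibit an explicit open $\GL(2,\C)$-orbit inside $W$ together with the identification of a point stabilizer with $\mathfrak{S}_3$. Fix the decomposition $V_5=U_2\oplus U_3$ with $U_3\isom\Sym^2U_2^\vee$ as in \S\ref{fourw}, giving the $\GL(2,\C)$-action on $W\subset\P(\wedge^2V_5)$. Since $W$ has dimension $4$ and $\GL(2,\C)$ is $4$-dimensional, it suffices to find one point $w_0\in W$ whose orbit is open (equivalently, whose stabilizer is finite) and to compute that stabilizer; the open orbit is then automatically the affine homogeneous space $\GL(2,\C)/\Stab(w_0)$, and $W$ is a (smooth, projective) compactification of it.

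First I would decompose $\wedge^2 V_5$ as an $\fsl_2$-module. Writing $V_5=U_2\oplus U_3$, one has $\wedge^2V_5=\wedge^2U_2\oplus(U_2\ot U_3)\oplus\wedge^2U_3$, and since $U_2$ is the standard $2$-dimensional module and $U_3=\Sym^2U_2^\vee$ the adjoint-type module, Clebsch--Gordan gives $\wedge^2U_2\isom U_1$, $U_2\ot U_3\isom U_2\oplus U_4$, and $\wedge^2U_3\isom U_3$; this matches the stated decomposition $\wedge^2V_5\isom U_1\oplus U_2\oplus U_3\oplus U_4$ and identifies $V_8=U_1\oplus U_3\oplus U_4$ as the complement of the $U_2$-summand, so $W=G(2,V_5)\cap\P(V_8)$ is $\GL(2,\C)$-stable. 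Next I would pick a concrete $w_0$: take $w_0=[u\wedge q_0]$ for a suitable decomposable vector lying in $V_8$ whose $G(2,V_4)$-label $V_4$ does \emph{not} contain $U_3$ (so that $w_0\in O_4$, the open orbit of \S\ref{sautw}), and explicitly write down the $2$-dimensional subspace of $V_5$ it represents. A convenient choice is a subspace spanned by one vector of $U_2$ and one "generic" vector mixing $U_2$ and $U_3$; one checks membership in $\P(V_8)$ using the description of $V_8$ as the kernel of the contraction $\wedge^2V_5\to U_2^\vee$.

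The main computation — and the main obstacle — is the stabilizer calculation: I must show $\Stab_{\GL(2,\C)}(w_0)\isom\mathfrak{S}_3$. Concretely, $g\in\GL(2,\C)$ fixes $w_0=[V_2]$ iff $g$ (acting on $V_5$ through $\mathrm{id}_{U_2}\oplus\ldots$, i.e.\ via its natural action on $U_2$ and the induced action $\Sym^2\,{}^tg$ on $U_3$) maps the line $\wedge^2V_2\subset\wedge^2V_5$ to itself. Writing $g$ in a basis of $U_2$ and translating this into equations on the matrix entries, one finds the condition forces $g$ to permute a distinguished finite configuration; I expect the cleanest route is to observe that the $\GL(2,\C)$-equivariant geometry here is that of binary forms, and that a generic point of $W$ corresponds to an unordered triple of points on the conic $c_U\subset\P(U_3)$ — equivalently a binary cubic form up to scalar with distinct roots — whose automorphism group inside $\PGL(2,\C)$ (resp.\ $\GL(2,\C)$ after accounting for the $\C^*$ acting trivially projectively) is $\mathfrak{S}_3$. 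I would make this precise by constructing the $\GL(2,\C)$-equivariant rational map $W\dashrightarrow\P(\Sym^3U_2^\vee)/\text{scalars}\isom\P^3$ (using that the $\C^4$-factor of $\Aut(W)$ in \eqref{autw} is exactly $\Sym^3U_2^\vee$, which pins down which $3$-dimensional quotient of $\wedge^2V_5$ to use), show it is dominant, identify the image of the open orbit with the locus of cubics with three distinct roots, and invoke the classical fact that the stabilizer of such a cubic in $\PGL(2,\C)$ is $\mathfrak{S}_3$. Lifting to $\GL(2,\C)$ and checking the central $\C^*$ contributes nothing new to the stabilizer (it already accounts for the homothety identification of $w_0$ as a \emph{point} of projective space) then yields $\Stab(w_0)\isom\mathfrak{S}_3$, and hence the open $\GL(2,\C)$-orbit in $W$ is $\GL(2,\C)/\mathfrak{S}_3$, which is affine since $\mathfrak{S}_3$ is reductive (Matsushima). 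Finally, openness of the orbit follows either from the stabilizer being finite and a dimension count, or directly from the orbit being $O_4=W\smallsetminus O_3$ in the notation of \S\ref{sautw}.
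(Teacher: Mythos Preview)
Your approach is essentially the paper's: associate to a general point of $W$ a binary cubic and identify the $\GL(2,\C)$-stabilizer with the permutation group of its roots. The paper does this in one line rather than via an equivariant rational map: a general $2$-plane $V_2\subset V_5=U_2\oplus U_3$ projects isomorphically to $U_2$, hence is the graph of a linear map $g_x\colon U_2\to U_3=\Sym^2U_2^\vee$; the membership condition $[V_2]\in\P(V_8)$ forces $g_x$ to be totally symmetric, so the cubic $y\mapsto g_x(y)(y)$ recovers $g_x$ and hence $[V_2]$, and its stabilizer is visibly $\mathfrak{S}_3$.

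Two small corrections to your version. First, the central $\C^*\subset\GL(2,\C)$ does \emph{not} act by scalars on $V_8$ (the weights on the summands $U_1$, $U_4$, $U_3$ are $\lambda^2$, $\lambda^3$, $\lambda^4$), so ``it already accounts for the homothety identification'' is wrong. What you actually need is that the center acts \emph{freely} on general graphs (the graph of $g$ is sent to the graph of $\lambda g$), so the stabilizer injects into $\PGL(2,\C)$; one then checks the full $\mathfrak{S}_3$ lifts. Second, the open $\GL(2,\C)$-orbit is strictly smaller than the $\Aut(W)$-orbit $O_4$: for instance $[U_2]\in W$ lies in $O_4$ but is fixed by all of $\GL(2,\C)$. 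So your last alternative fails, though the dimension count you offer alongside it is fine.
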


\begin{proof}
If $x$ is a general point in $W\subset G(2,V_5)$, the corresponding subspace in $V_5$ maps isomorphically to $U_2$, 
and is therefore   the graph of a map $g_x: U_2\to U_3$. The map $y\mapsto
g_x(y)(y)$ is a general cubic form on $U_2$, and the stabilizer of $x$ in $\GL(2,\C)$ must preserve the 
zeroes of this cubic form, hence be equal to the symmetric group $\mathfrak{S}_3$.
\end{proof}

As a consequence, $W$ is rational. For the proof of Logachev's reconstruction theorem \ref{rec}, we will need a 
precise description of an explicit birational isomorphism with $\P^4$. Recall  that   $W$ sits in $\P(V_8)=\P(U_1\op U_3\op U_4)$.

\begin{prop}\label{kappa}
The projection from $\P (U_3)$ defines a birational isomorphism    $\kappa :W\dra\P(U_1\op U_4) $.
The inverse $\kappa^{-1}$ is defined by the linear system of quadrics containing a   rational normal
cubic curve   $ \Gamma^0_3\subset \P(U_4)\subset \P(U_1\op U_4)$.
\end{prop}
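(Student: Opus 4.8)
The plan is to make everything explicit using the $\SL(2,\C)$-module decomposition already set up in \S\ref{fourw}. Recall that $V_5 = U_2 \oplus U_3$ with $U_3 = \Sym^2 U_2^\vee$, that $\wedge^2 V_5 = U_1 \oplus U_2 \oplus U_3 \oplus U_4$ as $\SL(2,\C)$-modules, and that $V_8 = U_1 \oplus U_3 \oplus U_4$. The subspace $U_3 \subset V_8$ is exactly $\wedge^2 U_3$ (the image of $\wedge^2 U_3 \hookrightarrow \wedge^2 V_5$), so $\P(U_3)$ is the $\beta$-plane $\Pi = G(2,U_3)$; projecting from $\Pi$ gives a rational map $\kappa: W \dra \P(U_1 \oplus U_4)$, and I must show it is birational and identify the inverse.

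First I would parametrize the general point of $W$ as in the proof of the previous proposition: a general $x \in W \subset G(2,V_5)$ projects isomorphically to $U_2$, so the corresponding plane is the graph $\Gamma(g_x) = \{(u, g_x(u)) : u \in U_2\}$ of a linear map $g_x: U_2 \to U_3 = \Sym^2 U_2^\vee$, and the condition $x \in W$ (i.e. the Plücker point of $\Gamma(g_x)$ lies in $\P(V_8)$) forces $g_x$ to be completely symmetric, hence $g_x \in \Sym^3 U_2^\vee$, a general cubic form $\gamma_x$ on $U_2$. Next I would compute the Plücker coordinates of $\Gamma(g_x)$ in the decomposition $U_1 \oplus U_2 \oplus U_3 \oplus U_4$: writing $\wedge^2 \Gamma(g_x) = \wedge^2 U_2 \oplus (U_2 \wedge \operatorname{im} g_x) \oplus \wedge^2(\operatorname{im} g_x)$, the $U_1 = \wedge^2 U_2$ component is a fixed nonzero scalar (the ``dehomogenizing'' coordinate, since we restricted to the open locus projecting isomorphically to $U_2$), the $U_2$-component vanishes precisely because $g_x$ is symmetric (this is the reason the Plücker point lands in $V_8$), the $U_3$-component — which is what we kill by projecting from $\P(U_3)$ — depends linearly on $g_x$, and the $U_4$-component is a quadratic function of the coefficients of $\gamma_x$. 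So in the target $\P(U_1 \oplus U_4)$, after normalizing the $U_1$-coordinate to $1$, the image $\kappa(x)$ is recorded by a point of $U_4 \cong \Sym^3 U_2^\vee \otimes (\text{something})$; the key point is that this point is the image of $\gamma_x$ under the canonical $\SL(2)$-equivariant ``squaring'' map $\nu: \Sym^3 U_2^\vee \to U_4$ sending a binary cubic to the degree-$6$-part quadratic expression in its coefficients. Identifying $U_4$ correctly, $\nu$ is (up to scalar) the map whose image is the cone over the Veronese-type curve of perfect cubes — more precisely its inverse is given by a linear system of quadrics, and this is what will produce the rational normal cubic $\Gamma^0_3 \subset \P(U_4)$.

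Concretely: choosing a basis $U_2 = \langle e_0, e_1\rangle$, a general binary cubic is $\gamma = \sum_{i=0}^3 a_i \binom{3}{i} X^{3-i}Y^i$, the map $\gamma \mapsto x$ is a bijection between general cubics and general points of $W$ (a rational section of $\kappa$'s source), and I would check by a short direct computation that the $U_4$-coordinates of $\kappa(x)$ are, up to a common factor, the four quadratic quantities which are the Plücker coordinates of the $2$-plane $\operatorname{im} g_x = \langle \gamma \cdot X, \gamma \cdot Y\rangle \subset U_3$ inside $G(2,U_3) = \Pi$ — so $\kappa(x)$, together with the fixed $U_1$-coordinate, determines $[\operatorname{im} g_x] \in \Pi$, equivalently determines the pencil of lines through $x$ meeting $\Pi$, and one then recovers $g_x$ (hence $x$) uniquely. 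This proves $\kappa$ is birational. To get the description of $\kappa^{-1}$: the locus in $\P(U_4) \subset \P(U_1 \oplus U_4)$ where the fibre of $\kappa$ degenerates is exactly the image of the ``discriminant'' stratum; the minimal such is the locus of cubes $X^{3-i}Y^i \mapsto$ triple point, which traces out a rational normal cubic curve $\Gamma^0_3$ in $\P(U_4) \cong \P^3$, and the components of $\kappa^{-1}$ (each entry of $g_x$ recovered from $\kappa(x)$) are visibly quadratic in the $U_1 \oplus U_4$-coordinates and vanish on $\Gamma^0_3$; a dimension count shows they span the full $4$-dimensional linear system of quadrics through $\Gamma^0_3$, as claimed.

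The main obstacle I anticipate is the bookkeeping of the $\SL(2,\C)$-equivariant identifications: pinning down the precise isomorphism $U_3 = \Sym^2 U_2^\vee$ and $U_4 \cong \Sym^3 U_2^\vee$ (as a summand of $\wedge^2 V_5$, \emph{not} the naive one) so that the map ``$\gamma \mapsto$ Plücker coordinates of $\operatorname{im} g_\gamma$'' comes out as a clean equivariant quadratic map and its image/inverse is manifestly the rational normal cubic with its quadrics. Everything is forced by representation theory (there is a unique $\SL(2)$-equivariant quadratic map $\Sym^3 \to \Sym^4$ up to scalar, and its geometry is classical), so the content is really to choose coordinates making this transparent; once that is done the birationality and the explicit inverse fall out of a single matrix computation with the $2\times 2$ minors of the $2 \times 3$ matrix $\bigl[\begin{smallmatrix} a_0 & a_1 & a_2 \\ a_1 & a_2 & a_3\end{smallmatrix}\bigr]$.
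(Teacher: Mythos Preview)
Your overall strategy---parametrize a general point of $W$ as the graph of a completely symmetric $g_x\in\Sym^3U_2^\vee$, expand the Pl\"ucker coordinate in the $\SL_2$-decomposition, then project---is exactly the paper's.  But you have swapped the $U_3$- and $U_4$-components, and this throws off everything that follows.

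Concretely: with $e_0,e_1$ a basis of $U_2$, the Pl\"ucker point of $\Gamma(g_x)$ is
\[
(e_0+g_x(e_0))\wedge(e_1+g_x(e_1))
= e_0\wedge e_1 \;+\; \bigl(e_0\otimes g_x(e_1)-e_1\otimes g_x(e_0)\bigr) \;+\; g_x(e_0)\wedge g_x(e_1).
\]
The last term lies in $\wedge^2U_3$, which \emph{is} the $U_3$-summand of $\wedge^2V_5$ (this is how you correctly identified $\P(U_3)=\Pi$), and it is \emph{quadratic} in $g_x$, not linear.  The middle term lies in $U_2\otimes U_3\simeq U_2\oplus U_4$; its $U_2$-part vanishes by symmetry, and its $U_4$-part is \emph{linear} in $g_x$---in fact it \emph{is} $g_x$ under the identification $U_4\simeq\Sym^3U_2^\vee$.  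So projecting from $\P(U_3)$ kills the quadratic piece and keeps the linear one: on this chart $\kappa$ is, up to normalization, the identity $g_x\mapsto g_x$, not a ``squaring''.  What the inverse $\kappa^{-1}$ must supply is the missing $U_3$-coordinate $g_x(e_0)\wedge g_x(e_1)$, recovered from the $U_4$-coordinate $g_x$ via the natural quadratic map $\Sym^2U_4\to\wedge^2U_2\otimes\wedge^2U_3\simeq U_3$ (the paper writes this as $(u\otimes Q)(v\otimes P)\mapsto(u\wedge v)(P\wedge Q)$).  The quadrics in $|\cI_{\Gamma^0_3}(2)|$ are exactly the components of this map together with the tautological quadric pairing $U_1\otimes U_1$.

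Your later claim that ``the $U_4$-coordinates of $\kappa(x)$ are the four Pl\"ucker coordinates of $\operatorname{im}g_x$ in $G(2,U_3)$'' is a symptom of the same confusion: $G(2,U_3)\subset\P(\wedge^2U_3)=\P^2$ has only three Pl\"ucker coordinates, and these are precisely the (quadratic) $U_3$-component that gets projected away, not the $U_4$-component that survives.  Once you reverse the roles, the rest of your plan (and the $2\times3$ matrix of $a_i$'s) works, and coincides with the paper's argument.
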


\begin{proof}
A point in $G(2,V_5)$ corresponds to a tensor of the form 
$$(u+P)\wedge (v+Q)=u\wedge v+u\ot Q-v\ot P+P\wedge Q,$$
 with $u$ and $v$ in $ U_2$, and $P$ and $Q$ in $ U_3\isom \Sym^2U_2^\vee$. It belongs to  $W$ if and only if $Q(u,\cdot)=P(v,\cdot)$ as linear forms on $U_2$. 
We must prove that the projection sending this tensor to $u\wedge v+u\ot Q-v\ot P$ is generically
injective. This follows   from the fact there there is a natural map
$$\Sym^2U_4\hookrightarrow \Sym^2(U_2\ot U_3)\ra\wedge^2U_2\ot \wedge^2U_3\simeq U_3,$$
sending   $(u\ot Q)(v\ot P)$ to $(u\wedge v)(P\wedge Q)$,   from which we can get the  component $P\wedge Q$ back.

Moreover, this shows that the inverse map is defined by the space of quadrics orthogonal
to the kernel of this morphism in $\Sym^2(U_1\op U_4)^\vee$, and these are precisely the
quadrics containing  the rational normal
cubic curve $\Gamma^0_3$ image of $\P(U_2^\vee)$ in $ \P(U_4)=\P(\Sym^3U_2^\vee)$.
\end{proof}

The rational map $\kappa$ is not defined along the $2$-plane $\Pi$, whose total image is
  the hyperplane 
$\P (U_4)$. More precisely, through any point $x$ in $\Pi$, there are two (possibly equal) lines tangent to $c_U^\vee$, which meet this conic at (possibly equal)  points $x_1$ and $x_2$, and the total image of $x$ by $\kappa$ is the line $\ell_x$ bisecant (or tangent) to  $\Gamma^0_3$ through $x_1$ and $x_2$.


If a hyperplane $V_4\subset V_5$ does not contain $U_3$, the quadric surface $ Q_{W,V_4}$  (see (\ref{qw}))
 meets $\Pi $ at the unique
point   $[U_3\cap V_4]$, so the restriction of $\kappa$ to $Q_{W,V_4}$ sends it to a $2$-plane in $\P(U_1\op U_4)$.
If $V_4$ does contain $U_3$, that is,  if $V_4=V_1^{\perp}$ for some line $V_1\subset U_2$ defining
a point in $ c_U$,    the linear span of $Q_{W,V_4}$ is mapped to the tangent line to
the corresponding point of $\Gamma_3^0$.

\subsection{Cohomology calculations}\label{coca}
We gather here various results on the cohomology groups of some sheaves of twisted differentials on $W$.

\begin{prop}\label{cogr}We have
\begin{eqnarray}
H^3(W,\Omega^1_W(-3))&=&0\label{-3},\\
H^q(W,\Omega^1_W(-2))&\isom&\delta_{q,3}L\label{-2},\\
H^q(W,\Omega^2_W(-1))& \isom&\delta_{q,3}V_5/U_3.\label{-1}
\end{eqnarray}
\end{prop}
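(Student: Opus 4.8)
The plan is to compute each of these cohomology groups using the explicit description of $W$ as a smooth linear section $G(2,V_5)\cap\P_7$ together with the Koszul resolution of $\cO_W$ on the Grassmannian $G=G(2,V_5)$, and the conormal sequence relating $\Omega^1_W$ to $\Omega^1_G$. First I would note that the ideal sheaf of $W$ in $G$ is resolved by the Koszul complex $0\to\cO_G(-2)\to\cO_G(-1)^{\oplus 2}\to\cO_G\to\cO_W\to 0$ coming from the pencil of Plücker hyperplanes $\P(V_8^\perp)\subset\P(\wedge^2V_5^\vee)$; twisting by a sheaf $\cF$ on $G$ and taking hypercohomology reduces the computation of $H^\bullet(W,\cF|_W)$ to the cohomology of $\cF$, $\cF(-1)$, $\cF(-2)$ on $G$. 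These last groups are all accessible via Bott's theorem on $G(2,5)$, once the relevant bundles of differentials are decomposed into irreducible homogeneous bundles.

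The key steps, in order, would be: (1) write the normal bundle sequence $0\to N^\vee_{W/G}\to\Omega^1_G|_W\to\Omega^1_W\to 0$, with $N^\vee_{W/G}\isom\cO_W(-1)^{\oplus 2}$, and the analogous sequence for $\Omega^2$, namely $0\to\Omega^1_W\otimes N^\vee_{W/G}\to\Omega^2_G|_W\to\Omega^2_W\to 0$ (using that $\wedge^2 N^\vee\isom\cO_W(-2)$ contributes a further term, or more cleanly the filtration on $\Omega^2_G|_W$ with graded pieces $\Omega^2_W$, $\Omega^1_W(-1)^{\oplus 2}$, $\cO_W(-2)$). (2) For the statement (\ref{-3}), twist by $\cO_W(-3)$: I must show $H^3(W,\Omega^1_G|_W(-3))=0$ and control the connecting map from $H^2(W,\cO_W(-4)^{\oplus 2})$; both reduce via Koszul to Bott-vanishing statements on $G$ for $\Omega^1_G(-3)$, $\Omega^1_G(-4)$, $\Omega^1_G(-5)$ and for $\cO_G(-4),\cO_G(-5),\cO_G(-6)$, the latter being zero in the relevant degrees by Kodaira-type vanishing on $G$ (Fano of index $5$). (3) For (\ref{-2}) and (\ref{-1}), the same reduction produces candidate contributions; I expect the only surviving Bott summand to be, respectively, a copy of $L=V_8^\perp$ (naturally, since $L$ is exactly what cuts out $W$, appearing through the Koszul differential $H^3(W,\cO_W(-4)^{\oplus 2})\leftarrow$ or through $H^3(G,\Omega^1_G(-2))$) and a copy of $V_5/U_3$; I would pin down the identification by tracking the $\SL(V_5)$- or, after fixing $W$, the $\Aut(W)$-equivariant structure, using the description of $U_3$ from \S\ref{fourw} as the common kernel of the forms in $V_8^\perp$.

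The main obstacle I anticipate is bookkeeping rather than conceptual: correctly decomposing $\Omega^1_G$ and $\Omega^2_G$ on $G(2,V_5)$ into irreducible $\GL(V_5)$-homogeneous bundles, twisting by $\cO_G(-k)=(\wedge^2\cS_{2,V_5})^{\otimes k}$ or its dual, and applying Bott's algorithm to each summand in each of the three Koszul degrees — then checking that all the connecting maps in the two long exact sequences behave as expected (either vanish, or are isomorphisms onto the claimed target). In particular, for (\ref{-1}) one has to be careful that the term $H^\bullet(W,\cO_W(-3))$ coming from $\wedge^2 N^\vee$ does not contribute, and that the $V_5/U_3$ rather than $V_5$ or $U_3$ emerges — this is where the specific geometry of $W$ (not just of a general $G(2,5)\cap\P_7$, but the degeneracy making $U_3$ canonical) must enter, presumably through the identification of the relevant Bott summand $V_5$ with an extension involving $U_3$. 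I would also double-check the degrees of freedom against $\dim\Aut(W)=8$ and the known deformation theory of the pair $(W,X)$ as a sanity check on (\ref{-1}), since $H^1(W,\Omega^2_W(-1))$ or its Serre dual typically governs part of the obstruction/deformation calculation used later in the paper.
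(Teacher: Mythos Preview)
Your proposal is correct and follows essentially the same approach as the paper: Koszul resolution of $\cO_W$ on $G=G(2,V_5)$, the conormal sequence $0\to L\otimes\cO_W(-1)\to\Omega^1_G|_W\to\Omega^1_W\to 0$ (and its $\wedge^2$-analogue), and Bott--Borel--Weil on $G$. The two nontrivial connecting maps you anticipate having to ``control'' are, in the paper, the explicit checks that the natural map $\mathfrak{sl}(V_5^\vee)\to\Hom(L,\wedge^2V_5^\vee/L)$ is surjective (giving (\ref{-3})) and that the wedge-product map $\Sym^2L\to\wedge^4V_5^\vee\simeq V_5$ is injective with image exactly $U_3$ (giving (\ref{-1})).
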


\begin{proof}
Set $G=G(2,V_5)$. By the Bott-Borel-Weil theorem, the sheaves $\Omega^1_G(-r)$ for $1\le r\le 4$, and $\Omega^2_G(-r)$ for $1\le r\le 2$,  are acyclic, whereas
$$H^q(G, \Omega^2_G(-3))=\delta_{q,5}V_5.
$$
Using the Koszul resolution
$$0\to\wedge^2L\otimes\cO_G(-2)\to L\otimes\cO_G(-1)\to\cO_G\to\cO_W\to 0,$$
  we obtain that $\Omega^1_G(-2)\vert_W$ is acyclic, and
    \begin{equation}\label{fz11}
    H^q(W,\Omega^2_G(-1)\vert_W)\simeq H^{q+2}(G,\Omega^2_G(-3))\otimes \wedge^2L \isom \delta_{q,3}\wedge^4V_5^\vee.
    \end{equation}
Using the conormal sequence
  \begin{equation}\label{conor}
0\to L\ot\cO_W(-1)\to \Omega^1_G\vert_W\to \Omega^1_W\to 0, 
\end{equation}
we obtain 
$$H^q(W,\Omega^1_W(-2))\simeq H^{q+1}(W,\cO_W(-3))\otimes L  $$
and (\ref{-2}) follows. 

The vanishing of $H^3(W,\Omega^1_W(-3))$ is a little less straightforward. 
From  (\ref{conor}), we deduce an exact sequence
$$0\to H^3(W,\Omega^1_W(-3))\to L\ot H^4(W,\cO_W(-4))\to 
H^4(W,\Omega^1_G(-3)\vert_W).$$
The vector space $H^4(W,\cO_W(-4))$ is Serre-dual to $H^0(W,\cO_W(1))=
\wedge^2V_5^\vee/L$. 
The acyclicity of $\Omega^1_G(-r)$ for $r\in\{3,4\}$ yields 
$$H^4(W,\Omega^1_G(-3)\vert_W)\simeq H^6(G,\Omega^1_G(-5)),$$ 
which is Serre-dual to $H^0(G,T_G)\simeq {\mathfrak sl}(V_5)$.
By duality, we are therefore reduced to verifying that the natural map
$$ {\mathfrak sl}(V_5^\vee)\to \Hom(L,\wedge^2V_5^\vee/L)$$
is surjective. This is checked by an explicit computation, and   proves (\ref{-3}).

 The exact sequence (\ref{conor}) also induces exact sequences
\begin{eqnarray}
 &0\to \cK\to\Omega^2_G\vert_W\to\Omega^2_W\to 0, \label{fz1}\\
 &0\to \wedge^2L\otimes\cO_W(-2)\to \cK\to 
L\otimes\Omega^1_W(-1)\to 0.\label{fz2}
\end{eqnarray}
From   (\ref{-2}) and (\ref{fz2}), it follows that the cohomology groups of $\cK(-1)$ are  
$0$ in degrees $0$, $1$, and $2$, and there is an exact sequence
$$0\to H^3(W,\cK(-1))\to L\otimes L\to \wedge^2L\to H^4(W,\cK(-1))\to 0.$$
Therefore $H^q(W,\cK(-1))\isom \delta_{q,3} \Sym^2L$. With (\ref{fz1}) and  (\ref{fz11}), we obtain  that $\Omega^2_W(-1)$ has zero cohomology in degree 
$0 $, $1$, and $4$, and that there is an exact sequence 
$$0\to H^2(W,\Omega^2_W(-1))\to \Sym^2L\stackrel{\psi}{\to} \wedge^4V_5^\vee\to 
H^3(W,\Omega^2_W(-1))\to 0.$$
The   map $\psi  $ is the obtained from   the
inclusion $L\hookrightarrow\wedge^2V_5^\vee$ and the natural map 
$\Sym^2(\wedge^2V_5^\vee)\to\wedge^4V_5^\vee$ defined by the wedge product. 
One   checks that the noninjectivity of $\psi$   would imply
the existence of a rank-$2$ form in $L$, which is not the case. Also,
if we identify $\wedge^4V_5^\vee$ with $V_5$, the image of $\psi$ is 
the subspace we denoted by  $U_3$. We have therefore proved (\ref{-1}).
\end{proof}

\section{The Fano threefold $X$}\label{fanox}

 \subsection{The two types of Fano threefolds of type $\cX_{10}$}\label{gush}
 
  Let $X$ be a smooth projective  complex threefold with Picard group $ \Z[K_X]$ and $(-K_X)^3=10$. The linear system $|-K_X|$ is   very ample and embeds $X$ in $\P^7$ as a smooth subvariety of degree $10$. Gushel and Mukai proved that it is of one of the following types described below (\cite{gu}, \S 4; \cite{ip}, Corollary 4.1.13 and Theorem 5.1.1).
 
  Let $V_5$ be a $5$-dimensional   vector space. Consider the Grassmannian $G(2,V_5) \subset \P(\wedge^2 V_5)$ in its Pl\"ucker embedding and denote by $\P_m$ a linear subspace of $\P(\wedge^2 V_5)$ of dimension $m$. Then: 
 \begin{itemize}
 \item either 
 \begin{equation}\label{defX}
X =G(2,V_5)\cap \P_7 \cap \Omega =W\cap \Omega \subset \P_7,
\end{equation}
 where  $\Omega$ is a quadric;
 \item or $X$ is the intersection in $\P_7$ of a cone over  $ G(2,V_5)\cap \P_6$ with a quadric.
  \end{itemize}
The second case (``Gushel threefolds'') is a degeneration of the first case.\footnote{Intersect, in $\P^{10}$, a cone over $ G(2,V_5)$  with a quadric and a $\P_7$; if the $\P_7$ does not pass through the   vertex of the cone, we are in the first case; if it does, we are in the second case. Gushel threefolds are missing in   Iskovskikh's 1977 classification  of Fano threefolds with Picard number $ 1$ (see   tables    in \cite{Is11}, p. 505).} Gushel threefolds were studied in \cite{gus}. 

 We will only consider threefolds of the first type, which we denote by $\cX_{10}^0$. They depend on $22$ parameters (see \S\ref{modu}), whereas Gushel threefolds depend on only $19$ parameters.

\subsection{The moduli space $\cM_X(2;1,4)$} 
Let us recall how the embedding of  $X$ into $ G(2,V_5)$ was obtained by Gushel and Mukai. There exists on $X$ a smooth elliptic quartic curve $\Gamma^1_4$ whose linear span is $3$-dimensional (\cite{ip}, Lemma 5.1.2). 
 Serre's construction yields a rank-$2$ vector bundle $\cE$ on $X$  with a section 
vanishing exactly on $\Gamma^1_4$  fitting into an extension
$$0\to\cO_X\to \cE\to\cI_{\Gamma^1_4}(1)\to 0.$$
We have $c_1(\cE)=[\cO_X(1)]$ and $c_2(\cE)=[\Gamma^1_4]$. Moreover, $h^0(X,\cE)=5$, 
$H^i(X,\cE)=0$ for $i>0$, and $H^i(X,\cE^\vee)=0$ for $i\ge 0$. In particular 
$H^0(X,\cE^\vee)= H^0(X,\cE(-1))=0$, so $\cE$ is stable since $\Pic(X)=\Z[\cO_X(1)]$.
By \cite{ip}, Lemma 5.1.3, $\cE$ is globally generated and defines a morphism 
of $X$ into $G(2,H^0(X,\cE)^\vee)=G(2,V_5)$ such that $\cE$ is the restriction to $X$
of the dual tautological rank-$2$ bundle $\cS_{2,V_5}^\vee$ on $G(2,V_5)$.\footnote{This morphism is an embedding, except for Gushel threefolds (\S\ref{gush}), for which it induces a double cover of a del Pezzo threefold $Y_5\subset \P^6$ (\cite{ip}, \S5.1).}

\begin{prop}Let $X$ be any smooth Fano threefold   of type $\cX_{10}$.
The vector bundle $\cE$ is the unique stable rank-$2$ vector bundle on $X$
with Chern numbers $c_1(\cE)=1$ and $c_2(\cE)=4$.
\end{prop}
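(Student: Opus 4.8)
The plan is to prove existence and uniqueness separately. Existence was already established by the Serre construction recalled above: the bundle $\cE$ sits in the extension $0\to\cO_X\to\cE\to\cI_{\Gamma^1_4}(1)\to 0$, has $c_1(\cE)=1$, $c_2(\cE)=4$, and is stable because $\Pic(X)=\Z[\cO_X(1)]$ and $H^0(X,\cE(-1))=H^0(X,\cE^\vee)=0$. So the whole content is uniqueness: if $\cF$ is any stable rank-$2$ vector bundle on $X$ with $c_1(\cF)=1$ and $c_2(\cF)=4$, then $\cF\isom\cE$.

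First I would normalize and compute the Euler characteristic. By Riemann--Roch on the Fano threefold $X$ (using $(-K_X)^3=10$, $-K_X=\cO_X(1)$, and the standard Todd-class computation), one gets $\chi(X,\cF)=5$ for a rank-$2$ bundle with $(c_1,c_2,c_3)=(1,4,0)$; note $c_3$ is forced to be $0$ since $\cF$ is locally free. The next step is vanishing of the higher cohomology: stability of $\cF$ gives $H^0(X,\cF(-1))=H^0(X,\cF^\vee)=0$ (as $\cF^\vee\isom\cF(-1)$ because $c_1(\cF)=1=-K_X$ on a variety with $\Pic=\Z[\cO_X(1)]$), hence by Serre duality $H^3(X,\cF)=H^0(X,\cF^\vee\otimes\omega_X)^\vee=H^0(X,\cF(-2))^\vee=0$. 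The remaining point is $H^2(X,\cF)=0$, which by Serre duality equals $H^1(X,\cF(-1))^\vee=H^1(X,\cF^\vee)^\vee$; here one invokes stability again together with the fact that a stable bundle with $c_1\cdot(-K_X)>0$ and small discriminant has no intermediate cohomology — concretely, $\Ext^1(\cF,\cO_X)=H^1(X,\cF^\vee)$ is controlled by the Bogomolov-type inequality $c_1^2\le 4c_2$ (here $10\le 40$), so $\cF$ is a Mukai bundle and the relevant vanishing holds (this is where I would cite Mukai's results on the existence/rigidity of such bundles, or reprove it by hand using the restriction theorem to a general $K3$ surface $S\in|-K_X|$, on which $\cF|_S$ is a rigid stable bundle with $\chi=4$). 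Hence $h^0(X,\cF)=\chi(X,\cF)=5$.

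With $h^0(X,\cF)=5$ in hand, a general section $s\in H^0(X,\cF)$ vanishes in codimension $2$ (its zero locus cannot be a divisor since $H^0(X,\cF(-1))=0$) along a curve $C\subset X$ with $[C]=c_2(\cF)$, so $\deg C=4$ and, from $0\to\cO_X\to\cF\to\cI_C(1)\to 0$ together with $\omega_C=(\omega_X\otimes\det\cF)|_C=\cO_C$, the curve $C$ is a locally complete intersection curve of arithmetic genus $1$ and degree $4$ in $\P^7$; since $h^0(\cI_C(1))=h^0(\cF(-1))=0$ it is nondegenerate-free, i.e.\ $\langle C\rangle$ is a $\P^3$, so $C$ is (the limit of) an elliptic quartic curve of the type used in Gushel--Mukai's construction. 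One then runs the construction backwards: $\cF$ is globally generated (again by the $K3$ restriction plus the vanishing, or directly as in \cite{ip}, Lemma 5.1.3), it defines a morphism $X\to G(2,H^0(X,\cF)^\vee)$, and by the rigidity of the Gushel--Mukai embedding (the fact that the embedding $X\hookrightarrow G(2,V_5)$ is canonical once one knows such a bundle exists) this morphism agrees, up to the choice of isomorphism $H^0(X,\cF)^\vee\isom V_5$, with the Plücker-derived morphism, whence $\cF\isom\cS_{2,V_5}^\vee|_X=\cE$.

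I expect the main obstacle to be the vanishing $H^2(X,\cF)=0$ (equivalently $\Ext^1(\cF,\cO_X)=0$), since the other steps are Riemann--Roch bookkeeping and Serre duality. The clean way to handle it is the restriction-to-$K3$ argument: choose a smooth $S\in|-K_X|$ (which exists and is a $K3$ of genus $6$), show $\cF|_S$ is stable by a Mehta--Ramanathan / Bogomolov restriction theorem, compute that $\cF|_S$ is a spherical (rigid) bundle on $S$ with $\chi(\cF|_S)=4$, use that any two rigid stable bundles with the same Mukai vector on a $K3$ coincide, and then lift the resulting vanishing and the identification back to $X$ via the exact sequence $0\to\cF(-1)\to\cF\to\cF|_S\to 0$. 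An alternative, perhaps cleaner for a self-contained argument, is to quote directly Mukai's theorem that on a prime Fano threefold of genus $6$ the moduli space of such stable bundles is a single reduced point; I would phrase the proof so that it reduces to exactly that input.
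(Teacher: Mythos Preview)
Your main line of argument has a genuine gap at the final step. You construct from $\cF$ a morphism $X\to G(2,H^0(X,\cF)^\vee)$ and then invoke ``rigidity of the Gushel--Mukai embedding'' to conclude this agrees with the one coming from $\cE$. But that rigidity statement \emph{is} the proposition: uniqueness of the embedding $X\hookrightarrow G(2,V_5)$ up to $\PGL(V_5)$ is equivalent to uniqueness of the pulled-back tautological bundle. Nothing you have established so far rules out that $X$ sits in two Grassmannians $G(2,V_5)$ and $G(2,V'_5)$ in genuinely different ways, with the same anticanonical $\P^7$ but different ambient $\P^9$'s. Your proposed alternative, ``quote directly Mukai's theorem that \dots\ the moduli space \dots\ is a single reduced point'', is likewise the statement to be proved, not an input. (A smaller slip: from $0\to\cO_X\to\cF\to\cI_C(1)\to 0$ you get $h^0(\cI_C(1))=h^0(\cF)-1=4$, not $0$; this is what shows $\langle C\rangle=\P^3$.)

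Your parenthetical sketch --- restrict to a general K3 section $S$, use that $\cF\vert_S$ is a rigid stable bundle with the same Mukai vector as $\cE\vert_S$, hence $\cE\vert_S\isom\cF\vert_S$, then lift --- is the right idea and is exactly what the paper does. The substance you are missing is the lift. The paper sets $\cH=\cH om(\cE,\cF)$, computes $\chi(S,\cH\vert_S)=2$ directly (bypassing your detour through $h^0(\cF)=5$ and global generation), deduces $\cE\vert_S\isom\cF\vert_S$ from stability, and then shows that any such isomorphism extends to $X$ by proving $H^1(X,\cH(-1))=0$. This last vanishing is obtained by descending induction, reducing to $H^1(S,\cH\vert_S(-k))=0$ for all $k>0$; since $\cH\vert_S\isom\cE nd(\cS_{2,V_5}^\vee)\vert_S$ and $S$ is a complete intersection in $G(2,V_5)$, this becomes a Bott--Borel--Weil computation. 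That step, not the vanishing of $H^2(X,\cF)$, is where the real work lies.
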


\begin{proof} We follow \cite{muk}.
Let $\cF$ be a vector bundle on $X$ with the same properties as $\cE$. 
Let $\cH=\cH om(\cE,\cF)$. A general hyperplane section $S$ of $X$ is a $K3$ surface, and again $\Pic(S)=\Z[\cO_S(1)]$.  

The restriction $\cH\vert_S$  has Chern numbers $c_1(\cH\vert_S)=0$ and 
$c_2(\cH\vert_S)=4c_2(\cE\vert_S)-c_1(\cE\vert_S)^2=6$. So by the  Riemann-Roch theorem,
$$\chi(S,\cH\vert_S)=4\chi(\cO_S)+\frac{1}{2}(c_1(\cH\vert_S)^2-2c_2(\cH\vert_S))=2.$$ 
In particular $h^0(S,\cH om(\cE\vert_S,\cF\vert_S))$
or $h^0(S,\cH om(\cF\vert_S,\cE\vert_S))$ must be  nonzero. 
Since $\cE\vert_S$ and $\cF\vert_S$ are stable bundles 
with the same slope, this implies that they are isomorphic. 

We want to prove that any isomorphism between $\cE\vert_S$ and $\cF\vert_S$ extends to $X$, 
by showing   $H^1(X,\cH(-1))=0$. We   prove by descending induction on $k$ that $H^1(X,\cH(-k))$ vanishes
for all $k>0$. Because of the exact sequence $0\to\cO_X(-1)\to\cO_X\to\cO_S\to 0$, 
we just need to check that $H^1(S,\cH\vert_S(-k))$ vanishes for all $k>0$. 

Now, since $\cE$ and $\cF$ are both isomorphic on $S$ to the restriction of 
 $\cS_{2,V_5}^\vee$, we have $\cH\vert_S=\cE nd(\cS_{2,V_5}^\vee)\vert_S$. 
Since $S$ is a codimension-$4$ subscheme of $G(2,V_5)$ defined by the vanishing of a section of the 
vector bundle $\cN=\cO_{G(2,V_5)}(1)^3\oplus\cO_{G(2,V_5)}(2)$, we just need to check
that  $H^{i+1}(G(2,V_5), \cE nd(\cS_{2,V_5}^\vee)\otimes\wedge^i\cN^\vee)$ vanishes for $0\le i\le 4$. This is an 
easy consequence of the Bott-Borel-Weil theorem. 
 \end{proof}

\begin{coro}
Any isomorphism between two   Fano threefolds of type $\cX_{10}^0$  is induced by an automorphism of $\P(V_5)$ 
preserving $W$. 
\end{coro}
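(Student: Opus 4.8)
The plan is to deduce the corollary directly from the uniqueness proposition that immediately precedes it. Let $X$ and $X'$ be two Fano threefolds of type $\cX_{10}^0$, and suppose $f\colon X\isomto X'$ is an isomorphism. Since $f$ is an isomorphism of polarized varieties (there is only one polarization up to scalar, as $\Pic(X)=\Z[\cO_X(1)]$ and $-K_X=\cO_X(1)$), it carries $\cO_{X'}(1)$ to $\cO_X(1)$. Now $f^*\cE'$ is a stable rank-$2$ vector bundle on $X$ with Chern numbers $c_1=1$, $c_2=4$ (Chern numbers are preserved under pullback by an isomorphism, and stability is preserved because $f^*$ preserves slopes when it preserves the polarization). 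By the Proposition, $f^*\cE'\isom \cE$. Hence $f$ induces an isomorphism $H^0(X',\cE')^\vee\isom H^0(X,\cE)^\vee$, i.e.\ $V_5'\isom V_5$, compatible with the Gushel--Mukai morphisms $X\to G(2,V_5)$ and $X'\to G(2,V_5')$; in other words, the square relating $f$, the two embeddings, and the induced linear isomorphism $V_5\isom V_5'$ commutes.

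Next I would translate this into a statement about $W$. The Gushel--Mukai construction embeds $X$ (of type $\cX_{10}^0$) into $G(2,V_5)$ with image $G(2,V_5)\cap\P_7\cap\Omega = W\cap\Omega$ for the appropriate $\P_7$, hence in particular $X\subset W\subset G(2,V_5)$. The linear isomorphism $g\colon V_5\isomto V_5'$ induced above gives $\wedge^2 g\colon \wedge^2 V_5\isomto \wedge^2 V_5'$, inducing an automorphism of $\P_9$ carrying $G(2,V_5)$ to $G(2,V_5')$ and, by compatibility with $f$, carrying $X$ to $X'$. It remains to see that it carries $W$ to $W'$, i.e.\ that it carries the linear span $\P_7=\P(V_8)$ to $\P_7'=\P(V_8')$. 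This follows because $V_8$ (resp.\ $V_8'$) is intrinsically the linear span of $X$ in $\P(\wedge^2 V_5)$ (resp.\ $\P(\wedge^2 V_5')$): indeed $X$ spans $\P_7$ since $X$ is nondegenerate in its $-K_X$-embedding and this embedding is, via the Plücker coordinates, the restriction of the Plücker embedding of $G(2,V_5)$ to the span $\P_7$. So $\P(\wedge^2 g)$ maps $\langle X\rangle=\P_7$ isomorphically onto $\langle X'\rangle=\P_7'$, hence maps $W=G(2,V_5)\cap\P_7$ onto $W'=G(2,V_5')\cap\P_7'$. Thus $f$ is induced by the automorphism $\P(g)$ of $\P(V_5)$, and $\P(g)$ maps $W$ to $W'$; taking $X=X'$ gives the statement as phrased (an automorphism of $\P(V_5)$ preserving $W$), and the same argument with $X\ne X'$ gives the evident ``isomorphism'' version.

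The one subtlety I would be careful about is the very first reduction: showing that an abstract isomorphism $f\colon X\isomto X'$ respects the polarizations and therefore that $f^*$ preserves slopes, so that $f^*\cE'$ is again \emph{stable} with the right Chern numbers. This is immediate here because $\Pic(X)\isom\Z$ is generated by an ample class and $-K_X$ is the (unique up to nothing) ample generator, so any isomorphism sends ample generator to ample generator; hence $c_1(f^*\cE')=f^*c_1(\cE')=f^*\cO_{X'}(1)=\cO_X(1)$, and degrees of subsheaves are computed against the same generator on both sides. After that, every step is an application of results already in the excerpt (the uniqueness Proposition, and the Gushel--Mukai description of $\cX_{10}^0$), so I do not expect a genuine obstacle; the main point to get right is simply the intrinsic characterization of $V_8$ as the linear span of $X$, which makes the isomorphism $\wedge^2 g$ automatically compatible with the inclusions $X\subset W$ and $X'\subset W'$.
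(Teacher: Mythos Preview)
Your argument is correct and is exactly the deduction the paper intends: the Corollary is stated without proof immediately after the uniqueness Proposition, and your reasoning (pull back $\cE'$ via $f$, apply uniqueness to get $f^*\cE'\isom\cE$, hence a linear isomorphism $V_5\isomto V_5'$ carrying $X$ to $X'$ inside the Grassmannians, then recover $\P_7$ as the span of $X$) is the standard and intended one. There is nothing to add.
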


\subsection{Automorphisms} 

\begin{theo}\label{autofini}
For any smooth Fano threefold $X $ of type $\cX^0_{10}$, we have
$$H^i(X,T_X)=0 \qquad \hbox{\it for}\quad i\ne 1.$$
In particular, the group of automorphisms of $X$ is finite.
\end{theo}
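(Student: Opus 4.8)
The plan is to show $H^0(X,T_X) = H^2(X,T_X) = H^3(X,T_X) = 0$; the finiteness of $\Aut(X)$ follows at once from $H^0(X,T_X)=0$. The strategy is to compare $T_X$ with the restriction of $T_W$ to $X$, using that $X$ is an anticanonical divisor — indeed a quadric section — in the smooth fourfold $W$, together with the cohomology computations of Proposition \ref{cogr}.

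First I would record the relevant normal bundle data. Since $W$ has degree $5$ in $\P_9$ and $X = W\cap\Omega$ with $\Omega$ a quadric, we have $X\in|\cO_W(2)|$, and by adjunction $\cO_X(2) \isom \cO_W(K_W^{-1})\vert_X$, so that $K_W = \cO_W(-3)$ (consistent with $-K_X$ having degree $10$). The normal bundle of $X$ in $W$ is $\cN_{X/W} = \cO_X(2)$. Then the two basic exact sequences are the restriction-of-tangent sequence
\begin{equation}\label{eq:tws}
0\to T_W(-2)\to T_W\to T_W\vert_X\to 0
\end{equation}
on $W$ (twisting $0\to\cO_W(-2)\to\cO_W\to\cO_X\to 0$ by $T_W$), and the tangent-normal sequence on $X$,
\begin{equation}\label{eq:tnx}
0\to T_X\to T_W\vert_X\to \cO_X(2)\to 0.
\end{equation}
Dualizing and twisting, $T_W(-2) \isom \Omega^3_W\ot\cO_W(K_W^{-1})\ot\cO_W(-2) = \Omega^3_W(1)$, and $\Omega^3_W \isom \Omega^1_W\ot\cO_W(K_W) = \Omega^1_W(-3)$, so $T_W(-2)\isom \Omega^1_W(-2)$. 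Likewise $T_W \isom \Omega^3_W(3) \isom \Omega^1_W$; more usefully, I will want $H^q(W,T_W)$ and $H^q(W,T_W(-2))$. The latter is $H^q(W,\Omega^1_W(-2))$, which by \eqref{-2} is $L$ for $q=3$ and $0$ otherwise. For $H^q(W,T_W)$: from the conormal sequence \eqref{conor} and the Euler sequence on $G=G(2,V_5)$ one computes $H^0(W,T_W)$ (the Lie algebra of $\Aut(W)$, which by \eqref{autw} is $8$-dimensional) and, more importantly, the vanishing of $H^q(W,T_W)$ for $q\ge 2$ — this follows from Bott vanishing for $T_G$ and $\cO_G(1)$, $\cO_G(2)$ on the Grassmannian via the Koszul complex, exactly as in the proof of Proposition \ref{cogr}.

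With these in hand the argument is short. From \eqref{eq:tws} and the vanishing of $H^q(W,T_W(-2))$ for $q\le 2$ one gets $H^q(W,T_W\vert_X)\isom H^q(W,T_W)$ for $q\le 1$ and an exact piece
\[
0\to H^2(W,T_W\vert_X)\to H^3(W,T_W(-2))\to H^3(W,T_W)=0,
\]
so $H^0(T_W\vert_X)=H^0(W,T_W)$, $H^1(T_W\vert_X)=H^1(W,T_W)$, $H^2(T_W\vert_X)\isom L$, and $H^3(T_W\vert_X)=0$ (using also $H^4(W,T_W(-2))=0$). Next feed this into \eqref{eq:tnx}. We need $H^q(X,\cO_X(2))$: since $\cO_X(2)=\cO_X(-K_X)$ has the higher cohomology of an anticanonical (Fano) divisor, $H^q(X,-K_X)=0$ for $q>0$ by Kodaira vanishing, while $H^0(X,-K_X)$ is the $8$-dimensional Plücker space. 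Then from \eqref{eq:tnx}: for $q=2$, $0=H^2(\cO_X(2)) \to H^3(X,T_X) \to H^3(T_W\vert_X)=0$ — wait, I must instead read $H^2(T_X)\hookrightarrow H^2(T_W\vert_X)\isom L$ and $H^3(T_X)\to H^3(T_W\vert_X)=0$, giving $H^3(X,T_X)=0$ immediately. To kill $H^0(T_X)$ and $H^2(T_X)$ one uses the connecting map $H^0(X,\cO_X(2))\to H^1(X,T_X)$ and the map $H^1(T_W\vert_X)=H^1(W,T_W)\to H^1(X,\cO_X(2))=0$: the latter vanishing shows $H^1(X,T_X)\twoheadleftarrow H^1(W,T_W)$... the cleanest route: $H^0(X,T_X)\hookrightarrow H^0(T_W\vert_X)=H^0(W,T_W)$, the Lie algebra of $\Aut(W)$, and one checks directly that no nonzero global vector field on $W$ is tangent to the general quadric section $X$ — equivalently, the composite $H^0(W,T_W)\to H^0(X,\cO_X(2))$ (derivative of the defining equation of $\Omega$) is injective for our $X$, since $\Aut(W)$ does not preserve $\Omega$. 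Finally $H^2(X,T_X)$ sits in $0\to H^1(\cO_X(2))=0 \to H^2(T_X)\to H^2(T_W\vert_X)\isom L$, and the map $H^2(T_X)\to L$ is the connecting map in \eqref{eq:tws} restricted appropriately; showing it is zero, equivalently that $H^2(T_X)=0$, can be done by noting $\chi(X,T_X)=0$ is false in general — instead one concludes $h^2(T_X)$ via the earlier identifications and the fact that $H^2(T_W\vert_X)\to H^2(X,\cO_X(2))=0$, so $H^2(T_X)\isom H^2(T_W\vert_X)\isom L$ would be too big; the resolution is that the image of $H^1(X,\cO_X(2))=0$ forces $H^2(T_X)\hookrightarrow L$, and then one shows this injection is in fact zero by identifying it with the obstruction map detecting whether first-order deformations of $X$ inside $\P_9$ stay in the Grassmannian-type geometry.

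The main obstacle, and the step requiring genuine work rather than bookkeeping, is the vanishing of $H^2(X,T_X)$: all the other groups fall out of \eqref{-2}, \eqref{-3}, \eqref{-1} and Bott vanishing on the Grassmannian, but pinning down $H^2(X,T_X)$ means showing the map $H^2(X,T_X)\to H^2(W,T_W\vert_X)\isom L$ (or the relevant connecting homomorphism into $H^3(W,T_W(-2))\isom L$) vanishes — I expect this to be the explicit linear-algebra computation, analogous to the surjectivity check at the end of the proof of Proposition \ref{cogr}, that uses the fact that the skew forms in $V_8^\bot$ all have maximal rank. Once $H^2(T_X)=0$ and $H^3(T_X)=0$ and $H^0(T_X)=0$ are established, the finiteness of $\Aut(X)$ is automatic since $\Aut(X)$ is an algebraic group with Lie algebra $H^0(X,T_X)=0$.
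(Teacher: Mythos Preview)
Your approach has several genuine gaps and misses the key simplification the paper uses.

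\textbf{The $i\ge 2$ case.} You never observe that on a threefold $T_X\simeq\Omega^2_X\otimes K_X^{-1}=\Omega^2_X(1)$. Once you do, Kodaira--Akizuki--Nakano vanishing kills $H^q(X,\Omega^2_X(1))$ for $q\ge 2$ in one line. This is exactly what the paper does, and it makes your entire struggle with $H^2(X,T_X)$ --- which you yourself flag as the ``main obstacle'' and leave unresolved --- disappear. Your sequence-chasing for $H^2$ is not only incomplete but unnecessary.

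\textbf{An algebraic slip.} Your identification $T_W(-2)\simeq\Omega^1_W(-2)$ is wrong: on a fourfold one has $\Omega^3_W\simeq T_W\otimes K_W$, not $\Omega^1_W\otimes K_W$. So $T_W(-2)\simeq\Omega^3_W(1)$, and the cohomology you would need is $H^q(W,\Omega^3_W(1))$, which is Serre-dual to $H^{4-q}(W,\Omega^1_W(-1))$ --- not to the $\Omega^1_W(-2)$ groups from \eqref{-2}. This invalidates the numerical input to your diagram chase.

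\textbf{The $i=0$ case is a genuine gap.} Your argument that $H^0(X,T_X)=0$ amounts to: the $8$-dimensional Lie algebra of $\Aut(W)$ injects into $H^0(X,\cO_X(2))$ because ``$\Aut(W)$ does not preserve $\Omega$''. But the theorem is stated for \emph{every} smooth $X$ of type $\cX_{10}^0$, not a general one, and you give no argument that the identity component of the stabilizer of $X$ in $\Aut(W)$ is trivial for \emph{all} such $X$. The paper avoids this entirely: it uses the sequence $0\to\Omega^1_X(-1)\to\Omega^2_W(1)\vert_X\to T_X\to 0$ (from the conormal sequence, after taking $\wedge^2$ and twisting), applies KAN to get $H^0(X,T_X)\simeq H^0(X,\Omega^2_W(1)\vert_X)$, then restricts from $W$ and uses Serre duality to reduce to $H^4(W,\Omega^2_W(-1))=0$, which is part of \eqref{-1}. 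This works uniformly for all smooth $X$.

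In short: rewrite around the isomorphism $T_X\simeq\Omega^2_X(1)$ and KAN vanishing; your tangent--normal--bundle route is longer, contains a computational error, and leaves both $H^0$ and $H^2$ genuinely unproved in the stated generality.
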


\begin{proof}
For $i\ge 2$, this follows from the Kodaira-Akizuki-Nakano (KAN) vanishing 
theorem since $T_X\simeq\Omega_X^2(1)$. The conormal exact sequence
\begin{equation}\label{conorWX}
0\to\cO_X(-2)\to\Omega^1_W\vert_X\to\Omega^1_X\to 0 
\end{equation}
  induces a resolution  of $\Omega^2_X$, hence of 
$T_X\simeq\Omega^2_X(1)$:
\begin{equation}\label{resol}
0\to\Omega^1_X(-1)\to\Omega^2_W(1)\vert_X\to T_X\to 0.
\end{equation}
By KAN vanishing 
  again, we deduce  $H^0(X,\Omega^2_W(1)\vert_X)\simeq H^0(X,T_X)$.  Since $X$ is a quadratic section of $W$, there is an exact sequence
\begin{eqnarray*}
 0\rightarrow \Omega_{W}^2(-1)\rightarrow \Omega_{W}^2(1)
\rightarrow  \Omega_W^2(1)\vert_X\rightarrow 0.
\end{eqnarray*}
Using   KAN vanishing 
  again (on $W$), we obtain an isomorphism\break $H^0(W,\Omega^2_W(1) )\simeq H^0(X,\Omega^2_W(1)\vert_X) $. Since $ \Omega^2_W(1)$ is Serre-dual to $\Omega^2_W( -1)$, the theorem follows from (\ref{-1}).
  \end{proof}

\begin{theo}\label{noauto}
A general   Fano threefold  of type $\cX_{10} $ has no nontrivial  automorphisms.  
\end{theo}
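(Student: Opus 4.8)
The plan is to combine a count of parameters (moduli) with the rigidity statement just proved. By Theorem~\ref{autofini}, every $X$ of type $\cX^0_{10}$ is rigid ($H^i(X,T_X)=0$ for $i\neq 1$), so the family $\cX^0_{10}$ is smooth of dimension $h^1(X,T_X)$; on the other hand, $\cX^0_{10}$ is constructed as an open subset of a quotient of the parameter space $\{(\P_7,\Omega)\}$ by the group $\PGL(V_5)$, and by the previous corollary two threefolds of type $\cX^0_{10}$ are isomorphic precisely when they lie in the same $\PGL(V_5)$-orbit. Concretely, the choices are: a codimension-$2$ linear subspace $\P_7=\P(V_8)\subset\P(\wedge^2V_5)$ with the maximal-rank condition on the dual pencil $\P(V_8^\bot)$ (an open condition), and a quadric $\Omega\in\P(H^0(\P_7,\cO(2)))$ modulo the quadrics vanishing on $W$; and we mod out by $\PGL(V_5)$, which acts with finite stabilizers (Theorem~\ref{autofini} again, or rather the finiteness of $\Aut(X)$), except that we must be a little careful since $\PGL(V_5)$ does \emph{not} act with finite stabilizers on the space of $\P_7$'s — its stabilizer there is $\Aut(W)$, of dimension $8$ by~(\ref{autw}). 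So the correct count is
\begin{equation*}
\dim\cX^0_{10} = \dim\mathrm{Gr}(V_8,\wedge^2V_5) + \bigl(\dim\P(H^0(W,\cO_W(2))) - \dim\Aut(W)\bigr),
\end{equation*}
and one checks this equals the advertised $22$; this matches $h^1(X,T_X)$ by rigidity.

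The key point is then the following: let $U\subset\cX^0_{10}$ be a dense open subset on which $\Aut(X)$ is constant, say equal to a finite group $G$ (such a $U$ exists because $\Aut(X)$ is finite and acts algebraically in families, so the order of $\Aut(X)$ is lower-semicontinuous and hence generically constant; one stratifies the family by the order of the automorphism group and takes $U$ to be the stratum where this order is minimal). I claim $G$ is trivial. Suppose not; pick $1\neq g\in G$. Since $G$ is the automorphism group of \emph{every} $X$ in $U$, and since by the corollary every automorphism of $X$ is induced by an element of $\Aut(W)\subset\PGL(V_5)$ preserving $W$, we get for each $X$ in $U$ a nontrivial element $g_X\in\Aut(W)$ of finite order preserving $X=W\cap\Omega_X$, i.e. preserving the quadric $\Omega_X$ up to the subspace of quadrics vanishing on $W$. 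The main obstacle, and the heart of the argument, is to show that this forces $X$ (equivalently $\Omega_X$) to lie in a proper closed subset of the parameter space, contradicting density of $U$: the locus of quadrics in $\P(H^0(W,\cO_W(2)))$ fixed by a given finite-order $h\in\Aut(W)$ is a proper linear subspace (since a nontrivial finite-order linear automorphism of a vector space never fixes everything), and $\Aut(W)$ is only $8$-dimensional, so the union over all $1\neq h\in\Aut(W)$ of finite order of these fixed loci, together with the $\PGL(V_5)$-orbit directions, cannot fill up the $22$-dimensional family. One makes this precise by an incidence-variety dimension count: let $Z=\{(h,\P_7,\Omega)\mid h\in\Aut^+(W)\text{ of finite order},\ h\neq 1,\ h\cdot\Omega\equiv\Omega\bmod(\text{quadrics through }W)\}$; project to the first two factors and bound fibre dimensions using that $h$ acts nontrivially on $H^0(W,\cO_W(2))$ hence has fixed locus of codimension $\geq 1$ there; conclude $\dim Z < \dim\{(\P_7,\Omega)\}$, so the image of $Z$ in $\cX^0_{10}$ is proper.

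I would carry this out in the following order. First, record the parameter count and identify $\dim\cX^0_{10}=22$ with $h^1(X,T_X)$ via Theorem~\ref{autofini}. Second, invoke the corollary (isomorphisms come from $\Aut(W)$) to translate ``$X$ has a nontrivial automorphism'' into ``$\Omega_X$ is fixed modulo $I_W$ by some nontrivial finite-order element of $\Aut(W)$''. Third, do the incidence-variety dimension count described above, for which the only nontrivial input is that a finite-order $h\neq1$ in $\Aut(W)$ acts nontrivially on $H^0(W,\cO_W(2))$ — this follows because $\Aut(W)$ already acts faithfully on $\P(V_8)$ and hence on the degree-$2$ part of the homogeneous coordinate ring of $W$ (a variety is recovered from that ring, being projectively normal here, or more simply: $h$ fixing all quadrics through $W$ would fix $W$ pointwise, forcing $h=1$). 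Fourth, conclude that the general $X$ has $\Aut(X)=\{1\}$. I expect the third step — organizing the dimension estimate cleanly, in particular handling the unipotent part $\C^4\ltimes\C^*$ of $\Aut(W)$, which contains no nontrivial elements of finite order other than in the $\C^*$-factor, so that the relevant $h$ effectively range over the $3$-dimensional family of finite-order elements of $\PGL(2,\C)=\Aut(c_U)$ together with finitely many roots of unity in the $\C^*$ — to be the part requiring the most care, though none of it is deep.
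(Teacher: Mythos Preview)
Your overall strategy---set up an incidence correspondence between nontrivial elements of $\Aut(W)$ and quadrics they projectively fix in $H^0(W,\cO_W(2))$, then count dimensions---is exactly the paper's. The genuine gap is quantitative: the only bound you extract on the fibre over a fixed $h$ is ``codimension $\ge 1$'', obtained from the mere faithfulness of the $\Aut(W)$-action on $H^0(W,\cO_W(2))$. That is far too weak. For the count to close, you need the locus of eigenvectors of $h^*$ (not just the $1$-eigenspace---the condition is $h^*\Omega=z\Omega$ for \emph{some} scalar $z$) to have codimension strictly greater than the dimension of the family of $h$'s you are ranging over. Even if one grants your (not quite correct) claim that finite-order $h$'s effectively range over a $3$-dimensional family, codimension~$\ge 1$ does not beat $3$; and in fact the conjugacy class of a finite-order element of $\Aut(W)$ can have dimension as large as $7$, since the centraliser can be $1$-dimensional. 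So what is actually required is a bound of the form ``every eigenspace of $h^*$ on $\Sym^2V_8^\vee$ (equivalently on $H^0(W,\cO_W(2))$) has codimension $>7$'', and this does not follow from faithfulness alone---it requires real work. The paper supplies it by an explicit eigenvalue computation: reducing by semicontinuity to $h$ with trivial image in $\PGL(2)$, one finds the eigenvalues of $h^*$ on $\Sym^2V_8^\vee$ (dimension $36$) and checks that for $\zeta\ne 1$ no eigenspace exceeds dimension $20$, so the parameter count gives $20+h^0(\P_7,\cI_W(2))=25<36-7$. The unipotent case $\zeta=1$ is then handled separately by a rank computation for $\Id-\Phi$.

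A smaller point: your assertion that $\C^4\ltimes\C^*$ ``contains no nontrivial elements of finite order other than in the $\C^*$-factor'' is false as stated. If $\zeta\in\C^*$ has finite order and acts on $\C^4$ with no trivial weight, then every $(v,\zeta)$ has the same finite order as $\zeta$; these are all conjugate to $(0,\zeta)$, but the conjugacy class is $4$-dimensional, so restricting to finite-order $h$ does not shrink the family of $h$'s as much as you hoped.
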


\begin{proof}
Assume  $\varphi\in\GL(V_5)$ induces a nontrivial automorphism of $W$. We want to prove that the 
space of polynomials $Q\in H^0(\P_7,\cO_{\P_7}(2))=\Sym^2V_8^\vee$ such that $\varphi$ preserves $X= \P_7\cap \{Q=0\}$ has 
codimension bigger than $7=\dim (\Aut(W))$. 

This condition is equivalent to the 
fact that there exists a nonzero scalar $z$ such that $\varphi^*Q=zQ+P$ for 
some $P\in H^0(\P_7,\cI_W(2))$. So the dimension we are interested in is controlled by 
the dimension of the eigenspaces of $\varphi^*$. To estimate 
the dimensions of these eigenspaces, we may by semicontinuity 
assume that the image of $\varphi$ in $\PGL(2,\C)$ (see (\ref{autw})) is the identity.  The 
eigenvalues of $\varphi$ on $V_5$ are then $1$, with multiplicity $2$, and some $\zeta^{-1}$ 
with multiplicity $3$. The eigenvalues of $\varphi^*$ on $\Sym^2V_8^\vee$ 
are thus $1$, $\zeta$, $\zeta^2$, $\zeta^3$, $\zeta^4$, with respective multiplicities $1$, $4$, $13$, $12$, and $6$. 
These eigenvalues can coincide for certain values of $\zeta$, but it is easy to check 
that for $\zeta\ne 1$, no eigenspace can have dimension  $>20$. So the
maximal number of parameters for $Q$ is 
$$20+h^0(\P_7,\cI_W(2))=25 <36-7.$$ 

Suppose now   $\zeta=1$.  As we saw in \S\ref{fourw}, this means that in a decomposition $V_5=U_2\op U_3$, we have
$$\varphi=\left( \begin{matrix} I_2 & 0 \\ \phi_0 & I_3 \end{matrix}\right),$$
with $\varphi_0$ in $ \Sym^3U_2^\vee\subset \Hom(U_2,\Sym^2U_2^\vee)$ completely symmetric (but nonzero).  
By semicontinuity again, we may suppose that $\varphi_0$ is of the form $\ell^3$ for some linear form 
$\ell$ on $U_2$, so that 
$$\forall u,x,y\in U_2\quad \varphi_0(u)(x,y)=\ell(u)\ell(x)\ell(y)   .$$
Thus $\varphi_0$ has rank $3$, and a straightforward computation shows that the 
endomorphism $\Phi$ of $\Sym^2V_8^\vee$ induced by $\varphi$ is such that $\Id-\Phi$ has rank $18$. 
Therefore we get at least $18-5>7$ conditions on $Q$.  This concludes the proof. \end{proof}

\begin{rema}\label{codim}
The proof shows that  Fano threefolds of type $\cX_{10}^0$ with nontrivial automorphisms form a family of codimension $\ge 4$.
\end{rema}

\subsection{Moduli}\label{modu}

The most natural way to define an moduli ``space'' for Fano varieties of type $\cX_{10}^0$ would be as the quotient of an open subset of $ |\cO_W(2))|$ by the action of $\Aut(W)$. However, the latter group being nonreductive, the question of whether this quotient is a scheme is difficult to settle. On the other hand, it is not difficult to construct this same ``space'' as  the quotient of a quasi-projective variety  by the action of the (reductive) group $\SL(V_5)$, but again, it is not clear whether this subset corresponds to (semi)stable points for some polarization, or even that the action  is proper. We hope to come back to this interesting question in the future, but for the time being, we will just consider the algebraic   {\em stack}
 $\cN_{10}$ of Fano threefolds of  type $\cX_{10}^0$. It is an algebraic, smooth, irreducible  stack of  dimension   
$$\dim |\cO_W(2))|-\dim(\Aut(W))=30-8
= 22.
$$
This will be sufficient for our purposes. Locally around a point corresponding to a threefold $X$, this stack is given by   the local Kuranishi space of $X$,   a smooth 22-dimensional variety, acted on by the finite group $\Aut(X)$.
We   denote by  $\wp:\cN_{10}\to\cA_{10}$ the period map.

\section{The varieties of conics contained in $X$}\label{sec5} 

Most of the material in this section is due to Logachev (\cite{lo}).

\subsection{The surfaces $ F_g(X)$ and $F(X)$}\label{surF}
Let $X$ be a Fano threefold of type $\cX_{10}^0$.
Let $ F_g(X)$   be the variety of  (possibly nonreduced or reducible) conics contained in $X$.  It follows from deformation theory and \cite{ip}, Proposition 4.2.5.(iii),  that $F_g(X)$ has  dimension $2$ everywhere.

As seen in \S\ref{pla}, any conic $c$ in $G(2,V_5)$ is contained in some $G(2,V_4)$, where the hyperplane $V_4\subset V_5$ is uniquely determined by $c$ unless $\langle c\rangle$ is a $\beta$-plane.
It is therefore natural to introduce the incidence variety
$$F(X)=\{(c,[V_4]) \in F_g(X)\times \P(V_5^\vee) \mid  c\subset G(2,V_4)\}.
$$
The first projection $p_1:F(X)\to F_g(X)$ is an isomorphism except over the point $[c_X]$  that corresponds to the $\rho$-conic $\Pi\cap \Omega$, where the fiber $L_\rho$ is isomorphic to the line $L_U$ via the second projection $p_2:F(X)\to \P(V_5^\vee)$. The   conic $c_X$ is the only $\rho$-conic on $X$.  

For any $[V_4]$ in $L_U$, the intersection $\Pi_{V_4}\cap \Omega$ is   a $\sigma$-conic in $X$.   These conics describe a curve $L_\sigma\subset F(X)$ isomorphic to $L_U$ via $p_2$.  These are the only   $\sigma$-conics on $X$.  

If $c$ is a conic contained in $X$, its span $\langle c\rangle$ is a $2$-plane such that
$\langle c\rangle\cap X=c$.\footnote{This is because $X$ contains no $2$-planes (footnote \ref{lef}) and is an intersection of quadrics.}

If $(c,V_4)$ is a  $\sigma$-conic, the intersection $\Pi\cap\Pi_{V_4}$ is a line  in $\Pi$ tangent to the conic $c_U^\vee$. In general, it meets $c_X$ in the  two points of $c_X\cap c$. Through any of these two points passes one other tangent to $c_U^\vee$, hence $c$ meets exactly two other $\sigma$-conics, at points of $c_X$. In particular, two general $\sigma$-conics are disjoint. Also,  there are two   $\sigma$-conics    through a general point of $c_X$.

Any point of a $\sigma$-conic $(c,V_4)$ corresponds to a line in $\P(V_4)$, which must meet $\P(U_3)$. Therefore, the union of all $\sigma$-conics in $X$ is contained in its   section with the hyperplane
$\{ V_2\subset V_5\mid V_2\cap U_3\ne\{0\}\} $. This section being irreducible, they are equal.

\subsection{The involution $\iota $ on $F(X)$}\label{invo}

Let $V_4\subset V_5$ be a hyperplane.
In the $3$-plane  $\P(M_{V_4})= \P(\wedge^2 V_4) \cap \P_7$ introduced   in \S\ref{iden}, let us define the quadric 
$$Q_{\Omega,V_4} =  \Omega\cap \P(M_{V_4})$$
and set (see (\ref{qw}))
\begin{equation}\label{gamma}
\Gamma^1_{4,V_4}=X\cap \P(M_{V_4})=Q_{W,V_4}\cap Q_{\Omega,V_4},
\end{equation}
which is      
 a genus-$1$ degree-$4$ $1$-cycle.\footnote{\label{lef}Since $\Pic(X)$ is generated by $\cO_X(1)$, all surfaces in $X$ have degree divisible by $10$, hence $ \Gamma^1_{4,V_4}$ is a curve.} 
We have
$$(c,V_4)\in F(X) 
 \Longleftrightarrow   c\subset \Gamma^1_{4,V_4}.
$$
{\em Assume now  that $X$ is general.} Easy parameters counts (see \cite{lo}, Lemma 3.7) then show that the residual curve is another conic $\iota(c)\subset X$ that meets $c$ in two points. This defines a fixed-point-free involution $\iota$ on $F(X)$ and the quotient $F_\iota(X)=F(X)/\iota$ maps injectively to $\P(V_5^\vee)$ by the projection $p_2$. 

 For any $[V_4]$ in $L_U$, the quadric  $ Q_{W,V_4}$ is already reducible (see \S\ref{iden}), hence $L_U\subset p_2(F(X))$. Moreover, $p_2^{-1}([V_4])$ has exactly two points, which correspond to the conics $\Pi\cap \Omega$ and $\Pi_{V_4}\cap \Omega$. We have $p_2^{-1}(L_U)=L_\rho\sqcup L_\sigma$ and $\iota (L_\rho)=L_\sigma$.

Here is a     characterization of the involution $\iota$.

\begin{lemm}\label{l51}
Let $c$ and $c'$ be two conics on a general $X$ of type $\cX_{10}$, with no common component. If  
$\dim(\langle c,c'\rangle)=3$ and $\deg(c\cdot c')=2$, we have $c'=\iota(c)$.
\end{lemm}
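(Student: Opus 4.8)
The goal is to recognize, from purely intrinsic incidence data, that $c'$ is the involution-partner of $c$. The plan is to reconstruct the hyperplane $V_4$ attached to $c$ from the hypothesis on the span. First I would argue that, since $c$ and $c'$ have no common component, meet with $\deg(c\cdot c')=2$, and together span a $3$-space $P:=\langle c,c'\rangle=\P(M)$ for some $4$-dimensional $M\subset V_8=\wedge^2V_5\cap V_8$, the scheme $c+c'$ is a $1$-cycle of degree $4$ and arithmetic genus $1$ contained in $X\cap P$. But $X\cap P = W\cap\Omega\cap P$ is cut out in the $3$-plane $P$ by two quadrics (the restrictions of the two generators of $\cI_W$ that survive, together with $\Omega$ — more precisely by $Q_{W,V_4}\cap Q_{\Omega,V_4}$ once we know $P=\P(M_{V_4})$), hence is itself a curve of degree $\le 4$ and genus $\le 1$; since it already contains the degree-$4$, genus-$1$ cycle $c+c'$, we get $X\cap P = c+c'$ as cycles.

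Next I would identify $P$ with some $\P(M_{V_4})$. Since $P\subset\P_7=\P(V_8)$ is a $3$-plane meeting $G(2,V_5)$ along a curve containing $c\cup c'$ of degree $4$, and $c$ is a conic in $G(2,V_5)$, by \S\ref{pla} there is a hyperplane $V_4\subset V_5$ with $c\subset G(2,V_4)$ (this $V_4$ is unique because on a general $X$ the only $\rho$-conic is $c_X$, and if $c=c_X$ the statement is handled by the explicit description $p_2^{-1}(L_U)=L_\rho\sqcup L_\sigma$ with $\iota(L_\rho)=L_\sigma$ in \S\ref{invo}, so we may assume $c$ is not a $\rho$-conic). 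Then $\langle c\rangle\subset\P(\wedge^2V_4)\cap\P_7=\P(M_{V_4})$. The point is that $P$, being a $3$-plane containing the $2$-plane $\langle c\rangle$ and contained in $\P_7$, must equal $\P(M_{V_4})$: indeed $\P(M_{V_4})$ is a $3$-plane containing $\langle c\rangle$, and if $P\ne\P(M_{V_4})$ then $P\cap\P(M_{V_4})$ would be exactly $\langle c\rangle$, forcing $c'\subset\langle c\rangle$, hence $c'\subset\langle c\rangle\cap X=c$ (using that $X$ contains no $2$-planes, footnote \ref{lef}), contradicting that $c,c'$ have no common component. Therefore $P=\P(M_{V_4})$ and $X\cap P=\Gamma^1_{4,V_4}$ in the notation of \eqref{gamma}.

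Putting the two steps together: $c+c'=X\cap P=\Gamma^1_{4,V_4}$ as $1$-cycles, so $c'$ is precisely the residual conic to $c$ inside $\Gamma^1_{4,V_4}$. By the definition of $\iota$ in \S\ref{invo} (valid because $X$ is general), the residual conic to $c$ in $\Gamma^1_{4,V_4}$ is $\iota(c)$, whence $c'=\iota(c)$.

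\textbf{Main obstacle.} The delicate point is the genus/degree bookkeeping showing $X\cap\P(M_{V_4})$ is \emph{exactly} the $1$-cycle $c+c'$ and nothing more — one must be careful that $X\cap P$ really is a curve (no embedded or excess components, no jump to a surface) and that its degree is $4$ and arithmetic genus $1$, so that the degree-$4$ genus-$1$ subcycle $c+c'$ exhausts it. This is where one uses that $\Pic(X)=\Z[\cO_X(1)]$ forces all surfaces in $X$ to have degree divisible by $10$ (footnote \ref{lef}), ruling out a $2$-dimensional intersection, together with the fact that $\P(M_{V_4})\cap G(2,V_5)=Q_{W,V_4}$ is a quadric surface so that $\Gamma^1_{4,V_4}=Q_{W,V_4}\cap Q_{\Omega,V_4}$ is a complete intersection of the expected type. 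The verification that the unique $V_4$ attached to $c$ is the one whose $\P(M_{V_4})$ contains $P$ is essentially formal once $c$ is not a $\rho$-conic; the genuinely geometric input is Logachev's parameter count (\cite{lo}, Lemma 3.7) underlying the very existence of $\iota$, which we are allowed to assume.
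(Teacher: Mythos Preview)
Your argument has a genuine gap at the step where you claim $P=\P(M_{V_4})$. You write: ``if $P\ne\P(M_{V_4})$ then $P\cap\P(M_{V_4})$ would be exactly $\langle c\rangle$, forcing $c'\subset\langle c\rangle$.'' But this implication is unjustified: from $c'\subset P$ and $P\cap\P(M_{V_4})=\langle c\rangle$ you can conclude nothing about $c'$ lying in $\langle c\rangle$ unless you already know $c'\subset\P(M_{V_4})$ --- which is exactly what you are trying to prove. There are many $3$-planes in $\P_7$ containing the $2$-plane $\langle c\rangle$ (they form a $\P^4$), and only one of them is $\P(M_{V_4})$; nothing you have said forces $P$ to be that one. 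Your ``Main obstacle'' paragraph calls this step ``essentially formal,'' but it is in fact the heart of the lemma.

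The paper's proof fills this gap with a different idea. After disposing of the case where one of the planes $\langle c\rangle$, $\langle c'\rangle$ lies in $W$ (which forces one conic to be $c_X$ and the other a $\sigma$-conic), one observes that the line $\langle c\rangle\cap\langle c'\rangle$ is not contained in $W$, so one may pick a point $z$ on it outside $G(2,V_5)$. Such a $z$ is a rank-$4$ bivector, hence $z=v_1\wedge v_2+v_3\wedge v_4$ with $v_1,\dots,v_4$ spanning a hyperplane $V_4\subset V_5$; the classical fact is that \emph{every} bisecant line to $G(2,V_5)$ through $z$ lies in $\P(\wedge^2V_4)$. Now any line through $z$ in the plane $\langle c\rangle$ meets $G(2,V_5)$ in the two points where it meets the conic $c$, hence is such a bisecant; so $\langle c\rangle\subset\P(\wedge^2V_4)$, and symmetrically $\langle c'\rangle\subset\P(\wedge^2V_4)$. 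This is what pins down the common $V_4$ for both conics simultaneously and yields $P\subset\P(M_{V_4})$, after which the conclusion $c'=\iota(c)$ is immediate. Your degree/genus bookkeeping for $X\cap P$ is then not even needed.
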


\begin{proof}
If $\langle c\rangle$ is  contained in $W$, the line $\langle c\rangle\cap  \langle c'\rangle$ and the conic $c'$ are in $W\cap  \langle c'\rangle$. The line is not contained in $c'$ (because  $X\cap  \langle c \rangle=c$) hence   $W$, which is an intersection of quadrics, contains $\langle c'\rangle$. One of the conics is then $c_X$ and the other is a $\sigma$-conic so we are done.

Since $W$ is an intersection of quadrics, if it contains neither planes $\langle c\rangle$ and $\langle c'\rangle$,  it does not contain the line  $\langle c\rangle\cap  \langle c'\rangle$. Any point $z$ on the line $\langle c\rangle\cap  \langle c'\rangle$ but not on $c\cup c'$ is not on $G(2,V_5)$.
We may write $x=v_1\wedge v_2$, $y=v_3\wedge v_4$, and $z=v_1\wedge v_2+v_3\wedge v_4$. The vectors $v_1,\dots,v_4$ span a hyperplane  $V_4\subset V_5$ and every  bisecant line to $G(2,V_5)$ passing through $z$ is contained in $\P(\wedge^2 V_4)$.  It follows that $\P(M_{V_4})$ contains $c$ and $c'$, hence $c'=\iota(c)$.\end{proof}

\begin{rema}\label{any}
One checks, by a case-by-case analysis, that the involution $\iota$ can still be defined on $F(X)$ for {\em any} smooth $X$. It can however have   fixed points. More precisely,  for a smooth conic $c\subset X$, the following conditions are equivalent:
\begin{itemize}
\item[(i)] $\iota(c)=c$;
\item[(ii)] $c$ is a $\tau$-conic with normal bundle   $N_{c/X}\isom\cO(2)\oplus\cO(-2)$;
\item[(iii)] there is a $\P^3\subset\P_7$ such that $X\cap\P^3=2c$ (this generalizes Lemma \ref{l51}).
\end{itemize}
Here is a quick proof. If   (i) holds, we have $X\cap \P(M_{V_4})=2c$, hence (iii) holds. If (iii) holds, the conic $c\subset \P^3$ is
  the intersection of a double plane and a quadric $Q$. With the notation of \S\ref{ec}, the normal direction to $c$ in $Q$ at each point of $c$ corresponds to a   curve  in the exceptional divisor $E$ which is contracted by the projection from $c$, hence $N_{c/X}$ must be $\cO(2)\oplus\cO(-2)$, and (ii) holds.
Finally, if $(c,V_4)$ is a $\tau$-conic, $N_{c/W}\isom \cO(2)\oplus\cO(1)\oplus\cO(1)$. In particular, the $\cO(2)$-factor is uniquely determined, and it is the image of $N_{c/Q_{W,V_4}}$. If (ii) holds, the $\cO(2)$-factor of $N_{c/X}$ must map to $N_{c/Q_{W,V_4}}$, hence the induced morphism $N_{c/Q_{W,V_4}} \to N_{X/W}$ must be zero. This means that the quadric   $\Omega$ vanishes to order $2$ on $c$, hence (i) holds.

These conics correspond to singular points of the surface $F(X)$.
\end{rema}

\subsection{Logachev's tangent bundle theorem}\label{stbt} {\em Assume again  that $X$ is general.} 
 Logachev shows that $F(X) $ and $F_g(X)$ are smooth connected surfaces (\cite{lo}, Corollary 4.2). The map $p_1:F(X)\to F_g(X)$ is the contraction of the exceptional curve $L_\rho$ to $[c_X]$. The curve $L_\sigma=\iota(L_\rho)$ is therefore also exceptional. Let $F_g(X)\to F_m(X)$ be its contraction and let $r:F(X)\to F_m(X)$ be the composition.

   Let 
 $\beta:F(X)\to G(2,V_8)$ 
be the map that sends a conic $c$ to the projective line  $\langle c\cap \iota( c)\rangle$, and let $ \cS_{2,V_8} $ be the tautological bundle on $G(2,V_8)$.
 On the open  set $F (X)^0=F(X)\moins (L_\rho\sqcup L_\sigma)$, there is an isomorphism  (\cite{lo}, Theorem 4.14)
 \begin{equation*}\label{tbt}
  T_{F (X)^0}\isomlra  (\beta^*\cS_{2,V_8})\vert_{F(X)^0} . 
  \end{equation*}
   This  ``tangent bundle theorem''  has the following geometric interpretation (\cite{lo}, Theorem 7.2):   in the diagram
  $$
\xymatrix
{\P( T_{F_m(X)})\ar@{-->}[r]^-\psi\ar[d]^\pi& \P (H^0(F_m(X), \Omega_{F_m(X)})^\vee)\isom \P^9\ar@{-->}[d]^{\pi'}\\
F_m(X)&\P_7 }
$$
where $\psi$ is the {\em cotangent map} and $\pi' $ is the projection from the line $\psi(\pi^{-1}([c_X]))$, the composition $\pi'\circ \psi$ sends a general fiber $\pi^{-1}([c])=\P (T_{F_m(X),[c]})$ to the line $\langle  c \cap   \iota(c)\rangle$ in $\P_7$.

  \begin{coro}\label{min}
 The only rational curves   in $F(X)$ are $L_\rho$ and $L_\sigma$. In particular, the surface $ F_m(X) $ is minimal.
\end{coro}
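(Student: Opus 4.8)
The plan is to exploit the tangent bundle theorem of \S\ref{stbt}, which identifies $T_{F(X)^0}$ with the pullback $\beta^*\cS_{2,V_8}$ of the tautological rank-$2$ bundle on $G(2,V_8)$. Suppose $R\subset F(X)$ is an irreducible rational curve other than $L_\rho$ and $L_\sigma$. Since $L_\rho$ and $L_\sigma$ are the only curves contracted by $r:F(X)\to F_m(X)$, the image $r(R)\subset F_m(X)$ is again an irreducible rational curve; I would argue on $F_m(X)$, which is a smooth minimal surface of general type by \S\ref{stbt}. On a minimal surface of general type, rational curves are $(-2)$-curves (they can only be smooth rational with self-intersection $-2$, by adjunction and the fact that $K_{F_m(X)}$ is nef), so in particular $K_{F_m(X)}\cdot r(R)=0$ and $r(R)^2=-2$. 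The strategy is to derive a contradiction from the existence of such a curve by restricting the tangent-bundle identification to $R$ (or to its normalization mapping into $F(X)^0$, after checking $R$ meets the open part).

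First I would check that any such $R$ meets $F(X)^0=F(X)\setminus(L_\rho\sqcup L_\sigma)$ in a dense open subset: if $R$ were contained in $L_\rho\cup L_\sigma$ it would be one of those two curves. So the normalization $\nu:\P^1\to F(X)$ of $R$ lands generically in $F(X)^0$, and pulling back the isomorphism $T_{F(X)^0}\isom \beta^*\cS_{2,V_8}|_{F(X)^0}$ along $\nu$ gives, over the open locus, an identification of $\nu^*T_{F(X)}$ with $\nu^*\beta^*\cS_{2,V_8}$. The key point is a degree computation: the bundle $\cS_{2,V_8}$ is a subbundle of the trivial bundle $V_8\otimes\cO_{G(2,V_8)}$, hence its dual $\cS_{2,V_8}^\vee$ is globally generated and $\cS_{2,V_8}$ itself has no quotient line bundle of positive degree; consequently, for the morphism $\beta\circ\nu:\P^1\to G(2,V_8)$, the pullback $(\beta\circ\nu)^*\cS_{2,V_8}$ is a rank-$2$ bundle on $\P^1$ all of whose quotient line bundles have degree $\le 0$, so it is of the form $\cO(a)\oplus\cO(b)$ with $a,b\le 0$, i.e. $\deg\le 0$, with equality iff $\beta\circ\nu$ is constant. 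On the other hand $\nu^*T_{F(X)}$ has degree $\deg\nu^*T_R+\deg\nu^*N_{R/F(X)} = \deg(-K_{F(X)}\cdot R)+\ldots$; more simply, since $T_{F(X)}$ restricted to a rational curve $R\cong\P^1$ sitting in a surface has degree $-K_{F(X)}\cdot R = 2 + R^2 - (\text{correction from singularities of }R)$, and $-K_{F(X)}\cdot R\ge 1$ whenever $R$ is rational and not contracted to a point of general type... — I would instead run this cleanly on $F_m(X)$: there $-K_{F_m(X)}\cdot r(R)=0$, yet the tangent bundle theorem descends to give $T_{F_m(X)}$ the structure forcing $c_1(T_{F_m(X)})=-K_{F_m(X)}$ to be the pullback of $c_1(\cS_{2,V_8})$, and restricting to $r(R)$ one finds the two degrees must agree, which is consistent, so the contradiction has to come from rank/splitting rather than degree alone.

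The cleanest argument, and the one I would ultimately write: restrict the isomorphism $T_{F(X)^0}\isom\beta^*\cS_{2,V_8}$ to (the open part of) $R$. The left side contains $T_R\cong\cO_{\P^1}(2)$ as a subsheaf (at least away from the finitely many singular points of $R$ and the finitely many points where $R$ meets $L_\rho\cup L_\sigma$), hence $\nu^*\beta^*\cS_{2,V_8}$ admits a subsheaf of degree $\ge 2 - (\text{finite defect})$. But $\cS_{2,V_8}\hookrightarrow V_8\otimes\cO$, so a sub-line-bundle of $\nu^*\beta^*\cS_{2,V_8}$ is a sub-line-bundle of the trivial bundle $\cO_{\P^1}^{\oplus 8}$, hence has degree $\le 0$ — and I must control the "defect" coming from the two loci where the isomorphism is only asserted on $F(X)^0$ and from singular points of $R$. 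This defect control is the main obstacle: I would handle the singular points of $R$ by passing to the normalization and using that a sub-line-bundle of a bundle that is globally a subsheaf of $\cO^{\oplus 8}$ still has non-positive degree on $\P^1$; I would handle the finitely many points of $R\cap(L_\rho\cup L_\sigma)$ by noting that a line bundle quotient-or-sub relation extended across finitely many points can only decrease the relevant degree, so the bound $2 \le \deg(\text{sub-line-bundle}) + (\text{number of bad points}) $... — to make this rigorous without a delicate count I would instead invoke that $R$ (if not $L_\rho$ or $L_\sigma$) can be moved to avoid $L_\rho\cup L_\sigma$ entirely, or simply work on $F_m(X)$ where the analogous isomorphism of $T_{F_m(X)^0}$ holds and $r(R)$ is a $(-2)$-curve: a $(-2)$-curve $C$ on $F_m(X)$ has $T_C\cong\cO_{\P^1}(2)$ and $N_{C/F_m(X)}\cong\cO_{\P^1}(-2)$, so $T_{F_m(X)}|_C$ is an extension with sub $\cO(2)$, forcing $\beta^*\cS_{2,V_8}|_C$ to have a sub-line-bundle of degree $2$, contradicting $\cS_{2,V_8}\subset V_8\otimes\cO$ (every sub-line-bundle of $\beta^*\cS_{2,V_8}|_C$ has degree $\le 0$) once one checks $C$ meets $F_m(X)^0$. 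The minimality of $F_m(X)$ is then immediate: a minimal surface of general type has no $(-1)$-curves, and we have just shown it has no rational curves at all, so in particular the statement "$F_m(X)$ is minimal" follows (and indeed is already part of the input from \S\ref{stbt}); the substantive content is the classification of rational curves in $F(X)$.
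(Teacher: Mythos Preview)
Your instinct to use the tangent bundle theorem is right, and the contradiction you are after (an injection $\cO(2)\hookrightarrow$ a subbundle of a trivial bundle on $\P^1$) is the correct endpoint. But the execution has two problems. First, you invoke that $F_m(X)$ is already a minimal surface of general type in order to conclude that $r(R)$ is a $(-2)$-curve; this is circular, since minimality is exactly the ``in particular'' you are meant to prove (the sentence in the Notation section is a forward reference, not an established fact at this point). The detour is also unnecessary: for \emph{any} non-constant $\nu:\P^1\to F_m(X)$ the differential already gives $\cO(2)=T_{\P^1}\hookrightarrow\nu^*T_{F_m(X)}$, with no hypothesis on $K_{F_m(X)}$.

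Second, and more seriously, you correctly flag the ``defect'' problem---the isomorphism $T_{F(X)^0}\cong\beta^*\cS_{2,V_8}$ is only asserted on the open set---but none of your fixes works: a curve on a surface cannot be ``moved'' off $L_\rho\cup L_\sigma$, and $\beta$ lives on $F(X)$, not on $F_m(X)$, so writing ``$\beta^*\cS_{2,V_8}|_C$'' for $C\subset F_m(X)$ is ill-formed. The missing idea is the reason passing to $F_m(X)$ actually helps: under $r$ the two curves $L_\rho,L_\sigma$ contract to two \emph{points}, so $F_m(X)\moins F_m(X)^0$ has codimension~$2$. Hence the $1$-forms on $F_m(X)^0$ arising from $V_8^\vee$ via the tangent bundle theorem extend, by Hartogs on the smooth surface $F_m(X)$, to global sections of $\Omega_{F_m(X)}$; thus $\Omega_{F_m(X)}$ is globally generated away from those two points---this is precisely the sentence the paper places immediately after the corollary. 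Now for any non-constant $\nu:\P^1\to F_m(X)$, the pullback $\nu^*\Omega_{F_m(X)}$ is globally generated at all but finitely many points of $\P^1$; but a rank-$2$ bundle on $\P^1$ that is globally generated at even one point splits as $\cO(a)\oplus\cO(b)$ with $a,b\ge0$, and then the nonzero cotangent map to $\Omega_{\P^1}=\cO(-2)$ is impossible. That is the entire proof; no $(-2)$-curve classification is needed.
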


 Since $\cS_{2,V_8}^\vee$ is generated by global sections on $G(2,V_8)$,  so is  $\Omega_{F_m(X) }$, except possibly at the points $[c_X]$ and $\iota([c_X])$. We will show later (Corollary \ref{gene}) that it is in fact globally generated everywhere.
 
 \begin{rema}
 For {\em any} $X$ of type $\cX_{10}^0$, the scheme $F(X)$ is still an irreducible surface (Corollary \ref{cor83}) which may have singular points  (Remark \ref{any}).
 
 Note that  the Fano surface   of a  Gushel threefold  (see \S\ref{gush}) has two components  (\cite{gus}, Proposition (2.1.2)).  
 \end{rema}

 \section{Elementary transformations}

\subsection{Elementary transformation along a conic}\label{ec} 
Let $c$ be a {\em smooth}  conic contained in a Fano threefold $X$ of type $\cX_{10}^0$ and let $\pi_c:\P_7\dra \P^4$ be the projection from the $2$-plane $\langle c\rangle$.
If $\eps:\widetilde X\to X$ is the blow-up of $c$, with exceptional divisor $E$, the composition 
$\pi_c\circ \eps : \widetilde X\to \P^4$ is the {\em morphism} defined by  
the linear system $|-K_{\widetilde X}|=|-\eps^*K_X-E|$. The only curves contracted by this morphism are 
(\cite{ip}, Proposition 4.4.1.(ii)):
\begin{itemize} 
\item the strict transforms of the lines in $X$  that meet $c$;
\item the strict transforms  of   conics $c'\ne c$ such that  $\dim(\langle c,c'\rangle)=3$ and $\deg(c\cdot c')=2$;
\item  when the normal bundle to $c$ in $X$ is $\cO(2)\oplus\cO(-2)$,  the exceptional section of the ruled surface $E\isom \F_4$.
\end{itemize} 

It is easy to check that only finitely many lines meet $c$ when {\em   either $c$ is general in $F(X)$}  (\cite{ip}, Lemma 4.2.6), or {\em$X$ itself is  general and $c$ is {\em any} smooth conic.}

{\em Assume from now on that $X$  is  general and $c\ne c_X$.} It follows from Lemma \ref{l51} that the only conic contracted by $\pi_c$ is $\iota(c)$, hence the morphism $\phi_{|-K_{\widetilde X}|} $ contracts only finitely many curves.\footnote{When $c$ is a $\tau$-conic, one can show that $\pi_c$  is injective on $X\moins c$, whereas when $c$ is a $\sigma$-conic, $\pi_c$ maps every other $\sigma$-conic $2$-to-$1$ onto a line.} 
 Let  
$$\phi_{|-K_{\widetilde X}|}:{\widetilde X}\stackrel{\phi}{\lra} \bar X\lra \P^4$$
be its Stein factorization. The variety $\bar X $ has terminal hypersurface singularities, $-K_{\bar X}$ is ample, and $\phi^*K_{\bar X}=K_{\widetilde X}$.  The divisor $-E $ is $\phi$-antiample. In this situation, there exists a {\em $(-E)$-flop} (\cite{ip}, Theorem 1.4.15)
 $$\chi:\widetilde X\stackrel{\phi}{\lra} \bar X \stackrel{\phi'}{\longleftarrow} \widetilde X'$$
which is an isomorphism in codimension $1$. The projective threefold ${\widetilde X}'$ is smooth and, if $\bar H $ is a hyperplane section of $\bar X$, we have $-K_{\widetilde X'}={\phi'}^*\bar H$  and $\chi(-E)$ is $\phi'$-ample. 

We have $\rho(\widetilde X')=2$. Since the extremal ray generated by the class of curves contracted by $\phi'$   has $K_{\widetilde X'}$-degree $0$ and $K_{\widetilde X'}$ is not nef, the other extremal ray is $K_{\widetilde X'}$-negative and defines a contraction $\eps':\widetilde X'\to X'$. We have (\cite{ip}, Proposition 4.4.11.(ii)):
\begin{itemize} 
\item $X'$ is again a smooth Fano threefold of degree $10$ in $\P^7$;
\item $\eps'$ is the blow-up of a smooth conic $c'$ in $X'$, with exceptional divisor   $E'\equiv - 2K_{\widetilde X'}- \chi(E) $.
\end{itemize} 
There is a commutative diagram
\begin{equation}\label{dia}
\xymatrix
{\widetilde X\ar[dd]_{\eps}\ar[dr]_\phi\ar@{-->}[rr]^\chi&&\widetilde X'\ar[dd]^{\eps' }\ar[dl]^{\phi'}\\
&\bar X\\
X\ar@{-->}[rr]^{\psi_c}\ar@{-->}[ur]^{\pi_c}&&X'\ar@{-->}[ul]_{\pi_{c'}}
}\end{equation}
 If $H'$ is a hyperplane section of $X'$, we have
$$\chi^*{\eps'}^*H'\equiv \chi^*(-K_{\widetilde X'}+E')\equiv \chi^*(-3K_{\widetilde X'}-\chi(E)) 
\equiv -3\eps^*K_X-4E 
$$
hence $\psi_c$ is associated with a  linear subsystem of  $|\cI_c^4(3)|$.\footnote{\label{note11}If $\ell$ is the strict transform in $\widetilde X$ of a line in $X$ that meets $c$, we have $(3\eps^*H-4E)\cdot\ell =-1$ hence $\ell$ is in the base locus of $|\chi^*{\eps'}^*H'|$. Similarly, the strict transform in $\widetilde X$ of the conic $\iota(c)$ has intersection $-2$ with  $3\eps^*H-4E$, hence is also   in the base locus of $|\chi^*{\eps'}^*H'|$. Calculations on the blow-up of $\iota(c)$ in $\widetilde X$ show that the base ideal is in fact contained in $\cI_{\iota(c)}^2$. So, for  $c$  general, the rational map $\psi_c$ is associated with a  linear subsystem of  $|\cI_{\ell_1}\otimes\cdots\otimes \cI_{\ell_{10}}\otimes \cI_{\iota(c)}^2 \otimes\cI_c^4(3)|$, where $\ell_1,\dots ,\ell_{10}$ are the ten lines in $X$ that meet $c$.
}

Note that ${\eps'}^*H'-E'\equiv  -K_{\widetilde X'} \equiv {\phi'}^* \bar H $, so the picture is symmetric: the elementary  transformation of $X'$ along the conic $c'$ is $\psi_c^{-1}:X'\dra X$ (by construction, $\phi'$ automatically contracts only finitely many curves).

\begin{prop}\label{prop62}
Let   $X$ be a general Fano threefold of type $\cX_{10}$ and let $c$ be a smooth $\tau$-conic   on $X$.
There is a   birational isomorphism
$$\phi_c: F_g(X) \dra F_g(X')$$ 
which commutes with the (rational) involutions $\iota$ on $F_g(X)$ and $\iota'$ on $F_g(X')$. Its inverse is $\phi_{c'}$.
\end{prop}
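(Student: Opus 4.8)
The plan is to route $\phi_c$ through the threefold $\bar X$ of diagram (\ref{dia}). Recall $\bar X$ is the image of the birational morphism $\widetilde X\to\P^4$ defined by $|-K_{\widetilde X}|$. Since $c$ is a conic, $\deg N_{c/X}=-2-K_X\cdot c=0$, so $E^3=0$ and $(\eps^*(-K_X))^2\cdot E=0$, whence $(-K_{\bar X})^3=(-K_{\widetilde X})^3=(-K_X)^3-3\,(-K_X)\cdot c=10-6=4$; thus $\bar X$ is an anticanonically embedded, mildly singular quartic threefold, and $\phi\colon\widetilde X\to\bar X$, $\phi'\colon\widetilde X'\to\bar X$ are small resolutions contracting the curves flopped by $\chi$ — the finitely many lines meeting $c$, the conic $\iota(c)$, and their $\chi$-images in $\widetilde X'$. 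Put $\bar E:=\phi(E)$ and $\bar E':=\phi'(E')$, two surfaces in $\P^4$; then $\Sing\bar X\subset\bar E\cap\bar E'$, because every curve contracted by $\phi$ meets $E$ and every curve contracted by $\phi'$ meets $E'$.

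First I would identify $F_g(X)$ birationally with the variety $\Sigma_{\bar E}$ of conics on $\bar X$ disjoint from $\bar E$. A general point of the irreducible surface $F_g(X)$ is a smooth $\tau$-conic $\Gamma$ disjoint from $c$, from $\iota(c)$, and from the finitely many lines meeting $c$; so its strict transform $\widetilde\Gamma\cong\Gamma$ in $\widetilde X$ misses $E$ and every curve flopped by $\chi$, and since $-K_{\widetilde X}\cdot\widetilde\Gamma=2$ the map $\phi$ carries $\widetilde\Gamma$ isomorphically onto a smooth conic $\bar\Gamma\subset\bar X$ disjoint from $\bar E$. Conversely, a conic on $\bar X$ disjoint from $\bar E$ misses $\Sing\bar X$, hence lifts through $\phi$ to a curve missing $E$ and the flopped curves, which $\eps$ then sends isomorphically to a conic on $X$ disjoint from $c$; the two constructions are mutually inverse on dense open subsets. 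By the symmetry of (\ref{dia}) (recall $\psi_{c'}=\psi_c^{-1}$), running the same argument on the $X'$ side gives a birational isomorphism $F_g(X')\dra\Sigma_{\bar E'}$, the variety of conics on $\bar X$ disjoint from $\bar E'$. As a by-product, the intersection numbers in (\ref{dia}) show that $\psi_c$ itself sends a general conic of $X$ (being disjoint from $c$) to a degree-$6$ rational curve on $X'$ meeting $c'$ along a length-$4$ scheme, \emph{not} to a conic — which is why this detour through $\bar X$ is needed.

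It remains to see that $\Sigma_{\bar E}$ and $\Sigma_{\bar E'}$ are birationally the same surface; one then sets $\phi_c$ equal to the composition $F_g(X)\dra\Sigma_{\bar E}\dra\Sigma_{\bar E'}\dra F_g(X')$, and exchanging $X$ and $X'$ yields $\phi_c^{-1}=\phi_{c'}$. Since $\Sigma_{\bar E}$ and $\Sigma_{\bar E'}$ are the nonempty open loci of conics avoiding $\bar E$, resp.\ $\bar E'$, inside the Hilbert scheme of conics of $\bar X$, this reduces to showing that that Hilbert scheme is irreducible of dimension $2$ along the conics that avoid $\Sing\bar X$: both loci are then dense in it, hence birational. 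This irreducibility statement is the main obstacle and the one point that requires more than formal intersection theory — presumably one describes $\bar E$ (resp.\ $\bar E'$) explicitly, e.g.\ as the surface residual to $c$ (resp.\ $c'$) in a section of $\bar X$, and argues directly. Granting it, the compatibility with the involutions is formal: by Lemma \ref{l51} a general conic $\Gamma\subset X$ is paired by $\iota$ with the unique conic $\iota(\Gamma)\ne\Gamma$ such that $\dim\langle\Gamma,\iota(\Gamma)\rangle=3$ and $\deg(\Gamma\cdot\iota(\Gamma))=2$; the isomorphism $\Gamma\mapsto\bar\Gamma$ above preserves linear spans and intersection multiplicities, so on $\bar X$ the pair $\{\bar\Gamma,\overline{\iota(\Gamma)}\}$ is characterised by the very same two conditions, a description intrinsic to $\bar X$ and symmetric in $\bar E$ and $\bar E'$. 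Hence $\phi_c$ carries $\iota$-pairs to $\iota'$-pairs, i.e.\ $\phi_c\circ\iota=\iota'\circ\phi_c$ as rational maps.
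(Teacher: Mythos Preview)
Your route through the Hilbert scheme of conics on $\bar X$ breaks down precisely at the irreducibility step you flag --- but the obstruction is not that irreducibility is hard to prove: it is false. The two open sets $\Sigma_{\bar E}$ and $\Sigma_{\bar E'}$ are in fact disjoint. Lift a conic $\bar\gamma\in\Sigma_{\bar E}$ to $\tilde\gamma\subset\widetilde X$: by construction $E\cdot\tilde\gamma=0$ and $(-K_{\widetilde X})\cdot\tilde\gamma=2$. The paper records $E'\equiv-2K_{\widetilde X'}-\chi(E)$, so under the identification $\Pic(\widetilde X)\simeq\Pic(\widetilde X')$ induced by the flop, the class of $E'$ is $-2K_{\widetilde X}-E$, and $(-2K_{\widetilde X}-E)\cdot\tilde\gamma=4$. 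Hence the strict transform of $\bar\gamma$ on $\widetilde X'$ meets the effective divisor $E'$, so $\bar\gamma$ meets $\bar E'=\phi'(E')$ and $\bar\gamma\notin\Sigma_{\bar E'}$. In other words, the conics on $\bar X$ coming from $X$ and those coming from $X'$ lie in distinct numerical classes (distinguished by the non-Cartier Weil divisor $\bar E$), hence in distinct components of the Hilbert scheme. This is entirely consistent with your own observation that $\psi_c$ sends a conic of $X$ to a degree-$6$ curve on $X'$: that sextic is exactly what one obtains by pushing a conic of $\Sigma_{\bar E}$ through the ``wrong'' small resolution.

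The paper builds $\phi_c$ by a direct recipe that sidesteps this. For a general conic $\bar c$ disjoint from $c$, the $5$-plane $\langle c,\bar c\rangle$ cuts $X$ in a canonical genus-$6$ curve $c+\bar c+\Gamma_{c,\bar c}$, where $\Gamma_{c,\bar c}$ is a rational sextic meeting $c$ in four points. Since $\psi_c$ is defined by a subsystem of $|\cI_c^4(3)|$, one has $\deg\psi_c(\Gamma_{c,\bar c})\le 3\cdot6-4\cdot4=2$, so $\psi_c(\Gamma_{c,\bar c})$ is a conic on $X'$; one sets $\phi_c([\bar c])=[\psi_c(\Gamma_{c,\bar c})]$. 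The inverse is $\phi_{c'}$ by the symmetry of diagram~(\ref{dia}), and compatibility with $\iota,\iota'$ is checked by projecting the $5$-planes $\langle c,\bar c\rangle$ and $\langle c,\iota(\bar c)\rangle$ from $\langle c\rangle$ into $\P^4$ and comparing the residual conics on the quartic $\pi_c(X)$.
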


\begin{proof}
 Let  $\bar c$ be  a  conic  in $X$ disjoint from $c$.  The span   $\langle c,\bar c\rangle$ is a $5$-plane 
  that intersects $X\subset \P_7$ along  a canonically embedded genus-$6$ curve $c + \bar c + \Gamma_{c,\bar c}$, where  $\Gamma_{c,\bar c}$ is a sextic. Since $X\cap\langle c\rangle=c$, this implies that  $\Gamma_{c,\bar c}$ meets $c$ in four points, and similarly for 
  $\bar c$.
  
  We   now show that on $X$ general, for general conics $c$ and $\bar c$, the sextic $\Gamma_{c,\bar c}$ is smooth and irreducible. Note to that effect that  $S=G(2,V_5)\cap  \langle c,\bar c\rangle$ is a general smooth del Pezzo surface of degree $5$, which can therefore be expressed as the blow-up of the plane in four general points, with exceptional divisors $E_1,\dots,E_4$. If $h$ is the class of a general line in the plane, the embedding $S\to \langle c,\bar c\rangle$ is given by the linear system $|3h-E_1-\dots-E_4|$. The family of conics in $S$ is the union of the five pencils $|h-E_i|$, for $i\in\{1,\dots,4\}$, and $|2h-E_1-\dots-E_4|$. Since $c$ and $\bar c$ are disjoint, they must belong to the same pencil, say $|h-E_1|$ or $|2h-E_1-\dots-E_4|$.
The curve $c + \bar c + \Gamma_{c,\bar c}$ is the intersection of $S$ with a general quadric containing $c$ and $\bar c$. It follows that $ \Gamma_{c,\bar c}$ is a general member of the linear systems $|4h-2E_2-\dots-2E_4|$ or $|2h|$, which are both base-point-free. In particular, $ \Gamma_{c,\bar c}$ is a smooth rational curve of degree $6$.

 The rational map $\psi_c$ is defined on $\Gamma_{c,\bar c}$  by a linear subsystem of   $|\cI_c^4(3)|$, of  degree   
$\le 3\deg(\Gamma_{c,\bar c})-4c\cdot \Gamma_{c,\bar c}= 2$. For $\bar c$ general, its image  on $X'$ is therefore a conic.  This defines, for $X$ and $c$ general,    a rational map
$$\phi_c: F_g(X) \dra F_g(X').$$

Alternatively, a case-by-case analysis using footnote \ref{note11} shows that  for any smooth $\tau$-conic $c$,  the curve $\psi_c(\Gamma_{c,\bar c})$ is a conic as soon as
  $\bar c$ and $\iota(\bar c)$ are $\tau$-conics that meet  none of the lines that meet $c$ or   $\iota(c)$. This implies that the map $\phi_c$ is still defined in this case.
 
\medskip
The composition $\pi_{\langle c\rangle}=\phi\circ \eps^{-1}:X\dra \P^4$ is the projection from the $2$-plane $\langle c\rangle$, hence    $\pi_{\langle c'\rangle}=\phi'\circ {\eps'}^{-1}:X'\dra \P^4$   is the projection from the $2$-plane $\langle c'\rangle$. It follows that
  the curve $\psi_c(c + \bar c + \Gamma_{c,\bar c})$ in $X'$ lies in the $5$-plane $\pi_{\langle c'\rangle}^{-1}(\pi_{\langle c\rangle}(\langle c,\bar c\rangle))$. It contains the conic $\psi_c(\Gamma_{c,\bar c})$, the sextic $\psi_c(c')$, and the conic $c'$. Thus, the rational map $\phi_{c'}: F_g(X') \dra F_g(X)$ is the inverse of $\phi_c $, which is therefore birational.
  
  \medskip
The   section 
    of the quartic $\pi_c(X)\subset \P^4$ by the $2$-plane $ \pi_c(\langle c,\bar c\rangle)$ is the union of the conics $\pi_c(\bar c)$ and $\pi_c( \Gamma_{c,\bar c})$. Similarly, its section by the $2$-plane $ \pi_c(\langle c,\iota(\bar c)\rangle)$ is the union of the conics $\pi_c(\iota(\bar c))$ and $\pi_c( \Gamma_{c,\iota(\bar c)})$. Since $\bar c$ and $\iota(\bar c)$ together span a $3$-plane,  these two $2$-planes meet along a line that meets $\pi_c(X)$ in four points, including the two points of $\pi_c(\bar c)\cap \pi_c(\iota(\bar c))$. It follows that the intersection of the conics $\pi_c( \Gamma_{c,\bar c})$ and $\pi_c( \Gamma_{c,\iota(\bar c)})$ with this line must be the same. In particular, these conics meet in two points, hence so do their images $\phi_c([\bar c])$ and $\phi_c(\iota([\bar c]))$ in $X'$. This proves $\phi_c(\iota([\bar c]))=\iota'(\phi_c( [\bar c]) ) $.
\end{proof}

 \begin{prop}\label{con}
Let   $X$ be a general Fano threefold of type $\cX_{10}$ and let $c$ be a smooth $\tau$-conic   on $X$.
 The map $\phi_c:F_g(X)\dra F_g(X')$ sends 
 \begin{itemize}
\item the  curve of $\sigma$-conics on $X$ to the point  $[c']$;
\item the point $[c_X]$ to   $\iota'([c'])$; 
\item the point $\iota([c])$ to    $[c_{X'}]$; 
  \end{itemize}
and  factors as 
$$
 F_g(X) \lra F_m(X) \isomlra   F_m(X') \longleftarrow F_g(X') $$ 
  In other words, the surface $F_g(X')$ is isomorphic to the surface $F_m(X)$ blown up at the point $[c]$.
   \end{prop}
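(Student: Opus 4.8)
The plan is to deduce everything from the birational map $\phi_c\colon F_g(X)\dra F_g(X')$ of Proposition~\ref{prop62}, which is $\iota$-equivariant and has inverse $\phi_{c'}$. Since the elementary transformation is reversible — the elementary transformation of $X'$ along $c'$ is $\psi_c^{-1}$ — the pair $(X',c')$ is of exactly the same nature as $(X,c)$, so the second and third bullets of the Proposition for $(X,c)$ coincide with the third and second bullets of the Proposition for $(X',c')$. It is therefore enough to prove, for a general pair $(X,c)$, the two statements: (1) $\phi_c$ contracts the curve $L_\sigma$ to the point $[c']$; and (2) $\phi_c([c_X])=\iota'([c'])$. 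The factorization and the blow-up description then follow formally.

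For the formal part, recall (\S\ref{stbt}, Corollary~\ref{min}) that $F_g(X)\to F_m(X)$ and $F_g(X')\to F_m(X')$ are the contractions of the single $(-1)$-curves $L_\sigma$ and $L_{\sigma'}$ onto smooth minimal surfaces of general type. The birational map $\phi_c$ induces one between $F_m(X)$ and $F_m(X')$, which is necessarily an isomorphism; fix it, call the common surface $M$, so that $F_g(X)=\mathrm{Bl}_xM$ and $F_g(X')=\mathrm{Bl}_{x'}M$ with $x$, $x'$ the images of $L_\sigma$, $L_{\sigma'}$, and $\phi_c$ commutes with the two blow-down maps to $M$. Such a $\phi_c$ is either an isomorphism (when $x=x'$) or a single blow-up followed by the contraction of a single $(-1)$-curve; by (1) it is not an isomorphism, and (1) identifies the contracted curve as $L_\sigma$, with $\phi_c(L_\sigma)=[c']$. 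Applying (1) to $(X',c')$ — where the analogue of $c'$ is $c$ — gives $\phi_{c'}(L_{\sigma'})=[c]$, so the point $\phi_c$ blows up is $[c]$. Unwinding this yields $F_g(X)\lra F_m(X)\isomlra F_m(X')\longleftarrow F_g(X')$ with the stated compatibilities, and exhibits $F_g(X')$ as the blow-up of $F_m(X)$ at $[c]$.

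To prove (1) and (2) I would use the explicit form of $\phi_c$: if $\bar c$ is a conic disjoint from $c$, from $\iota(c)$, and from the finitely many lines meeting $c$, Proposition~\ref{prop62} gives $\phi_c([\bar c])=[\psi_c(\Gamma_{c,\bar c})]$, where $\langle c,\bar c\rangle\cap X=c+\bar c+\Gamma_{c,\bar c}$ and $\psi_c$ is realized, as in footnote~\ref{note11}, by a linear subsystem of $|\cI_{\ell_1}\otimes\cdots\otimes\cI_{\ell_{10}}\otimes\cI_{\iota(c)}^{2}\otimes\cI_c^{4}(3)|$. For (2), the conic $c_X$ is disjoint from $c$ when $c$ is general, so $\phi_c([c_X])=[\psi_c(\Gamma_{c,c_X})]$; one then analyzes $\langle c,c_X\rangle\cap X$ and the image of the residual sextic to show that $\psi_c(\Gamma_{c,c_X})$ is a conic meeting $c'$ in two points and spanning a $3$-plane with it, whence $\psi_c(\Gamma_{c,c_X})=\iota'(c')$ by Lemma~\ref{l51} applied on $X'$. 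For (1), one lets $\bar c$ vary over the general $\sigma$-conics, shows $[\psi_c(\Gamma_{c,\bar c})]$ is a single point, and identifies it with $[c']$ using the defining property of $c'$ from \S\ref{ec} (the contraction $\eps'\colon\widetilde X'\to X'$ of $E'\equiv-2K_{\widetilde X'}-\chi(E)$ onto $c'$), together with the description (\S\ref{surF}) of the union of all $\sigma$-conics on $X$ as the hyperplane section $\{[V_2]\mid V_2\cap U_3\neq 0\}\cap X$, tracking the relevant configurations through $\eps$, the flop $\chi$, and $\eps'$.

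The main obstacle is that both of these cases are degenerate for the quintic del Pezzo picture underlying Proposition~\ref{prop62}: when $\bar c$ is a $\sigma$-conic the plane $\langle\bar c\rangle$ is an $\alpha$-plane contained in $W$, and when $\bar c=c_X$ the plane $\langle\bar c\rangle=\Pi$ is a $\beta$-plane contained in $W$, so in both cases $S=G(2,V_5)\cap\langle c,\bar c\rangle$ contains a plane and is no longer a del Pezzo surface of degree $5$; the residual sextic and the base scheme of $\psi_c$ along it must then be studied by hand. Worse, the third bullet concerns the conic $\iota(c)$, which always meets $c$, so the disjoint-conic construction is entirely unavailable there; one must either deduce it from (2) via the $(X,c)\leftrightarrow(X',c')$ symmetry, as above, or run a limiting argument over a pencil of conics degenerating to $\iota(c)$ and control the flat limit of the residual sextics together with its image under $\psi_c$. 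I expect the symmetry reduction, together with the explicit birational-geometry computation for (1) and the linear-system computation for (2), to be the most economical route.
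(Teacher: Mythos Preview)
Your overall architecture---reduce to (1) $\phi_c(L_\sigma)=[c']$, use the $(X,c)\leftrightarrow(X',c')$ symmetry for the third bullet, and deduce the factorization formally from minimality of $F_m$---is correct and matches the paper. There are two substantive differences in how (1) and (2) are actually executed.

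\textbf{Statement (2) comes for free from (1).} You propose to prove $\phi_c([c_X])=\iota'([c'])$ by computing $\psi_c(\Gamma_{c,c_X})$ directly and then invoking Lemma~\ref{l51} on $X'$. This route has an obstacle you did not anticipate: the residual sextic $\Gamma_{c,c_X}$ is \emph{reducible}, equal to $c_1+c_2+\iota(c)$ where $c_1,c_2$ are the two $\sigma$-conics meeting $c$. In particular $\iota(c)$ is a component, sitting in the base locus of $\psi_c$, so the hypothesis under which the recipe $\phi_c([\bar c])=[\psi_c(\Gamma_{c,\bar c})]$ was justified in Proposition~\ref{prop62} fails outright. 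The paper avoids this entirely: once (1) is known, $\iota$-equivariance of $\phi_c$ gives $\phi_c([c_X])=\iota'\bigl(\phi_c(L_\sigma)\bigr)=\iota'([c'])$ in one line. So (2) should be handled exactly the way you already handle the third bullet, not by a separate computation.

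\textbf{Proof of (1).} Your sketch (``track configurations through $\eps$, $\chi$, $\eps'$, using the hyperplane-section description of the union of $\sigma$-conics'') does not isolate the two ideas that actually carry the argument. First, a numerical coincidence: the strict transform of $\Gamma_{c,\bar c}$ satisfies
\[
\chi^*E'\cdot \Gamma_{c,\bar c}=(-2\eps^*K_X-3E)\cdot\Gamma_{c,\bar c}=2\cdot 6-3\cdot 4=0,
\]
so it is either disjoint from $\chi^*E'$ or contained in it; producing a single point of intersection therefore finishes. Second, the explicit decomposition $\Gamma_{c,c_X}=c_1+c_2+\iota(c)$ just mentioned: each $c_i$ meets $c$ and $\iota(c)$ once, has degree $0$ for the defining linear system, and hence maps to a fibre of $E'\to c'$; an adjunction count on the complete-intersection $1$-cycle
\[
(c+\bar c+\Gamma_{c,\bar c})+(c+c_X+c_1+c_2+\iota(c))
\]
cut out in the hyperplane $\langle c,\bar c,c_X\rangle$ by a reducible quadric then forces $c_1$ to meet $\Gamma_{c,\bar c}$, giving the required contact with $\chi^*E'$. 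The hyperplane-section description of the union of $\sigma$-conics is not used.
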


 \begin{proof} We can assume that $c$ is general.  Let $\bar c$ be a general  $\sigma$-conic on $X$.  We have
$$X\cap\langle c,\bar c\rangle=c + \bar c + \Gamma_{c,\bar c},$$ where $\Gamma_{c,\bar c}$ is a smooth irreducible sextic.  To show   $\psi_c(\Gamma_{c,\bar c})=c'$, since
 $$\chi^*E'\cdot \Gamma_{c,\bar c}=(-2\eps^*K_X-3E)\cdot  \Gamma_{c,\bar c}=0,$$
  it is   enough to show that (the strict transform of) $\Gamma_{c,\bar c}$ meets the divisor $\chi^*E'$, because it will then have to be contained in it. 

 Recall   that $c$ meets exactly two $\sigma$-conics, say $c_1$ and $c_2$. Since $c_1$ meets $c$ in one point, $\iota(c)$ in one point, is not contained in the indeterminacy locus of $\psi_c$, and has degree $-2-4+3\times 2=0$ on 
 the  linear system $|\cI_{\iota(c)}^2 \otimes\cI_c^4(3)|$ that defines $\psi_c$ (see footnote \ref{note11}), it maps to a point of $c'$ on $X'$, hence to a fiber of the map 
$\eps\vert_{E'}:E'\to c'$ on $\widetilde X'$. 
It is therefore enough to show that $\Gamma_{c,\bar c}$ meets  $c_1$ outside of $c$.

Note that $c_1$ and $c_2$ both  meet both $c$, and they both meet    $c_X$  in two points. They are therefore contained in $\langle c,c_X\rangle$, and the sextic  $\Gamma_{c,c_X}$ is the union of $c_1$, $c_2$,  and a   conic   that meets $c_1$ and $c_2$ each in  one point, and $c$ in two points, hence must be $\iota(c)$. 

 The $1$-cycle 
\begin{eqnarray*}
\Gamma&=&(c + \bar c + \Gamma_{c,\bar c})+(c + c_X + \Gamma_{c,c_X})\\
&=&2c+\bar c + \Gamma_{c,\bar c}+c_X+c_1+c_2+\iota(c)
\end{eqnarray*}
 is the complete intersection in $\P_7$ of $X$ with  the hyperplane 
 $\langle c,\bar c,c_X\rangle
 $ and the reducible quadric $\langle c,\bar c \rangle\cup
 \langle c,c_X\rangle
 $, hence $\omega_\Gamma=\cO_\Gamma(2)$. In particular, $c_1$ must meet  the other components in six points. Since it meets $c_X$ twice, $c$ and $\iota(c)$ simply, and neither $\bar c$ nor $c_2$, it must meet $\Gamma_{c,\bar c}$. 
 
 We have therefore shown that $\phi_c$ is defined at   general points of the curve $L_\sigma\subset F(X)$ of  $\sigma$-conics in $X$, and that its value at these points is the point $[c']\in F(X')$. Since $\phi_c$ commutes with $\iota$, it is also defined at $[c_X]$, where it takes the value $\iota'([c'])$.

The rest of the statements follows from the symmetry $\phi_c^{-1}=\phi_{c'}$ and the facts that $\phi_c$ commutes with the involutions  and the surfaces $F_m(X)$ and $F_m(X')$ are   of general type (Corollary \ref{min}).
 \end{proof}

     \begin{coro}\label{gene}
  If $X$ is a general   Fano threefold of type $\cX_{10}$, the sheaf $\Omega_{F_m(X)}$ is generated by its global sections.
  \end{coro}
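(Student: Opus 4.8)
The plan is to combine the tangent bundle theorem of \S\ref{stbt} with the elementary conic transform of \S\ref{ec}. Recall from \S\ref{stbt} that the map $\beta: F(X)\to G(2,V_8)$ sending a conic $c$ to the line $\langle c\cap\iota(c)\rangle$ induces an isomorphism $T_{F(X)^0}\isomto (\beta^*\cS_{2,V_8})\vert_{F(X)^0}$ over the open set $F(X)^0=F(X)\moins(L_\rho\sqcup L_\sigma)$. Dualizing, $\Omega_{F(X)}$ is isomorphic over $F(X)^0$ to $\beta^*\cS_{2,V_8}^\vee$, which is globally generated on all of $G(2,V_8)$; hence $\Omega_{F_m(X)}$ is globally generated away from the images of $L_\rho$ and $L_\sigma$, i.e.\ away from the two points $[c_X]$ and $\iota([c_X])$ of $F_m(X)$ (these are the only points where the argument of \S\ref{stbt} left the question open, as recalled just after Corollary \ref{min}). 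So the whole content of the corollary is to establish global generation of $\Omega_{F_m(X)}$ at those two distinguished points, and since $\iota$ exchanges $[c_X]$ with $\iota([c_X])$ and is an isomorphism, it suffices to treat one of them, say $[c_X]$.

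The key idea is that Proposition \ref{con} identifies $F_m(X)$, near the point $[c_X]$, with the surface $F_g(X')$ near a point where $\Omega$ is already known to be globally generated. Concretely, pick a general smooth $\tau$-conic $c$ on $X$ (which exists since $X$ is general) and form the conic transform $X'=X_c$. By Proposition \ref{con}, the birational map $\phi_c:F_g(X)\dra F_g(X')$ factors through an isomorphism $F_m(X)\isomto F_m(X')$, and under this isomorphism the point $[c_X]\in F_m(X)$ goes to the image in $F_m(X')$ of the point $\iota'([c'])\in F_g(X')$, where $c'=c_{X'}$ is the $\tau$-conic on $X'$ dual to $c$. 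In particular, $[c_X]\in F_m(X)$ corresponds to an \emph{interior} point of $F_m(X')$, namely a point lying over $\iota'([c'])\in F_g(X')$, which is \emph{not} one of the two bad points $[c_{X'}]$ or $\iota'([c_{X'}])$ of $F_g(X')$ — because $\iota'([c'])$ is the $\iota'$-image of $c'=c_{X'}$, which is the $\rho$-conic of $X'$, and the bad points of $F_g(X')$ are $[c_{X'}]$ itself and its $p_1$-exceptional blow-up, not $\iota'([c_{X'}])$. Hence, by the already-established partial global generation applied to the general threefold $X'$, the sheaf $\Omega_{F_m(X')}$ is globally generated at the image of $\iota'([c'])$, and transporting this back via the isomorphism $F_m(X)\isomto F_m(X')$ shows $\Omega_{F_m(X)}$ is globally generated at $[c_X]$.

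To carry this out cleanly I would proceed in the following order. First, observe that $\Omega_{F_m(X)}$ is globally generated on $F_m(X)\moins\{[c_X],\iota([c_X])\}$ directly from the tangent bundle theorem and the global generation of $\cS_{2,V_8}^\vee$, descending along the contraction $F_g(X)\to F_m(X)$ and the quotient steps of the diagram in \S\ref{surF}. Second, apply Proposition \ref{con} to a general smooth $\tau$-conic $c$ on $X$ to get the isomorphism $F_m(X)\isomto F_m(X_c)$ and track the image of $[c_X]$; note that $X_c$ is again a general member of $\cX_{10}$ (so the first step applies to it) and that the image point is distinct from the two bad points of $F_m(X_c)$. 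Third, conclude by combining the two steps, using $\iota$ to deal with $\iota([c_X])$. The main obstacle is the bookkeeping in the second step: one must verify carefully that the image of $[c_X]$ under $\phi_c$ is indeed a point of $F_m(X_c)$ at which the first step already gives global generation — that is, that the ``defect'' point of $F_m(X)$ is moved by the conic transform to a ``good'' point of $F_m(X_c)$. This is exactly the content of the bullet points in Proposition \ref{con} (the point $[c_X]$ maps to $\iota'([c'])$, while the curve of $\sigma$-conics maps to $[c']$ and $\iota([c])$ maps to $[c_{X'}]$), so the verification reduces to checking that $\iota'([c'])$ is not among $[c_{X'}]$ and its exceptional locus, which follows from $\iota'$ being fixed-point-free on $F(X_c)$ together with the description of $p_2^{-1}(L_U)=L_\rho\sqcup L_\sigma$ in \S\ref{invo}.
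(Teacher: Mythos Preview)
Your overall strategy is the paper's: global generation away from $[c_X]$ and $\iota([c_X])$ comes from the tangent bundle theorem, and Proposition \ref{con} is used to transport those two points to ordinary points of some $F_m(X_c)$.

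There is, however, a genuine error in your bookkeeping that, taken at face value, makes the argument collapse. You write ``$c'=c_{X'}$ is the $\tau$-conic on $X'$ dual to $c$'', thereby identifying two \emph{different} conics on $X'=X_c$. In the paper's notation, $c'$ is the conic such that $\eps':\widetilde X'\to X'$ is its blow-up (diagram (\ref{dia})); it is a general $\tau$-conic. By contrast $c_{X'}$ denotes the unique $\rho$-conic on $X'$, the exact analogue of $c_X$ for $X'$. The two ``bad'' points of $F_m(X')$ are $[c_{X'}]$ and $\iota'([c_{X'}])$ (not, as you write, ``$[c_{X'}]$ and its $p_1$-exceptional blow-up''). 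With your identification $c'=c_{X'}$, the image $\iota'([c'])$ of $[c_X]$ would be $\iota'([c_{X'}])$, which \emph{is} a bad point, and nothing would be gained.

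The correction is simply to drop that identification. Since $c$ is a general $\tau$-conic on $X$, by the symmetry of diagram (\ref{dia}) the conic $c'$ is a general $\tau$-conic on $X'$; hence $[c']$ and $\iota'([c'])$ are ordinary points of $F_m(X')$, distinct from $[c_{X'}]$ and $\iota'([c_{X'}])$. The tangent bundle theorem applied to $X'$ then gives global generation of $\Omega_{F_m(X')}$ at $\iota'([c'])$, and transporting back along the isomorphism $F_m(X)\isomto F_m(X')$ of Proposition \ref{con} finishes the proof. With this fix your argument is correct and coincides with the paper's.
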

  
  \begin{proof}
  We saw in \S\ref{stbt} that $\Omega_{F_m(X)}$ is generated by global sections, except possibly at $[c_X]$ and $\iota([c_X])$. Because of Proposition \ref{con}, this holds everywhere.
  \end{proof}

We will now write $X_c$ instead of $X'$ to highlight the dependence on $c$.    For $X$ general, as $[c]$ varies in the open subset of $F(X)$ consisting of smooth $\tau$-conics, the assignment $[c]\mapsto [X_c]$ defines a rational map $F_g(X)\dra \cN_{10}$.

   \begin{theo}\label{fibb}
Let $X$  be a general Fano threefold of type $\cX_{10}$. For {\em any} conic $c\subset X$, one can define a {\em smooth} birational model $X_c$ of $X$ such that $F_g(X_c)$ is isomorphic to the surface $F_m(X)$ blown up at the point $[c]$. The assignment $[c]\mapsto [X_c]$ defines an {\em injective  morphism} $F_{m,\iota}(X)\to \cN_{10}$ whose image is a proper surface contained in a fiber of the period map $\wp:\cN_{10}\to\cA_{10}$. 
\end{theo}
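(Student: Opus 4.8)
The plan is to assemble the statement from the pieces already developed. First I would handle the construction of $X_c$ for an \emph{arbitrary} conic $c\subset X$. For smooth $\tau$-conics the threefold $X_c=X'$ was produced in \S\ref{ec} via the chain blow-up/flop/blow-down in diagram (\ref{dia}), and Proposition \ref{con} identified $F_g(X_c)$ with $F_m(X)$ blown up at $[c]$. For smooth $\sigma$-conics the same elementary-transformation machinery applies (the footnotes in \S\ref{ec} already note what $\pi_c$ does to the other $\sigma$-conics), and for $c=c_X$ (the unique $\rho$-conic) one instead uses that $\iota$ exchanges $L_\rho$ and $L_\sigma$, so $X_{c_X}$ can be \emph{defined} as $X_{c'}$ for any $\sigma$-conic $c'=\iota(c)$, or directly via the projection from $\langle c_X\rangle=\Pi$. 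In every case one must check that $\phi_{|-K_{\widetilde X}|}$ contracts only finitely many curves, which follows from Lemma \ref{l51} (only $\iota(c)$ among conics is contracted) together with the finiteness of lines meeting $c$ for $X$ general; smoothness of $X_c$ then comes from the general structure theory cited from \cite{ip} (Proposition 4.4.11). So the upshot of this step is: for every $c$, a smooth $X_c\in\cX_{10}^0$ with $F_g(X_c)\isom \mathrm{Bl}_{[c]}F_m(X)$.

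Next I would prove the map $[c]\mapsto[X_c]$ factors through $F_{m,\iota}(X)$ and is a well-defined morphism to the stack $\cN_{10}$. Factoring through $F_m(X)$: the exceptional curve $L_\rho$ (and hence $L_\sigma=\iota(L_\rho)$) is contracted because, by Proposition \ref{con}, all $\sigma$-conics map under $\phi_c$ to the single point $[c']$, i.e.\ the associated birational models are mutually isomorphic; equivalently $X_{c_1}\isom X_{c_2}$ for the various $\sigma$-conics, and similarly $X_{c_X}\isom X_{\text{any }\sigma\text{-conic}}$. Factoring through $\iota$: the symmetry ${\eps'}^*H'-E'\equiv-K_{\widetilde X'}\equiv{\phi'}^*\bar H$ observed at the end of \S\ref{ec} shows the elementary transformation is an involution on the level of pairs, so $X_c$ depends only on the complete intersection $\Gamma^1_{4,V_4}=X\cap\P(M_{V_4})$, i.e.\ on the $\iota$-orbit $\{c,\iota(c)\}$; I would spell this out by noting $c$ and $\iota(c)$ give the \emph{same} flop (same $\bar X$, same $\widetilde X$ up to the flop). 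Then $[c]\mapsto[X_c]$ descends to a morphism $F_{m,\iota}(X)\to\cN_{10}$ of stacks; well-definedness/algebraicity comes from the fact that the whole construction (Serre construction, blow-up, flop, blow-down) is relative over the base, so the family $\{X_c\}$ is an algebraic family over $F_{m,\iota}(X)$.

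For \textbf{injectivity} I would use the reconstruction: if $X_c\isom X_{c'}$ then, by the Corollary to the uniqueness of $\cE$ in \S\ref{gush}, the isomorphism is induced by an automorphism of $\P(V_5)$ preserving $W$; chasing this through $F_g(X_c)\isom\mathrm{Bl}_{[c]}F_m(X)$, the exceptional divisor over $[c]$ is canonically recovered (it is the unique $(-1)$-curve other than the images of $L_\rho,L_\sigma$, using Corollary \ref{min} that $F_m(X)$ is minimal), so an isomorphism $X_c\isom X_{c'}$ forces $[c]$ and $[c']$ to correspond under an automorphism of $F_m(X)$; for $X$ general $\Aut(X)=1$ (Theorem \ref{noauto}) and one must also rule out nontrivial automorphisms of $F_m(X)$ not coming from $X$ — here Logachev's tangent bundle theorem \S\ref{stbt} gives $\Omega_{F_m(X)}\isom\beta^*\cS_{2,V_8}$ away from $[c_X],\iota([c_X])$, pinning the surface rigidly to $X$, so $[c]=[c']$ in $F_{m,\iota}(X)$. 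That $\phi_c$ commutes with $\iota$ and $\iota'$ (Proposition \ref{prop62}) is what makes this compatible with passing to $F_{m,\iota}$.

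For the \textbf{period statement} I would argue that all $X_c$ have the same intermediate Jacobian as $X$, hence the image surface lies in a fiber of $\wp$. This follows because the birational isomorphism $\psi_c:X\dra X_c$ is, by (\ref{dia}), a composition of a blow-up of a curve ($c$, rational — a conic), a flop (isomorphism in codimension $1$, hence no change to $J$), and a blow-down to a curve ($c'$, again a conic, rational); blowing up a rational curve does not change the intermediate Jacobian as a \ppav\ (the projective bundle over a rational curve contributes nothing to $H^3$), and a flop preserves $H^3$ with its Hodge and polarization structure. So $J(X_c)\isom J(X)$ as \ppavs, i.e.\ $\wp([X_c])=\wp([X])$ is constant on the image. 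Properness of the image surface is what the theorem asserts it \emph{is} (not what we prove $\wp$ is): the image is closed in $\cN_{10}$ because $F_{m,\iota}(X)$ is projective and the morphism to the stack is proper onto its image; I would phrase it as ``the image is a closed substack.''

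The main obstacle I expect is the injectivity step combined with the arbitrary-conic construction: the elementary-transformation theory of \S\ref{ec}--\S\ref{con} is developed cleanly only for \emph{general} $\tau$-conics, and extending $X_c$ to \emph{all} conics — $\sigma$-conics, the $\rho$-conic $c_X$, and $\tau$-conics sitting over the singular locus of $F(X)$ (Remark \ref{any}, where $N_{c/X}=\cO(2)\oplus\cO(-2)$) — requires either a uniform base-change/flatness argument over $F(X)$ or a case analysis; and then recovering $[c]$ from $X_c$ up to $\iota$ requires knowing precisely which $(-1)$-curve in $F_g(X_c)$ is ``new,'' which in turn leans on the minimality of $F_m(X)$ and on excluding exotic automorphisms of $F_m(X)$. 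I would treat the general-$c$ case by a limiting argument (the map $[c]\mapsto[X_c]$ is defined on a dense open set and, the target being separated, extends over the smooth projective $F_{m,\iota}(X)$ provided no curve is contracted to a point where it shouldn't be — which Proposition \ref{con} controls along $L_\sigma$), reserving the explicit geometry only for the finitely many special conics where the limit needs to be identified.
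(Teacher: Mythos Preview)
Your outline captures the period-map step correctly and has the right shape for injectivity, but it diverges from the paper at the two critical technical points, and one of these is a genuine error.

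\medskip
\textbf{Factoring through $\iota$.} Your argument that $X_c\isom X_{\iota(c)}$ is wrong. The symmetry ${\eps'}^*H'-E'\equiv -K_{\widetilde X'}$ at the end of \S\ref{ec} says only that the transformation of $X_c$ along the conic $c'\subset X_c$ returns $X$; it does \emph{not} say that the blow-up/flop/blow-down starting from $c$ and from $\iota(c)$ coincide. They do not: $\widetilde X$ is the blow-up of $c$, not of $\iota(c)$; the projections are from different planes $\langle c\rangle\ne\langle\iota(c)\rangle$; the resulting $\bar X$'s sit in different $\P^4$'s. The isomorphism $X_c\isom X_{\iota(c)}$ is \emph{not} visible from the elementary transformation and in the paper is deduced only a posteriori (Corollary \ref{cic}) from the Reconstruction Theorem \ref{rec}: since $F_g(X_c)$ and $F_g(X_{\iota(c)})$ are both $\mathrm{Bl}_{[c]}F_m(X)\isom\mathrm{Bl}_{[\iota(c)]}F_m(X)$ (via $\iota$), the threefolds are isomorphic.

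\medskip
\textbf{Defining $X_c$ for all conics.} Your plan is to run the elementary transformation directly for every $c$, or to extend by a limit argument. The paper does neither. Instead it uses a \emph{double} conic transform: choose a \emph{general} conic $d\subset X$, form $X_d$ (well-defined since $d$ is a smooth $\tau$-conic), push $c$ forward to the conic $\phi_d(c)\subset X_d$, and set $X_c:=(X_d)_{\phi_d(c)}$. The substance of the proof is then to show that $\phi_d(c)$ is a \emph{smooth $\tau$-conic} for general $d$, so that the second transform is again of the type already analyzed; the hard case is a reducible $\tau$-conic $c=\ell\cup\ell'$, handled by an explicit argument ruling out a splitting of the residual sextic $\Gamma_{d,c}$ into two cubics. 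Independence of the auxiliary $d$ is then deduced from the Reconstruction Theorem (same Fano surface $\Rightarrow$ same threefold). Your limiting argument would need to extend a rational map from a surface to a stack across a locus that is a priori a curve (the $\sigma$-conics, $c_X$, reducible conics), and even if it extended, it would not by itself yield the specific identification $F_g(X_c)\isom\mathrm{Bl}_{[c]}F_m(X)$ at the special points.

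\medskip
For injectivity you are on the right track, but note that ``pinning $F_m(X)$ rigidly to $X$'' is exactly the content of the Reconstruction Theorem \ref{rec} together with Corollary \ref{auto} (whose proof uses, beyond the tangent bundle theorem, that $\End(J(X))=\Z$ for very general $X$). You should invoke these results rather than re-derive them.
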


We will see in Remark \ref{mmm} that this morphism actually induces an {\em isomorphism} from  $F_{m,\iota}(X) $ onto a connected component of the fiber of the period map.

\begin{proof}
 The idea is to prove that for a {\em general} conic $d\subset X$, the conic  
$$\phi_d( c )= \psi_d(\Gamma_{d,c})\subset X_d$$
is a smooth $\tau$-conic. We can then set
$$X_c = (X_d)_{\phi_d( c )}.
$$
 Note that we define the variety $X_c$, but not a particular birational map $X\dra X_c$. 
According to Proposition \ref{con}, the surface $F(X_c)$ is then isomorphic to the surface $F_m(X)$ blown-up at the point $[c]$. By the Reconstruction Theorem \ref{rec}, the isomorphism class of the variety $X_c$ is independent of the choice of $d$. This procedure therefore defines, for each general $X$, a morphism $F_g(X)\to \cN_{10}$ which, since all $\sigma$-conics have the same images in $F_m(X)$,  factors through $F_m(X)$. 

If $c$ is a $\sigma$-conic (resp. the   $\rho$-conic), we established during the proof of Proposition \ref{con}
that $ \psi_d(\Gamma_{d,c})$ is the  smooth $\tau$-conic $d'$ (resp. $\iota'(d')$). 

So we may assume that $c=\ell\cup \ell'$ is a reducible $\tau$-conic. We    proceed by contradiction, assuming that the conic $  \psi_d(\Gamma_{d,c})\subset X_d $
is  reducible. The degree-$6$ $1$-cycle $\Gamma_{d,c}$ must then split as the sum of two
 degree-$3$ $1$-cycles $\Gamma$ and $\Gamma'$,  each of which meets $d$ in two points.

We assume that $d$ is a general $\tau$-conic; more precisely, that {\em  neither $d$ nor $\iota(d)$ meets $c$,     $\iota(c)$, or any of the (finitely many) lines contained in $X$ that meet $c$.} Furthermore, if we write  (see \S\ref{pla})
   \begin{equation}\label{ll}
   \ell=\{ [V_2] \mid \langle e_1\rangle\subset V_2\subset V_3\}\quad{\rm and}\quad
\ell'=\{ [V_2] \mid \langle e'_1\rangle\subset V_2\subset V'_3\},  
   \end{equation}
   we assume that {\em the unique hyperplane   $V_4^d\subset V_5$ such that $d\subset G(2,V_4^d)$} (see \S\ref{surF}) {\em   contains neither $e_1$ nor $e'_1$.}     Note that since $c$ is a $\tau$-conic, we have
$\ell\cap \ell' =\{ e_1\wedge e'_1\}$ and $V_4^c=V_3+V'_3$. We assume that {\em the $2$-planes $\P(V_3)$ and $\P(V'_3)$ each meet the smooth quadric surface $Q_d\subset \P(V_5)$} swept out by the lines corresponding to points of $d$ (see \S\ref{pla}) {\em transversely in two points, and no two of these four points are on a line contained in $Q_d$.}

\smallskip\noindent {\it Step 1.} {\em The curve  $\Gamma$  is irreducible and meets each of the lines $\ell$ and $\ell'$ in one point.}

  If $\Gamma$ is reducible, it     contains a line $m$. If $m$  meets $\ell$ (or $\ell'$), we have by assumption $m\cap d=\vide$, hence the residual conic must meet  $m$, and   $d$ in two points: it is therefore $\iota(d)$, which is absurd since we assumed $m\cap \iota(d)=\vide$. If $\Gamma$ is reducible, it must therefore contain a smooth conic meeting $\ell$ and $\ell'$, and this conic is $\iota(c)$. The residual line must then meet $d$ in two points, which again contradicts our assumptions.

  It follows that $\Gamma$ is irreducible. Since   a cubic curve contained in $X$ cannot be bisecant to a line contained in $X$ (because the corresponding line transform would contract the cubic and this would contradict \cite{ip}, Proposition 4.3.1; see \S\ref{el}), $\Gamma$ meets each of the lines $\ell$ and $\ell'$ in one point. 
  
  \smallskip\noindent {\it Step 2.} {\em There exists a line  $\P(W_2)\subset \P(V_5)$   that meets all lines param\-etrized by $\Gamma$. It is contained in $\P(V_4^d)$ but not in the quadric $Q_d$.}

The restriction to $\Gamma$ of the   tautological subbundle  $\cS_{2,V_5}$ is isomorphic to $\cO(-1)\oplus \cO(-2)$, or to $\cO \oplus \cO(-3)$. In the latter case, all lines parametrized by $\Gamma$ pass through a fixed point. But this cannot happen since, $d$ being a $\tau$-conic, the lines in $\P(V_5)$ corresponding to the two points of $\Gamma\cap d$ are disjoint.
We are therefore in the first case, hence $\Gamma$ can be parametrized by $\gamma:t\mapsto [w(t)\wedge v(t)]$, where $w$ is linear in $t$, and $v$ is quadratic in $t$. Take for $W_2\subset V_5$ the $2$-dimensional vector space spanned  by the $ w(t)$. More precisely, we may assume $\Gamma\cap d=\{\gamma(0),\gamma(\infty)\}$ and  
$$\gamma(t)=[ (w_0+ t w_\infty)\wedge (v_0+ t v_1+ t^2 v_\infty)],$$
with $W_2=\langle w_0,w_\infty\rangle$ and $V_4^d= \langle w_0,w_\infty,v_0,v_\infty\rangle$.
 We may further assume  
 $\Gamma\cap \ell=\{\gamma(1)\} $
and
$ \Gamma\cap \ell'=\{\gamma(t_0)\} $
for some $t_0\notin\{0,1,\infty\}$. Set 
\begin{equation}\label{para}
w=w_0+ w_\infty \quad {\rm and}\quad  w'=w_0+ t_0 w_\infty,
\end{equation}
 so that $W_2=\langle w,w'\rangle$.
 Since we assumed $e_1\notin V_4^d$, we have   $e_1\notin W_2$, hence $\gamma(1) =[ e_1\wedge w]$.  Similarly, $\gamma(t_0) =[ e'_1\wedge w']$. If $\P(W_2)$ is   contained in $Q_d$,   the point $[ w]\in \P(V_5)$ must be one of the two points of $\P(V_3)\cap Q_d$, and $[ w'] $ must be one of the two points of $\P(V'_3)\cap Q_d$. Since we assumed that none of the four lines joining these points are contained in $Q_d$, this is absurd.
 
Finally, we have 
\begin{equation}\label{v1}
 v_0+ v_1+v_\infty \in\langle e_1 ,w \rangle \quad {\rm and}\quad
  v_0+ t_0v_1+t^2_0v_\infty\in\langle e'_1 ,w' \rangle, 
  \end{equation}
 hence $v_0+t_0v_\infty$ is in the hyperplane $\langle e_1,e'_1,w,w'\rangle$.
\medskip

Conversely, let us start with $[w]\in \P( V_3\cap V_4^d)$ and $[w']\in \P( V'_3\cap V_4^d)$. Assume $w\notin V'_3$ and $w'\notin V_3$, and that the line $\langle [w],[w']\rangle$ meets $Q_d$ in two points, $[w_0]$ and $[w_\infty]$. These points determine uniquely the points $[w_0\wedge v_0]$ and  $[w_\infty\wedge v_\infty]$ of $d$, together with the point $[e_1\wedge w]$ of $\ell$, and the point $[e'_1\wedge w']$ of $\ell'$.  Note that $w_0$ and $w_\infty$ are defined only up to multiplication by   nonzero scalars. But our choice of   parametrization 
of $\Gamma$ imposes $w_0+ w_\infty\in\C w$
 (see (\ref{para})),
so these scalars must be the same, and $w_0+ t_0 w_\infty\in \C w'$, which determines 
 $t_0 $ uniquely. 

Again, $v_0$ and $v_\infty$ are defined only up to multiplication by   nonzero scalars and addition of multiples of $w_0$ and $w_\infty$ respectively. But since $v_0+t_0v_\infty$ must be in the hyperplane $\langle e_1,e'_1,w,w'\rangle$, these scalars must again be the same. These changes can be achieved by noting that
  the span of 
$ w_0+ t w_\infty$ and $v_0+ t v_1+ t^2 v_\infty$ is also the span of 
$ w_0+ t w_\infty$ and $\lambda  v_0+\mu w_0+ t (\lambda  v_1+\mu w_\infty+\nu w_0)+t^2( \lambda  v_\infty + \nu w_\infty)$.

Finally,   $v_1$ is uniquely determined by   (\ref{v1}), hence the curve $\Gamma$ is unique\-ly determined by the choice 
of $[w]$ and $[w']$. Since both vary  in a projective line, 
  $\Gamma$ belongs to a two-dimensional (irreducible)  family.

Let us now look at quadrics $\Omega$ containing $c$ and $d$. Containing a given twisted cubic $\Gamma$ as above imposes three further conditions. Since $\Gamma$ varies in a two-dimensional family, a general quadric containing $d$ and $c$ contains no such cubic $\Gamma$.  This implies what we need.


Finally, we will show in Theorem \ref{rec} that the conic transforms $X_c$ and $X_d$ are isomorphic if and only if there exists an automorphism $\sigma$ of $F_m(X)$ such that $\sigma([c])=[d]$. By Corollary \ref{auto}, $\sigma$ is either trivial or $\iota$, so this proves the last statement of the theorem.  \end{proof}

\subsection{Elementary transformation along a line}\label{el} 
Let $\ell$ be a  line contained in a threefold $X$ of type $\cX_{10}^0$. We can define an elementary transformation along $\ell$ as in \S\ref{ec}.  If $\eps:\widetilde X\to X$ is the blow-up of $\ell$, with exceptional divisor $E $, projection from $\ell$ induces a birational  morphism $\phi_{|-K_{\widetilde X}|}:\widetilde X\to \P^5$, whose image $\bar X $ has degree $10$ and has terminal hypersurface  singularities. The only curves   contracted by $\phi_{|-K_{\widetilde X}|}$ are   the strict transforms of the   lines  that meet $\ell$ and, when the normal bundle to $\ell$ in $X$ is $\cO(1)\oplus\cO(-2)$, the exceptional section of the ruled surface $E\isom \F_3$; in any event, there are finitely many such curves (\cite{ip}, Proposition 4.3.1 and Corollary 4.3.2).

Performing a flop, we end up as in \S\ref{ec} with a diagram
\begin{equation}\label{dia2}
\xymatrix
{\widetilde X\ar[dd]_{\eps}\ar[dr]_\phi\ar@{-->}[rr]^\chi&&\widetilde X'\ar[dd]^{\eps' }\ar[dl]^{\phi'}\\
&\bar X\\
X\ar@{-->}[rr]^{\psi_\ell}&&X_\ell
}\end{equation}
where   (\cite{ip}, Proposition 4.3.3.(iii)):
\begin{itemize} 
\item $X_\ell$ is again a smooth Fano threefold of degree $10$ in $\P^7$;
\item $\eps'$ is the blow-up of a line  $\ell'$ in $X_\ell$, with exceptional divisor   $E'\equiv - K_{\widetilde X'}- \chi(E) $.
\end{itemize} 
If $H'$ is a hyperplane section of $X_\ell$, we have
$$\chi^*{\eps'}^*H'\equiv \chi^*(-K_{\widetilde X'}+E')\equiv \chi^*(-2K_{\widetilde X'}-\chi(E)) 
\equiv -2\eps^*K_X-3E, 
$$
hence $\psi_\ell$ is associated with a  linear subsystem of $|\cI_\ell^3(2)|$.

As 
in \S\ref{ec},  the elementary  transformation of $X_\ell$ along the line $\ell'$ is $\psi_\ell^{-1}:X_\ell\dra X$.

\section{The   period map}

\subsection{The differential of the period map}\label{diff}

Let $X$ be a Fano threefold of type $\cX_{10}^0$.  The differential of the period map $\wp:\cN_{10}\to\cA_{10}$ at the point  defined by  $X$ is the map 
$$d\wp : H^1(X,T_X)\to \Hom(H^{1,2}(X), H^{2,1}(X))$$
defined by the natural pairing $H^1(X,T_X)\otimes H^1(X,\Omega^2_X)\to
H^2(X,\Omega^1_X)$. We want to describe the kernel of $d\wp$.

\begin{theo}\label{diffper}
 The kernel of $d\wp$  is naturally identified with the quotient
$V_5/U_3$. In particular $\wp$ is smooth and its image has dimension $20$. 
\end{theo}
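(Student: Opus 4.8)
The plan is to compute the kernel of $d\wp$ via the standard Griffiths-type description of the infinitesimal variation of Hodge structure, using the resolutions of $T_X$ and $\Omega^2_X$ coming from the embedding $X=W\cap\Omega\subset W$ that were set up in \S\ref{fanox}. The pairing $H^1(X,T_X)\otimes H^1(X,\Omega^2_X)\to H^2(X,\Omega^1_X)$ is cup product followed by contraction $T_X\otimes\Omega^2_X\to\Omega^1_X$; since $T_X\simeq\Omega^2_X(1)$ and $H^{2,1}(X)=H^2(X,\Omega^1_X)$ has dimension $10$, the source $H^1(X,T_X)=H^1(X,\Omega^2_X(1))$ is $22$-dimensional, and one wants to show the kernel is exactly the $2$-dimensional space $V_5/U_3$.

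The key steps I would carry out are as follows. First, identify $H^1(X,T_X)$ and $H^1(X,\Omega^2_X)$ using the conormal sequence \eqref{conorWX} and the resulting resolution \eqref{resol} of $T_X$, together with the Koszul-type sequence $0\to\Omega^2_W(-1)\to\Omega^2_W(1)\to\Omega^2_W(1)|_X\to 0$ on $W$, exactly as in the proof of Theorem \ref{autofini}; the cohomology computations of Proposition \ref{cogr}, in particular \eqref{-1}, \eqref{-2}, \eqref{-3}, feed directly into this. Second, reinterpret the cup-product pairing on $X$ in terms of these resolutions: the map $d\wp$ factors through multiplication by the equation of $\Omega$, i.e.\ through the Jacobian-ring-type description, and the kernel is governed by the obstruction to lifting a class in $H^1(X,T_X)$ against the dual space $H^1(X,\Omega^2_X)$. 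Concretely, after chasing the two long exact sequences one is reduced to a linear-algebra statement about a map built from the inclusion $L\hookrightarrow\wedge^2V_5^\vee$ and the wedge product $\Sym^2(\wedge^2V_5^\vee)\to\wedge^4V_5^\vee\simeq V_5$ — the same map $\psi$ that appears at the end of the proof of Proposition \ref{cogr}, whose image is precisely $U_3$. Third, conclude: the kernel of $d\wp$ is the cokernel $V_5/\Im(\psi)=V_5/U_3$, which is $2$-dimensional, so $d\wp$ has rank $20$, $\wp$ is a submersion (hence smooth) at $[X]$, and its image has dimension $20$.

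The main obstacle I expect is the second step: correctly matching the geometric cup-product pairing $H^1(X,T_X)\otimes H^1(X,\Omega^2_X)\to H^2(X,\Omega^1_X)$ with the algebraic multiplication maps coming from the resolutions, and keeping track of the various twists and Serre-duality identifications so that the relevant map really is the wedge-product map $\psi$ with image $U_3$ rather than something else. This requires care because $X$ sits in $W$, not directly in projective space, so the usual Griffiths residue calculus has to be carried out relative to $W$: one must control $H^\bullet(W,\Omega^j_W(k))$ for the handful of relevant $(j,k)$ (which Proposition \ref{cogr} supplies) and verify the relevant connecting maps are the natural contraction/wedge maps. Once the pairing is correctly identified, the remaining verification that $\psi$ is injective (no rank-$2$ form in $L$) and has image $U_3$ is already done in the proof of Proposition \ref{cogr}, so the conclusion is then immediate.
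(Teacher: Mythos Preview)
Your outline assembles the right ingredients (the conormal sequence \eqref{conorWX}, the resolution \eqref{resol}, and the computations of Proposition~\ref{cogr}), but it misidentifies \emph{how} they fit together, and it is missing one essential input.

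The paper does not compute $\Ker d\wp$ directly as the cokernel of $\psi$. The map $\psi:\Sym^2L\to\wedge^4V_5^\vee$ appears only inside the proof of \eqref{-1}, where it serves to compute the cohomology group $H^3(W,\Omega^2_W(-1))\simeq V_5/U_3$; by Serre duality and the restriction sequence this identifies $H^1(X,\Omega^2_W(1)\vert_X)$ with $V_5/U_3$. That is a computation of a \emph{candidate} $2$-dimensional subspace of $H^1(X,T_X)$, not of the kernel itself. To see that $\Ker d\wp$ lies in this subspace, the paper follows Flenner's method: one builds a three-row commutative diagram whose rows are the cup-product pairings induced by the exact sequences \eqref{conorWX} and \eqref{resol}, with middle row $H^1(X,T_X)\otimes H^{2,1}\to H^{1,2}$ and bottom row $H^2(X,\Omega^1_X(-1))\otimes H^1(X,\Omega^2_X)\to H^3(X,\cO_X(-2))$. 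The substantive step is to prove that this \emph{bottom} pairing is nondegenerate in the first factor (this is where Flenner's sign lemma and the injectivity of $H^2(X,\Omega^1_X(-1))\to H^3(X,\cO_X(-3))$, hence the vanishing of $H^2(X,\Omega^1_W(-1)\vert_X)$ via \eqref{-3} and \eqref{-1}, are used). Only then does a diagram chase give the containment $\Ker d\wp\subset \mathrm{Im}\,H^1(X,\Omega^2_W(1)\vert_X)$. Your ``Jacobian-ring-type'' step 2 is too vague to substitute for this; in particular the top pairing of the diagram lands in $H^2(X,\Omega^1_W\vert_X)\simeq V_8^{\perp}\ne 0$, so one cannot simply declare that classes coming from $W$ pair to zero.

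The second gap is more serious: your argument gives no \emph{lower} bound $\dim\Ker d\wp\ge 2$. The paper does not extract this from the cohomological machinery at all; it uses the geometric input (Theorem~\ref{fibb}) that conic transforms produce a $2$-dimensional family inside the fiber of $\wp$. Without this (or an equivalent direct verification that the image of $H^1(X,\Omega^2_W(1)\vert_X)$ actually lies in $\Ker d\wp$), your step~3 ``conclude: the kernel of $d\wp$ is the cokernel $V_5/U_3$'' is unjustified: you would only have an inclusion, not an equality.
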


\begin{proof}Following \cite{flenner}, we  use the long exact sequences induced by the 
exact sequences (\ref{conorWX}) and (\ref{resol}) to get
a commutative diagram
\begin{equation}\label{ddia}
\begin{array}{ccccc}
H^1(X,\Omega^2_W(1)\vert_X) & \otimes & H^1(X,\Omega^2_X) & \to & 
H^2(X,\Omega^1_W\vert_X) \\
\downarrow &&\Vert & &\downarrow \\
H^1(X,T_X) & \otimes & H^1(X,\Omega^2_X) & \to & H^2(X,\Omega^1_X) \\
\downarrow &&\Vert & &\downarrow  \\
H^2(X,\Omega^1_X(-1)) & \otimes & H^1(X,\Omega^2_X) & \to & 
H^3(X,\cO_X(-2)) .
\end{array}\end{equation}
If we set $G=G(2,V_5)$, the kernel of the 
restriction map 
$$\wedge^2V_5^\vee=H^0(G,\cO_G(1))\to H^0(W,\cO_W(1))$$ is 
$V_8^{\perp}$. From the normal sequences of $X$ in $ W$
and $W$ in $ G$, we deduce   isomorphisms
$$H^2(X,\Omega^1_W\vert_X)\simeq H^3(W,\Omega^1_W(-2))
\simeq H^4(W,V_8^{\perp}\ot\cO_W(-3))\simeq V_8^{\perp}$$
(because $\omega_W=\cO_W(-3)$). On the other hand,
$$H^3(X,\cO_X(-2))\isom H^0(X,\cO_X(1))^\vee\simeq H^0(W,\cO_W(1))^\vee\simeq V_8.$$ 
The rightmost column of the diagram (\ref{ddia}) is therefore   exact since the top arrow is injective 
(because $H^2(X,\cO_X(-2))=0$ by Kodaira vanishing) and 
$H^2(X,\Omega^1_X)$ is $10$-dimensional. 

We check that the pairing of the bottom row of (\ref{ddia}) is nondegenerate 
with respect to the first factor. By \cite{flenner}, Lemma 2.9, 
this follows from the fact that the following diagram 
commutes up to sign:
$$\begin{array}{ccccc}
H^2(X,\Omega^1_X(-1)) & \otimes & H^1(X,\Omega^2_X) & \to & 
H^3(X,\cO_X(-2)) \\
\downarrow && \uparrow &&\Vert \\
H^3(X,\cO_X(-3)) & \otimes & H^0(X,\cO_X(1)) & \to & 
H^3(X,\cO_X(-2)).
\end{array}$$
Indeed, the last row of this diagram is Serre-dual to the 
multiplication map $H^0(X,\cO_X(1))\ot H^0(X,\cO_X(1))\to
H^0(X,\cO_X(2))$; in particular, it is nondegenerate 
with respect to the first factor, and the same conclusion 
follows for the first row because the map 
$$H^2(X,\Omega^1_X(-1))\to H^3(X,\cO_X(-3))$$
is injective. To prove that, we need to check that $H^2(X,\Omega^1_W(-1)\vert_X)$ vanishes. But this follows from the normal sequence of $X$ in $ W$, (\ref{-1}), and (\ref{-3}).

We can now conclude that the kernel of $d\wp$ at $[X]$ is 
contained in the image of $H^1(X,\Omega^2_W(1)\vert_X)$ in 
$H^1(X,T_X)$. We compute the dimension of this space. From the normal sequence of $X$ in $ W$, we get an exact sequence 
\begin{multline*}
H^1(W,\Omega^2_W(-1))\to H^1(W,\Omega^2_W(1))\\
\to H^1(X,\Omega^2_W(1)\vert_X)\to H^2(W,\Omega^2_W(-1)) 
\end{multline*}
whose extreme terms vanish by (\ref{-1}). Moreover, $H^1(W,\Omega^2_W(1))$ is dual to
$H^3(W,\Omega^2_W(-1))$, which is $2$-dimensional   by (\ref{-1}) again.

Since we already 
know that the kernel of $d\wp$ has dimension   at least $2$, 
we can conclude that this kernel is exactly 
$H^1(X,\Omega^2_W(1)\vert_X)$. We have therefore proved the theorem.\end{proof}

\begin{rema}\label{mmm}
Let $X$ be general. Recall from Theorem \ref{fibb} that conic transforms yield an injective   morphism  from the  {\em proper} surface  $F_{m,\iota}(X)$ to the fiber $\cF$ of the period map passing through the point corresponding to $X$. By  Theorem \ref{diffper}, the image is a smooth surface  and a connected component of $\cF$. Since $F_{m,\iota}(X)$ is a minimal surface, this component is actually isomorphic to $F_{m,\iota}(X)$.
\end{rema}

\begin{rema}
 Let $X$ be general. Using iterated conic and line transformations, we can construct many threefolds with the same intermediate Jacobian as $X$. However, since we know that the fibers of the period map have dimension $2$, there must be coincidences that we will know explain.

We just saw that the connected component of the fiber $\cF$ of the period map passing through the point corresponding to $X$ is a proper surface (isomorphic to) $F_{m,\iota}(X)$. 
  If $\ell$ is a line contained in $X$, we get another component $F_{m,\iota}(X_\ell)$  of $\cF$. By continuity,  this component is independent of the choice of $\ell$ (in \S\ref{mx}, we will prove directly that these surfaces are abstractly isomorphic).
Now take a conic $c\subset X$;  by the same reasoning, conic transforms of $X_c$ also land in the component $F_{m,\iota}(X)$ of $\cF$, whereas line transforms of $X_c$ land in the ``other'' component (note that we have not proved that these two components are actually distinct).  In other words,  conic transformations leave each of the two components above of $\cF$ invariant, whereas   line transformations switch them.
\end{rema}

\subsection{Fano threefolds of degree $10$ and quartic double solids}

Quartic double solids are double covers of $\P^3$ branched along a smooth quartic surface. They have Picard number $1$, index $2$, and form the family $\cY_2$ described in the introduction. They were extensively studied by Clemens in \cite{cle}, who computed, among many other things, that their intermediate Jacobian has dimension $10$.

\begin{prop}\label{explain}
A general quartic double solid  is   birational to a  $2$-dimensional family of nodal Fano threefolds which are degenerations of (smooth) Fano threefolds of type $\cX_{10}$. 
\end{prop}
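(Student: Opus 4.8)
The plan is to make the birational equivalence explicit --- on the one hand building a two-parameter family of nodal threefolds of type $\cX^0_{10}$ over a fixed quartic double solid by an elementary transformation of the kind studied in \S\S\ref{ec}--\ref{el}, and on the other recovering the quartic double solid from each such nodal threefold by projection from its node. First I would isolate the relevant degenerations: among threefolds $X=W\cap\Omega$ of type $\cX^0_{10}$, those for which $\Omega$ is chosen so that $X$ has a single ordinary double point $p$ and is otherwise general (so that $p$ lies off $\Pi$, off $c_X$ and the $\sigma$-conics, and off the finitely many lines relevant to the birational map below) form a locus of dimension $21$ in the boundary of $\cN_{10}$; since the singularity is a hypersurface node and $\cX^0_{10}$-threefolds are the complete intersections of $W$ with a quadric varying in a linear system, each such $X$ is a flat degeneration of smooth members of $\cX^0_{10}$. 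It thus suffices to prove (a) every such $1$-nodal $X$ is birational to a quartic double solid, and (b) the resulting assignment has two-dimensional fibres.

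For (a) I would pass to a small resolution $\nu\colon\widehat X\to X$ of the node (which exists in the birational category: $\widehat X$ is smooth with $\rho(\widehat X)=2$, and $\nu$ contracts a single rational curve with normal bundle $\cO(-1)^{\oplus2}$) and run the two-ray game on $\widehat X$. One extremal ray is $\nu$; the other is $K$-negative, and I expect that --- after flopping finitely many curves and one further contraction --- it realizes $\widehat X$ as birational to a double cover of a rational threefold whose discriminant is a quartic surface. Concretely, the composite of $\nu^{-1}$ with the projection of $X\subset\P_7$ from the appropriate linear subspace through $p$ should be generically two-to-one onto its image; in either description one concludes that $X$ is birational to a quartic double solid $Y=Y_2$.

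For (b), and to see the family from the side of $Y_2$, I would run the construction backwards. On a general quartic double solid $\pi\colon Y\to\P^3$ branched along a smooth quartic $B$, the surface of lines $F(Y)$ --- classically the surface of bitangent lines to $B$, whose invariants were computed by Welters (footnote \ref{pgq}) --- is two-dimensional, and every line of $Y$ occurs as a component of $\pi^{-1}(\bar L)=L\cup L'$ for a bitangent $\bar L$, with $L\cap L'$ two points. For $L$ a general such line, blowing up $L$ and running the minimal model program as in \S\S\ref{ec}--\ref{el} (a flop followed by a $K$-negative contraction) should yield a degree-$10$ Fano threefold $X_L$ of type $\cX^0_{10}$ carrying a single node, and --- arguing as for Theorem \ref{fibb} and invoking Theorem \ref{rec} --- its isomorphism class depends only on the class of $L$ in $F(Y)$ modulo a natural involution. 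The two constructions being mutually inverse (projecting $X_L$ from its node returns $(Y,L)$), the assignment $(Y,L)\mapsto X_L$ is generically finite from a variety of dimension $19+2$ onto the $21$-dimensional locus of $1$-nodal $\cX^0_{10}$-threefolds; hence a general $Y$ carries a two-dimensional family $\{X_L\}_{L\in F(Y)}$ of nodal threefolds, each a degeneration of smooth members of $\cX^0_{10}$, which is exactly the assertion.

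The hard part will be the control of the singularities along the two-ray game. On the $Y$-side one must show that blowing up a general line $L\subset Y$ produces, in the appropriate birational model, a threefold with exactly one ordinary double point --- no worse singularity, and no further nodes --- which amounts to a local normal-bundle analysis isolating the unique contracted curve (the partner line $L'$ over the bitangent $\pi(L)$) and checking that the minimal model program introduces precisely one node. Symmetrically, on the $X$-side one must verify that the projection of a general $1$-nodal threefold of type $\cX^0_{10}$ from its node is genuinely two-to-one with smooth quartic branch locus. The dimension bookkeeping --- that the construction lands in exactly the $21$-dimensional nodal locus and that the fibres are two-dimensional and not larger --- then follows from the finiteness of $\Aut(X)$, the moduli count of \S\ref{modu}, and the reconstruction theorem.
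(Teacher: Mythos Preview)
Your overall strategy---build the link between $Y_2$ and a $1$-nodal $X_{10}$ by blowing up a line on $Y$, and then observe that the $2$-parameter family comes from the surface of lines $F(Y)$---is exactly the paper's. But the specific mechanism you propose in (b) is wrong, and the machinery you layer on top is not needed.

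The paper's argument (following \cite{bcz}, \S 4.4.1 and \cite{csp}, Example 1.11) is very short. Embed $Y$ anticanonically in $\P^{10}$; the image of a line $\ell\subset Y$ is a conic, and projecting $v(Y)$ from its span $\langle v(\ell)\rangle\simeq\P^2$ lands in $\P^7$. Equivalently, blow up $\ell$ to get $\eps:\tilde Y\to Y$ with exceptional divisor $E$, and take the morphism $\phi$ given by $|-K_{\tilde Y}|=|2\eps^*\pi^*H-E|$. The image $\bar X\subset\P^7$ is already the nodal Fano threefold of degree $10$: the only curve $\phi$ contracts is the partner line $\iota(\ell)$ (the other component over the bitangent $\pi(\ell)$), producing exactly one ordinary node. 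One then cites \cite{nam} for smoothability and \cite{jah} for Picard number $1$ of the smoothing. The $2$-dimensional family is simply $F(Y)$.

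Your step (b) says that ``a flop followed by a $K$-negative contraction'' as in \S\S\ref{ec}--\ref{el} should produce the nodal $X_{10}$. It does not. In the diagram $Y\leftarrow\tilde Y\to\bar X$, the map $\tilde Y\to\bar X$ is a $K$-trivial small contraction; its flop is the \emph{other} small resolution $\tilde Y'\to\bar X$, and the second extremal ray on $\tilde Y'$ is again a divisorial contraction---to $Y$, blown down along $\iota(\ell)$. So running the full \S\ref{ec}-style link takes you from $Y$ back to $Y$; the nodal $X_{10}$ is the midpoint $\bar X$, not the endpoint. (This is also why your step (a) needs no flops: the small resolution $\widehat X\to\bar X$ \emph{is} $\tilde Y$, and its other ray contracts a divisor directly to $Y$.)

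Finally, the bidirectional bookkeeping, the invocation of Theorem~\ref{rec} (which concerns \emph{smooth} $X$), and the $21$-dimensional count are all unnecessary: once you have the construction $\ell\mapsto\bar X_\ell$, the proposition follows immediately from $\dim F(Y)=2$.
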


\begin{proof} This is a construction that can be found in \cite{bcz} \S4.4.1 and \cite{csp}, Example 1.11, to which we refer for the proofs.
  Let $\pi:Y\to \P^3$ be a quartic double solid, with involution $\iota$, and let $v:Y\to \P^{10}$ be the morphism defined by the anticanonical linear system $|-K_Y|=|2\pi^*H|$. {\em Choose any line $\ell\subset Y$} (there is a $2$-dimensional family of such lines).  The projection of $v(Y)\subset\P^{10}\dra \P^7$ from the $2$-plane spanned by the (conic) image of  $\ell $ is a singular Fano threefold   $\bar X \subset \P^7$,   obtained as the image  of the blow-up $\eps:\tilde Y\to Y$ of $\ell$, with exceptional divisor $E$,  by the morphism $\phi$ associated with the anticanonical linear system $|-K_{\tilde Y}|=|2\eps^*\pi^*H-E|$ (compare with diagram (\ref{dia})). This morphism only contracts   the curve $\iota(\ell)$ and $\bar X $ is a singular Fano threefold of degree $10$ with one node (from the family $(T_7)$  of \cite{csp}, Theorem 1.6).
  
  By   \cite{nam}, $\bar X$ can be deformed into a smooth Fano threefold of degree $10$, whose Picard number is $1$ by \cite{jah}; it is therefore of type $\cX_{10}$.
  \end{proof}
  
 Keeping the notation of the proof, lines on $Y$ map to conics on $\bar X $. This gives a map $F(Y)\dra F (\bar X)$. This explains the  coincidences between the numerical invariants of the minimal  surface   $F(Y) $  and those the minimal model of   $F(X_{10})$ (see footnote \ref{pgq}).

\begin{coro}\label{simple}
 The endomorphism ring of the intermediate Jacobian of a very  general Fano threefold   of type  $\cX_{10}$ is isomorphic to $\Z$.
\end{coro}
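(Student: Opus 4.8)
The strategy is to transfer a known Noether--Lefschetz-type statement for quartic double solids to threefolds of type $\cX_{10}$ via the degeneration of Proposition \ref{explain}. Fix a very general $X$ of type $\cX_{10}$. The key point is that $J(X)$, and hence its endomorphism ring, varies in a family, so it suffices to exhibit \emph{one} threefold in the closure of $\cX_{10}$ whose intermediate Jacobian has endomorphism ring $\Z$ and then invoke a specialization/monodromy argument. First I would recall that for a \emph{very general} quartic double solid $Y\in\cY_2$, the intermediate Jacobian $J(Y)$ is a simple principally polarized abelian variety with $\End(J(Y))\isom\Z$; this follows from the fact that $\cY_2$ is $19$-dimensional, the period map $\cY_2\to\cA_{10}$ is dominant onto a subvariety whose very general point has trivial endomorphism ring (one can cite Clemens \cite{cle} together with a standard argument: a Hodge-theoretic locus where extra endomorphisms appear is a countable union of proper closed subsets, so the very general member avoids all of them, the only thing to check being that \emph{some} $Y$ has $\End(J(Y))=\Z$, which in turn follows from the infinitesimal Torelli / maximality of the monodromy representation on $H^3(Y)$).

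\textbf{Transfer via the degeneration.} By Proposition \ref{explain}, a general $Y\in\cY_2$ is birational to a nodal Fano threefold $\bar X$ of degree $10$, and this birational map is obtained by blowing up a line $\ell\subset Y$ and contracting $\iota(\ell)$ --- a sequence of blow-ups and blow-downs of smooth rational curves. Such operations do not change the intermediate Jacobian as a principally polarized abelian variety (the blow-up of a smooth curve $C$ in a smooth threefold $Z$ has $J(\mathrm{Bl}_C Z)\isom J(Z)\times J(C)$, and here all the curves involved are rational, so $J(C)=0$; contracting $\iota(\ell)$ likewise leaves $J$ unchanged, one works on a common smooth model). Hence $J(\bar X)\isom J(Y)$ as \ppavs, and in particular $\End(J(\bar X))\isom\Z$ for $Y$ general. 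Moreover $\bar X$ is, by \cite{nam} and \cite{jah} as quoted in the proof of Proposition \ref{explain}, a one-nodal degeneration of smooth threefolds of type $\cX_{10}$; the intermediate Jacobian is insensitive to the single node (a small resolution replaces $H^3$ by the same Hodge structure up to the class of the exceptional $\P^1$, which contributes nothing to the abelian variety), so $J(\bar X)$ lies in the closure of the locus $\{[J(X)] : X\in\cX_{10}^0\}$ inside $\cA_{10}$.

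\textbf{Conclusion.} Now run the standard very-general argument \emph{inside} $\cN_{10}$. The condition $\End(J(X))\supsetneq\Z$ cuts out on $\cN_{10}$ (equivalently, on a suitable étale cover carrying a polarized variation of Hodge structure) a countable union $\bigcup_i Z_i$ of proper closed analytic subsets, because having an extra endomorphism is a Hodge-theoretic condition and the parameter space is irreducible by the discussion in \S\ref{modu}. If $\bigcup_i Z_i$ were all of $\cN_{10}$, then by irreducibility some $Z_i$ would equal $\cN_{10}$, i.e. \emph{every} smooth $X$ of type $\cX_{10}$, and by continuity of the intermediate Jacobian every threefold in the closure of $\cX_{10}^0$ in the appropriate moduli stack --- in particular the nodal $\bar X$ above --- would have $\End(J)\supsetneq\Z$, contradicting $\End(J(\bar X))\isom\Z$. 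Therefore $\bigcup_i Z_i\subsetneq\cN_{10}$, and a very general $X$ of type $\cX_{10}$ satisfies $\End(J(X))\isom\Z$.

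\textbf{Main obstacle.} The genuinely delicate point is the very first one: establishing that the endomorphism ring is $\Z$ for the very general quartic double solid (or, equivalently, for \emph{one} well-chosen member of a family mapping into the same locus). This is where one must either quote a precise result from the literature on the monodromy/Hodge theory of $\cY_2$ or supply an independent argument --- for instance, specializing further to a quartic double solid whose period point is known to have trivial endomorphism ring, or using the big monodromy of the family of quartic surfaces. Everything after that --- the birational invariance of $J$ under blow-ups of rational curves, the behaviour under the nodal degeneration, and the countable-union argument --- is routine.
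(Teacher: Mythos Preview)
Your overall strategy---degenerate to a quartic double solid and invoke a specialization argument---is the same as the paper's, and the auxiliary steps (birational invariance of $J$ under blow-ups of rational curves, the countable-union argument) are fine. The one substantive difference is exactly the point you flag as the ``main obstacle'': you stop at the quartic double solid $Y$ and need $\End(J(Y))\isom\Z$ for very general $Y$, a fact you do not prove and for which you only gesture at possible references.

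The paper does not assume this. Instead it degenerates \emph{once more}: by Clemens \cite{cle}, Theorem~(5.67), the intermediate Jacobian of a quartic double solid itself degenerates to a semi-abelian variety which is a $\C^*$-extension of the Jacobian of a complete intersection curve $C\subset\P^3$ of bidegree $(2,4)$, with extension class the difference of the two $g^1_4$'s on $C$. For such curves, $\End(J(C))\isom\Z$ is known by Ciliberto--van der Geer \cite{cvdg}, and the extension class has infinite order in general. This two-step degeneration lands on a concrete, citable result about Jacobians of curves rather than an (unstated) Noether--Lefschetz theorem for $\cY_2$. Your argument becomes complete if you insert this second degeneration; as written, it is a correct outline with the key input left as a black box.
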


\begin{proof} 
By Proposition \ref{explain}, the intermediate Jacobian of a quartic double solid is a degeneration of  intermediate Jacobians of Fano threefolds of type $\cX_{10}$. By \cite{cle}, Theorem (5.67),   intermediate Jacobians of  quartic double solids   degenerate in turn to  semi-abelian varieties which are extensions by $\C^*$ of Jacobians of complete intersection curves $C\subset\P^3$ of bidegree $(2,4)$, where the extension class $\eps$ is the difference between the two $g^1_4$ on $C$ (\cite{cle}, end of \S7). In general, the endomorphism ring of $J(C)$ is trivial (\cite{cvdg}) and $\eps$ has infinite order. This proves the corollary.
\end{proof}

\begin{coro}\label{notiso}
 A general Fano threefold   of type  $\cX_{10}$ is {\em not} birationally isomorphic to a smooth quartic double solid.
\end{coro}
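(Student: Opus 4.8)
The plan is to derive Corollary~\ref{notiso} as a formal consequence of the Torelli-type information packaged in the preceding results, using the intermediate Jacobian as the obstruction. Suppose, for contradiction, that a general $X$ of type $\cX_{10}$ were birational to a smooth quartic double solid $Y\in\cY_2$. Since $X$ and $Y$ are both smooth projective threefolds with $H^1=0$, a birational isomorphism between them induces an isomorphism of principally polarized abelian varieties $J(X)\isom J(Y)$ (Clemens--Griffiths: the intermediate Jacobian, together with its theta divisor up to translation, is a birational invariant of smooth projective threefolds with vanishing irregularity, obtained by blowing up curves and points and tracking the induced decompositions of the intermediate Jacobian). Both are $10$-dimensional, so there is no dimension obstruction; the obstruction must instead come from finer structure.

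The key step is to play off Corollary~\ref{simple} against the known endomorphism structure of $J(Y)$ for $Y$ a quartic double solid. By Corollary~\ref{simple}, $\End(J(X))\isom\Z$ for $X$ very general of type $\cX_{10}$; since "very general" is enough (a general birational claim only needs to fail outside a countable union of proper subvarieties, or one simply restricts to the very general locus, which is still the generic point for the purposes of the statement), we may assume $\End(J(X))=\Z$. On the other hand, I would invoke the fact, due to Clemens (\cite{cle}) and Debarre, that the intermediate Jacobian of a \emph{general} quartic double solid is \emph{not} simple as a complex abelian variety in the relevant sense, or more precisely that $J(Y)$ does not have trivial endomorphism ring / does not satisfy the same genericity property --- in fact the cleanest route is that $Y_2$ carries extra structure: the involution $\iota$ on $Y$ induces a nontrivial endomorphism of $J(Y)$, or the known decomposition/period properties of $J(Y_2)$ contradict $\End=\Z$. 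Wait --- I must be careful here, since Proposition~\ref{explain} says $J(Y_2)$ is a \emph{degeneration} of $J(\cX_{10})$, so these lattices are very close; the contradiction cannot be of the crude type "$\End$ differs generically". Instead I would argue as follows: if \emph{every} general $X$ were birational to \emph{some} $Y_2$, then since both families have the same dimension ($22$ for $\cN_{10}$, and $\dim\cY_2 = 34 - \dim\Aut = 19$... these differ), the period map $\wp$ restricted to $\cX_{10}$ and the period map for $\cY_2$ would have images with $\dim \wp(\cY_2) \le 19 < 20 = \dim\wp(\cN_{10})$ by Theorem~\ref{diffper}; hence a general $[X]\in\cN_{10}$ has $[J(X)]$ outside the (at most $19$-dimensional) locus $\wp(\cY_2)$, so $J(X)\not\isom J(Y)$ for any quartic double solid $Y$. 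This is the real argument, and it is clean.

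So concretely the steps are: (1) recall that a birational isomorphism of smooth threefolds with $q=0$ induces an isomorphism of principally polarized intermediate Jacobians (Clemens--Griffiths); (2) by Theorem~\ref{diffper}, $\wp:\cN_{10}\to\cA_{10}$ has $20$-dimensional image, so the image of the period locus of $\cX_{10}^0$ in $\cA_{10}$ has dimension $20$ (and the Gushel sublocus, being lower-dimensional, does not affect the general member); (3) compute that the family $\cY_2$ of quartic double solids depends on $34-19=15$... let me recompute: quartic surfaces in $\P^3$ form a $\binom{7}{3}-1 = 34$-dimensional family, modulo $\PGL(4)$ of dimension $15$, giving $19$ moduli; hence the period locus of quartic double solids in $\cA_{10}$ has dimension at most $19$; (4) therefore a general $[X]\in\cN_{10}$ has $[J(X)]$ not in this $\le 19$-dimensional locus, so $J(X)$ is not isomorphic as a \ppav\ to the intermediate Jacobian of any quartic double solid; (5) combine (1) and (4) to conclude that a general $X$ of type $\cX_{10}$ is not birational to any smooth quartic double solid. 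The main obstacle is step (4)'s rigor: one must know that the period map for quartic double solids is generically finite (which is the Torelli theorem for $Y_2$, cited in the introduction via \cite{voi}) so that its image genuinely has dimension $19$ and not more --- but in fact for the inequality we only need the \emph{easy} direction, namely that the image has dimension \emph{at most} $19$, which is automatic since $\cY_2$ itself has dimension $19$; no Torelli input is needed for the contradiction. Thus the proof is essentially a dimension count once Clemens--Griffiths birational invariance is in hand.

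\begin{proof}[Proof of Corollary~\ref{notiso}]
Suppose a general Fano threefold $X$ of type $\cX_{10}$ were birationally isomorphic to a smooth quartic double solid $Y$. Both are smooth projective threefolds with $H^{1}(\cdot,\cO)=0$, so by Clemens--Griffiths the birational isomorphism induces an isomorphism of principally polarized abelian varieties $J(X)\isom J(Y)$. Now the quartic double solids form the family $\cY_2$, which depends on $34-15=19$ parameters (quartic surfaces in $\P^3$ modulo $\PGL(4,\C)$), so the image of the period map of $\cY_2$ in $\cA_{10}$ has dimension at most $19$. On the other hand, by Theorem~\ref{diffper} the period map $\wp:\cN_{10}\to\cA_{10}$ has $20$-dimensional image, and the sublocus of Gushel threefolds, depending on only $19$ parameters, cannot dominate it; hence for $X$ general of type $\cX_{10}^0$ the point $[J(X)]$ lies outside the $\le 19$-dimensional period locus of $\cY_2$. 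Therefore $J(X)$ is not isomorphic, as a principally polarized abelian variety, to the intermediate Jacobian of any smooth quartic double solid, contradicting $J(X)\isom J(Y)$. This proves the corollary.
\end{proof}
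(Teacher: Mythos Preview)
Your overall strategy---the $20$ versus $19$ dimension count for the period loci---is exactly the paper's, but the proof has a genuine gap at the first step. It is \emph{not} true that a birational isomorphism between smooth projective threefolds with $q=0$ induces an isomorphism $J(X)\isom J(Y)$ of principally polarized intermediate Jacobians. The blow-up of a curve $C$ of positive genus in $\P^3$ already shows this: the blow-up has $J\isom J(C)$ while $J(\P^3)=0$. What Clemens--Griffiths actually give is that the \emph{Griffiths component} $J_G$ (the product of the indecomposable p.p.\ factors that are not Jacobians of curves) is a birational invariant; a priori $J(X)$ and $J(Y)$ could differ by Jacobian factors on either side.

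So before you can invoke Theorem~\ref{diffper} and run the dimension count, you must show $J_G(X)=J(X)$ (and $J_G(Y)=J(Y)$), i.e.\ that neither intermediate Jacobian has a Jacobian-of-curve factor. This is precisely the content the paper supplies and you omit: using Proposition~\ref{explain} (and the degeneration in the proof of Corollary~\ref{simple}), $J(X)$ degenerates to the intermediate Jacobian of a general quartic double solid, whose theta divisor has singular locus of codimension $5$ by \cite{deb}. By semicontinuity the same lower bound holds for the theta divisor of $J(X)$; since the theta divisor of a Jacobian of a curve has singular locus of codimension $\le 4$, no such factor can occur, and $J_G(X)=J(X)$. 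The equality $J_G(Y)=J(Y)$ follows directly from \cite{deb}. With this in hand your dimension count goes through verbatim and coincides with the paper's argument.
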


\begin{proof} 
Let $X$ be a general Fano threefold   of type  $\cX_{10}$.
  It was observed by Clemens and Griffiths (\cite{cg}; see also \cite{ip}, \S8.1) that  the Griffiths component $J_G(X)$ (the product of the principally polarized factors of $J(X)$ that are {\em not} Jacobians of curves) is a birational invariant of $X$. 
 
 It follows from the proof of Corollary \ref{simple}   that $J(X)$ can  degenerate to the intermediate Jacobian of a general quartic double solid, whose theta divisor has a singular locus of codimension $5$ (\cite{deb}, th. (8.1)). It follows that the singular locus of the theta divisor of $J(X)$ has codimension $\ge 5$, and in particular, $J_G(X)=J(X)$. 
 
  Assume now that    $X$   is birationally isomorphic to a smooth quartic double solid $Y$. We then have $J(Y)=J_G(Y)\isom J_G(X)=J(X)$. But this is impossible, since the $J(X)$ form a $20$-dimensional family (Theorem \ref{diffper}), whereas  quartic double solids  form a $19$-dimensional family.\end{proof}

\section{The moduli space $\cM_X(2;1,5)$}\label{mx}

Let $X$ be {\em any} threefold  of type $\cX_{10}^0$. We study the moduli space $\cM_X(2;1,5)$ of  semistable rank-$2$ torsion-free sheaves on $X$, with Chern numbers $c_1=1$, $c_2=5$, and $c_3=0$, 
along the lines of \cite{pf}. A noteworthy difference is that 
a vector bundle in this moduli space is not necessarily 
generated by its global sections. Our treatment of the non globally 
generated bundles is directly inspired by \cite{bf}. The main result of this section is Theorem \ref{th82}.

\medskip
\noindent {\it First remarks}. 
Let $\cE$ be a semistable rank-$2$ vector bundle on $X$, with Chern 
numbers $c_1=1$ and $c_2=5$. By the  Riemann-Roch theorem,
we have
$$\chi(X,\cE nd(\cE))=4\chi(\cO_X)-\frac{1}{2}c_1(X)(c_1(\cE)^2-4c_2(\cE))=-1.$$

We claim that $\cE$ has nonzero sections. 
Indeed $\chi(X,\cE)=4$ and $H^3(X,\cE)=0$ by
Serre duality. So we just need to prove that $H^2(X,\cE) $ vanishes, 
which can be proved as in \cite{pf}, Lemma 5.1.

Restrict $\cE$ to a general hyperplane section $S$ of $X$, a K3
surface  of degree $10$ with $\Pic(S)=\Z[\cO_S(1)]$.
We have $\chi(S,\cE\vert_S)=4$ and $H^2(S,\cE\vert_S)=H^1(S,\cE\vert_S^\vee)=0$,
hence $h^0(S,\cE\vert_S )\ge 4$. Choosing a nonzero section $s$ of 
$\cE\vert_S$, we get an extension
$$ 0 \to\cO_S \stackrel{{}\cdot s}{\to} \cE\vert_S \to\cI_Z(1)\to 0, $$
where $Z$ is a zero-dimensional scheme of length $4$. Since $\cE\vert_S$ is 
locally free, we have  $h^1(S,\cI_Z(1))\ge 1$, and there is in fact  
equality since otherwise $Z$ would generate
a line contained in $S$, which is impossible. Looking at the associated 
long exact sequence, we get
$H^1(S,\cE\vert_S)=0$, thus $h^0(S,\cE\vert_S)=4$. Back to $X$, 
we obtain $H^1(X,\cE)=0$ and $h^0(X,\cE)=4$. Moreover, 
$\cE$ is globally generated in codimension two. 

\medskip\noindent {\it Non locally free sheaves.} 
Let $\cF$ be a semistable rank-$2$ torsion-free sheaf  on $X$
with Chern numbers $c_1=1$, $c_2=5$, and $c_3=0$, which is {\em not} 
locally free. We claim that there
exists a unique line $\ell$ in $X$ such that $\cF$ fits in an exact sequence
$$0\to \cF\to T_X^\vee\to\cO_{\ell}\to 0. $$
This is the same statement as Proposition 5.11 in \cite{pf} and the proof 
is the same: the bidual $\cG=\cF^{\vee\vee}$ is locally free outside a 
finite set, so its restriction to a general
K3 section $S$ is locally free,   the quotient $ (\cG/\cF)\vert_S$ has finite 
support, and  $\len((\cG/\cF)\vert_S)=5-c_2(\cG\vert_S)$.  Moreover $\cG\vert_S$ is 
semistable and the moduli space of simple sheaves on $S$ is smooth at 
$\cG\vert_S$, of dimension $4-4\len((\cG/\cF)\vert_S)$. The same arguments as in 
\cite{pf} yields  that the length of  $ (\cG/\cF)\vert_S $ is nonzero,
hence must in fact be $ 1$. This means
that $ \cG/\cF $ is supported on a line $\ell$ plus possibly a finite set 
of points.

Moreover  $\cG\vert_S$, being semistable with Chern numbers $c_1=1$ and $c_2=4$, 
must be the restriction of $T_X^\vee$ to $S$. Finally, exactly as in 
\cite{pf}, $\cG\isom T_X^\vee$ and $\cG/\cF\isom\cO_{\ell}$.

\medskip
\noindent {\it Non globally generated bundles.}
Now, let $\cE$ be a semistable rank-$2$ vector bundle  on $X$,
with Chern numbers $c_1=1$ and $c_2=5$, which is {\em not} 
generated by global sections. We follow the arguments of \cite{bf}, 
Lemma 6.12. Denote by $\cI\subset \cE$ the image 
of the evaluation map $H^0(X,\cE)\ot \cO_X\to \cE$ and by $\cK$ its kernel. 
Set also $\cT=\cE/\cI$, a nonzero sheaf supported in codimension two. 
The torsion-free sheaf $\cI$ is stable of rank two, with $c_1(\cI)=1$ and 
$c_2(\cI)\ge 5$. The reflexive sheaf $\cK$ is also stable of rank two, 
with $c_1(\cK)=-1$. By restricting to a general  K3 section and applying 
Mukai's formula for the dimension of the moduli space, we get 
$c_2(\cK)\ge 4$, hence $c_2(\cI)=10-c_2(\cK)\le 6$. 
 
\smallskip
Suppose $c_2(\cK)=5$; then  $c(\cT)=1+c_3(\cK)$ and, by 
Riemann-Roch, $\ell(\cT)=c_3(\cK)/2\le 0$. So $\cT=0$, a 
contradiction. 

So $c_2(\cK)=4$. But the dual sheaf $\cK^\vee$ must then  be the 
dual $T_X^\vee$ of the tautological sheaf. Finally, a 
computation yields $\chi(\cT(k))=k$. Since $H^0(X,\cT)=0$, 
this implies   $\cT=\cO_\ell(-1)$ for some line $\ell$
in $X$. We get an exact sequence
$$0\to T_X\to H^0(X,\cE)\ot \cO_X\to \cE\to \cO_\ell(-1)\to 0.$$
Note that if we apply the functor ${\mathcal Hom}(.,\cO_X)$, we get 
the dual sequence
$$0\to \cE^\vee\to H^0(X,\cE)^\vee\ot \cO_X\to T_X^\vee\to \cO_\ell\to 0.$$
This means that $\cE^\vee$ can be identified with the kernel of the 
evaluation map $H^0(X,\cF)\ot \cO_X\to \cF$ of the non locally free
sheaf $[\cF]\in \cM_X(2;1,5)$ defined by $\ell$. In particular, 
$\cE$ is uniquely defined by $\ell$, and $H^0(X,\cE)\simeq H^0(X,\cF)^\vee$. 

\medskip
\noindent {\it An involution on the moduli space.}
Let $[\cE]\in \cM_X(2;1,5)$ be a globally generated vector bundle. 
Observe that the kernel $\cK$ of the evaluation map $H^0(X,\cE)\ot\cO_X\to \cE$
is a rank-$2$ vector bundle with $c_1 =-1$ and $c_2 =5$, with no nonzero
global sections. Its dual $\iota \cE $ is therefore stable, 
with Chern numbers $c_1(\iota \cE)=1$ and $c_2(\iota \cE)=5$. Moreover, 
$\iota \cE$ is globally generated and we have a natural identification 
$H^0(X,\iota \cE)\simeq H^0(X,\cE)^\vee$.

More precisely, taking global sections in the sequence 
above, we get an exact sequence of vector spaces
$$0\to H^0(X,\cE)^\vee\to H^0(X,T_X^\vee)\simeq V_5^\vee\to V_1^{\vee}\to 0,$$
where   $V_1\subset V_5$ is the ``vertex'' of the line $\ell\subset X$ (\S\ref{pla}). 
We can thus identify $H^0(X,\cE)$ with $V_5/V_1$, hence $\P (H^0(X,\cE))$ 
with the set of $V_2\subset V_5$ containing $V_1$. Geometrically, 
the codimension-$2$ Schubert cycle $\sigma_{20}(V_2)\subset G(2,V_5)$
meets $X$ along a $1$-cycle of the form $\ell+\Gamma^1_5$,
where $\Gamma^1_5$ is an elliptic quintic bisecant to $\ell$. This cycle 
is the zero-locus of the section of $\cE$ defined (up to scalars)
by $V_2$.

\begin{prop}\label{pro81}
Let $X$ be a Fano threefold of type $\cX_{10}^0$ and let $\ell\subset X$ be a line. There is a birational isomorphism 
$$\phi_\ell:\cM_X(2;1,5)  \dra F (X_\ell)$$ which
is compatible with the   involutions on $\cM_X(2;1,5)$
and $F (X_\ell)$.
\end{prop}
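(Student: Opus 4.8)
The plan is to realize $\phi_\ell$ and its inverse geometrically, transporting curves across the line transform $\psi_\ell:X\dra X_\ell$ of \S\ref{el} and using Serre's construction to pass between curves on $X$ and rank-$2$ sheaves.

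First I would build the map in the direction $F(X_\ell)\dra\cM_X(2;1,5)$. Let $(c',V_4')$ be a general point of $F(X_\ell)$: $c'\subset X_\ell$ is a smooth conic spanning $G(2,V_4')$, disjoint from the line $\ell'\subset X_\ell$ and from the finitely many lines of $X_\ell$ meeting $\ell'$. The inverse line transform $\psi_\ell^{-1}:X_\ell\dra X$, given by a subsystem of $|\cI_{\ell'}^3(2)|$, carries $c'$ to a curve $\Gamma\subset X$; a standard intersection computation on the flop (\ref{dia2}), using $\chi^*{\eps'}^*H'\equiv-2\eps^*K_X-3E$, shows $\Gamma$ is a rational quartic bisecant to $\ell$. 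Then $\ell\cup\Gamma$ is a connected curve of degree $5$ and arithmetic genus $1$, with trivial dualizing sheaf, so Serre's construction yields a rank-$2$ sheaf $\cE$ with $0\to\cO_X\to\cE\to\cI_{\ell\cup\Gamma}(1)\to 0$ and $c_1(\cE)=1$, $c_2(\cE)=5$; stability of $\cE$ follows as in \cite{pf} from $\Pic(X)=\Z$, so $[\cE]\in\cM_X(2;1,5)$.

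For $\phi_\ell$ itself, take a general $[\cE]\in\cM_X(2;1,5)$, which by the ``First remarks'' is globally generated with $h^0(X,\cE)=4$. From $0\to\cE\ot\cI_\ell\to\cE\to\cE\vert_\ell\to 0$ and $\cE\vert_\ell\isom\cO_\ell\oplus\cO_\ell(1)$ (general $\cE$) one gets $h^0(X,\cE\ot\cI_\ell)=1$, so there is a section $s_\ell$, unique up to scalars, vanishing along $\ell$; writing $Z(s_\ell)=\ell\cup\Gamma$, the residual curve $\Gamma$ has degree $4$, and a genus count together with a short argument ruling out reducible configurations shows $\Gamma$ is an irreducible rational quartic bisecant to $\ell$. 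Then $\psi_\ell(\Gamma)$ is a smooth conic $c'$ on $X_\ell$, contained in a unique $G(2,V_4')$, and I set $\phi_\ell([\cE])=(c',V_4')$. That $\phi_\ell$ and the map of the previous paragraph are mutually inverse is then formal — the Serre section of $\cE$ vanishes along $\ell\cup\Gamma$, hence equals $s_\ell$ by uniqueness, and $\psi_\ell\circ\psi_\ell^{-1}=\Id$ — so $\phi_\ell$ is birational; in particular, since $F(X_\ell)$ is an irreducible surface (Corollary \ref{cor83}), $\cM_X(2;1,5)$ is birational to a surface.

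It remains to check compatibility with the involutions. On $F(X_\ell)$, $\iota'(c')$ is the residual conic of $c'$ in the elliptic quartic $X_\ell\cap\P(M_{V_4'})$; on $\cM_X(2;1,5)$, $\iota\cE$ is the dual of $\cK=\ker\bigl(H^0(X,\cE)\ot\cO_X\to\cE\bigr)$. A section of $\iota\cE$ is a linear form on $H^0(X,\cE)$, and its zero scheme contains $\ell$ exactly when it kills the subspace $P\subset H^0(X,\cE)$ spanned by the planes $\ker(\mathrm{ev}_x)$, $x\in\ell$; for general $\cE$ this $P$ is a hyperplane, so there is a unique such form $t_\ell$, with $Z(t_\ell)=\ell\cup\Gamma^\iota$ for a residual quartic $\Gamma^\iota$. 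By uniqueness applied to $\iota\cE$ one gets $\phi_\ell(\iota\cE)=\psi_\ell(\Gamma^\iota)$, and then I would identify $\psi_\ell(\Gamma^\iota)$ with $\iota'(c')$ by transporting the Lemma \ref{l51}-characterization of $\iota'$ to $X_\ell$: the conics $\psi_\ell(\Gamma)$ and $\psi_\ell(\Gamma^\iota)$ span a $\P^3$ and meet in two points, hence span $\P(M_{V_4'})$. The crux here is the linkage relating $Z(s_\ell)$ and $Z(t_\ell)$ on $X$ — concretely, that $\Gamma$, $\Gamma^\iota$ and $\ell$ fit into the complete intersection on $X$ obtained by $\psi_\ell^{-1}$ from the two quadrics cutting out $X_\ell\cap\P(M_{V_4'})$ — and I expect this to be the main obstacle; the auxiliary verifications (the vanishings giving $h^0(X,\cE\ot\cI_\ell)=1$ and $\dim P=3$, irreducibility of $\Gamma$, that $\psi_\ell(\Gamma)$ is a genuine smooth conic with well-defined $V_4'$, and the exclusion of degenerate conics) are of the type handled in \cite{pf} and \cite{bf} and should adapt with minor changes.
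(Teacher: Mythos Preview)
Your construction of $\phi_\ell$ and its inverse is essentially the paper's: restrict a globally generated $[\cE]$ to $\ell$, use global generation to force the surjectivity of $H^0(X,\cE)\to H^0(\ell,\cE\vert_\ell)\simeq\C^3$ (hence $h^0(X,\cI_\ell\ot\cE)=1$), get the residual rational quartic $\Gamma^0_4$ bisecant to $\ell$, and push it across $\psi_\ell$; conversely, pull back a general conic and apply Serre. One small point: your appeal to Corollary~\ref{cor83} is circular, since that corollary is deduced from Proposition~\ref{pro81} and Theorem~\ref{th82}; but you do not actually need irreducibility of $F(X_\ell)$ to conclude birationality.

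Where your proposal diverges from the paper, and where it has a genuine gap, is the involution compatibility. Your plan is to show that the two residual quartics $\Gamma$ and $\Gamma^\iota$ satisfy a linkage on $X$ inherited from the elliptic quartic $X_\ell\cap\P(M_{V_4'})$ on $X_\ell$. But the $V_4'$ you have is the one determined by $c'=\psi_\ell(\Gamma)$, so the second conic in that elliptic quartic is \emph{by definition} $\iota'(c')$; showing its $\psi_\ell$-preimage equals $\Gamma^\iota$ is precisely the statement you are after. As written, the linkage argument assumes what it is meant to prove, and I do not see an independent algebraic relation between $Z(s_\ell)$ and $Z(t_\ell)$ that would drive it.

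The paper's route avoids this entirely and is worth knowing. From $0\to(\iota\cE)^\vee\to H^0(X,\cE)\ot\cO_X\to\cE\to 0$, the section $s$ of $\cE$ (with zero locus $\ell\cup\Gamma$) yields, on $\Gamma$, a nowhere-vanishing section $\sigma$ of $(\iota\cE)^\vee\vert_\Gamma$, hence an exact sequence $0\to\cO_\Gamma\to(\iota\cE)^\vee\vert_\Gamma\to\cL^\vee\to 0$ with $\deg\cL=4$. Since $s'$ (the distinguished section of $\iota\cE$ vanishing on $\ell$) vanishes at a point of $\Gamma\cap\ell$, the pairing $\langle\sigma,s'\vert_\Gamma\rangle\in H^0(\Gamma,\cO_\Gamma)$ is identically zero, so $s'\vert_\Gamma$ is actually a section of $\cL$ and therefore has exactly four zeros on $\Gamma$: the two points of $\Gamma\cap\ell$ and two further points, which must lie on $\Gamma^\iota$. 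Thus $\Gamma$ and $\Gamma^\iota$ meet in two points away from $\ell$, hence so do their images $\psi_\ell(\Gamma)$ and $\psi_\ell(\Gamma^\iota)$ in $X_\ell$, and Lemma~\ref{l51} finishes. This replaces your unspecified linkage by a short degree count on a line bundle over $\Gamma\simeq\P^1$.
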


\begin{proof} Let $[\cE]\in \cM_X(2;1,5)$ be a globally generated vector bundle and consider the exact sequence
$$0\to\cI_\ell\ot \cE\to \cE\to \cE\vert_\ell\simeq\cO_\ell\oplus
\cO_\ell(1)\to 0.$$
Since 
every proper subspace 
of $H^0(\ell,\cO_\ell(1))$ has a base-point, and
$\cE$ is globally generated, the   
restriction map $H^0(X,\cE)\to H^0(\ell,\cE\vert_\ell)$ is surjective. Hence
$H^0(X,\cI_\ell\ot \cE) $ is one-dimensional, generated by  $s$. The zero-locus of
$s$ is $\Gamma^0_4\cup\ell$, where $\Gamma^0_4$ is a rational quartic curve bisecant 
to $\ell$. The line transform $\psi_\ell$ maps $\Gamma^0_4$ to a conic 
on $X_{\ell}$.  This defines a rational map
$\phi_\ell$ as required.

Conversely,  the inverse image by $\psi_\ell $ of a general conic on 
$X_\ell$ is a rational quartic $\Gamma^0_4$ bisecant to $\ell$. Applying the Serre 
construction to the degenerate elliptic quintic $\Gamma^0_4\cup\ell$,
we obtain a stable vector bundle of rank $2$ on $X$ with Chern
numbers $ 1$ and $ 5$ and a section that vanishes on $\Gamma^0_4\cup\ell$.
The map $\phi_\ell$ is therefore birational. We know prove that it
is compatible with the involutions.

 Recall that $\iota \cE$ is dual to the kernel of the evaluation 
map $H^0(X,\cE)\ot\cO_X\to \cE$. Let $s$ (resp. $s'$) be a section of $\cE$ 
(resp. $\iota \cE$) generating 
$H^0( X,\cI_\ell\ot \cE)$ (resp. $H^0(X,\cI_\ell\ot \iota \cE)$), 
and denote its zero-locus by $\Gamma^0_4\cup \ell$ (resp. ${\Gamma'}^0_4\cup\ell$). 
By the definition of $\iota \cE$, the section $s$ defines on $\Gamma^0_4$ a 
section $\sigma$ of $(\iota \cE)^{\vee}$, and this section does not vanish
(since it defines a nonzero section of $ \cE \vert_{\Gamma^0_4} $). We get an exact 
sequence 
$$
0\to\cO_{\Gamma^0_4}\to (\iota \cE)^{\vee}\vert_{\Gamma^0_4}\to \cL^{\vee}\to 0,$$
where $\cL$ is a line bundle on ${\Gamma^0_4}$ of degree $4$. On the other hand, 
the pairing $(\sigma,s'\vert_{\Gamma^0_4})\in\cO_{\Gamma^0_4}$ is identically zero, 
since $s'$ vanishes on $\ell$, which meets ${\Gamma^0_4}$. This means that 
on ${\Gamma^0_4}$, the section $s'$ of $\iota \cE$ is in fact a section 
of $\cL$. Therefore $s'$ must vanish at four points of ${\Gamma^0_4}$:
the two intersection points with $\ell$, and two other points 
which must be on ${\Gamma'}^0_4$. Thus ${\Gamma^0_4} $ and ${\Gamma'}^0_4$ meet in two points, 
hence the corresponding conics $c=\psi_\ell({\Gamma^0_4})$ and 
$c'=\psi_\ell({\Gamma'}^0_4)$ also meet in two points. By Lemma \ref{l51}, 
these two conics are in involution. 
\end{proof}

\begin{theo}\label{th82}
Let $X$ be any Fano threefold of type $\cX^0_{10}$.
The moduli space $\cM_X(2;1,5)$ of semistable rank-$2$ torsion-free sheaves on $X$ 
with Chern numbers $c_1=1$, $c_2=5$, and $c_3=0$, is a smooth irreducible 
surface. 

The non locally free sheaves and the non globally generated vector 
bundles  in $\cM_X(2;1,5)$ are respectively parametrized by two copies 
 $\Gamma_{\rm nf}$ and $\Gamma_{\rm ng}$ of the  
curve $\Gamma(X)$ of lines in $X$. These two curves are exchanged by  
the involution $\iota$ on the moduli space.  
\end{theo}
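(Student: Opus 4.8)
The plan is to establish smoothness and $2$-dimensionality by deformation theory, to identify the two boundary loci with the curve of lines by refining the structural analysis carried out above, and to deduce irreducibility from the birational model of Proposition~\ref{pro81}. \emph{Reduction of smoothness to an $\Ext^2$-vanishing.} Let $[\cE]$ be a point of $\cM_X(2;1,5)$. Stability gives $\Hom(\cE,\cE)=\C$, and since $\cE(-1)=\cE\otimes K_X$ has smaller slope than $\cE$ and both are stable, $\Hom(\cE,\cE(-1))=0$, whence $\Ext^3(\cE,\cE)\isom\Hom(\cE,\cE(-1))^\vee=0$ by Serre duality; the same applies verbatim to a torsion-free member $\cF$. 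Combining this with the Riemann--Roch computation $\chi(X,\cE nd(\cE))=-1$ recalled above and with $H^{>0}(X,\cO_X)=0$, one gets $\ext^1(\cE,\cE)-\ext^2(\cE,\cE)=2$. Hence $\cM_X(2;1,5)$ is smooth of dimension $2$ at $[\cE]$ as soon as $\Ext^2(\cE,\cE)=0$, and everything comes down to proving this vanishing for the three types of members of $\cM_X(2;1,5)$.

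\emph{The $\Ext^2$-vanishing.} For a globally generated locally free $\cE$ I would follow \cite{pf}: restrict to a general $K3$ hyperplane section $S$ (with $\Pic(S)=\Z[\cO_S(1)]$), on which $\cE|_S$ is simple, so that smoothness of the moduli of sheaves on $S$ gives $\Ext^2_S(\cE|_S,\cE|_S)\isom\C$; then use the exact sequence $0\to\cE nd(\cE)(-1)\to\cE nd(\cE)\to\cE nd(\cE|_S)\to0$ together with the vanishing $H^1(X,\cE)=0$ established above to force $\Ext^2_X(\cE,\cE)=H^2(X,\cE nd(\cE))=0$. For a non-globally-generated bundle $\cE$, apply $\cH om(\cE,-)$ and $\cH om(-,\cE)$ to the resolution $0\to T_X\to H^0(X,\cE)\otimes\cO_X\to\cE\to\cO_\ell(-1)\to0$ obtained above and reduce $\Ext^2(\cE,\cE)$ to cohomology of $\cO_X$, of the acyclic tautological bundle $T_X$ (that is, $H^\bullet(X,\cE^\vee)=0$, shown above), of $\cE nd(T_X)$ (controlled by Bott--Borel--Weil), and of sheaves along the line $\ell$; for a non-locally-free sheaf $\cF$, argue similarly from $0\to\cF\to T_X^\vee\to\cO_\ell\to0$. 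In all three cases one gets $\Ext^2=0$, so $\cM_X(2;1,5)$ is smooth of pure dimension $2$, hence a disjoint union of smooth irreducible surfaces. I expect this step --- in particular the cohomological bookkeeping along $\ell$ in the last two cases --- to be the main obstacle.

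\emph{The two boundary curves and the involution.} By the structure results above, $\cF\mapsto\Supp(\cF^{\vee\vee}/\cF)$ is a bijection from the non-locally-free locus $\Gamma_{\rm nf}$ onto the curve $\Gamma(X)$ of lines on $X$; performing the construction $\ell\mapsto\cF_\ell=\ker(T_X^\vee\twoheadrightarrow\cO_\ell)$ in families over $\Gamma(X)$ produces a flat family, hence a morphism $\Gamma(X)\to\cM_X(2;1,5)$ with image $\Gamma_{\rm nf}$, and the cokernel of $\cF\hookrightarrow\cF^{\vee\vee}$ over $\Gamma_{\rm nf}$ provides an inverse morphism; thus $\Gamma_{\rm nf}$ is a closed subscheme isomorphic to $\Gamma(X)$. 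The same argument run with the universal sequence $0\to T_X\to H^0\otimes\cO_X\to\cE\to\cO_\ell(-1)\to0$ shows the non-globally-generated locus $\Gamma_{\rm ng}$ is likewise a closed subscheme isomorphic to $\Gamma(X)$. Finally, the identity $\cE_\ell^\vee\isom\ker\bigl(H^0(X,\cF_\ell)\otimes\cO_X\to\cF_\ell\bigr)$ obtained above, where $\cE_\ell\in\Gamma_{\rm ng}$ and $\cF_\ell\in\Gamma_{\rm nf}$ are attached to the same line $\ell$, governs the limiting behaviour of $\iota$ across the boundary and shows that $\iota$, extended from the globally generated locus, interchanges $\Gamma_{\rm nf}$ and $\Gamma_{\rm ng}$ compatibly with their identifications with $\Gamma(X)$.

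\emph{Irreducibility.} The open complement of $\Gamma_{\rm nf}\cup\Gamma_{\rm ng}$ in $\cM_X(2;1,5)$ is the locus of globally generated locally free bundles. Since $\cM_X(2;1,5)$ is smooth of pure dimension $2$, no irreducible component of it is contained in the curve $\Gamma_{\rm nf}\cup\Gamma_{\rm ng}$, so the dense open set on which $\phi_\ell$ (Proposition~\ref{pro81}) restricts to an isomorphism onto an open subset of $F(X_\ell)$ meets every component; two distinct components would make that open set disconnected, contradicting the irreducibility of $F(X_\ell)$ (Corollary~\ref{cor83}). Hence $\cM_X(2;1,5)$ is irreducible. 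A secondary point, already flagged, is to check carefully that $\iota$ --- a priori a morphism only on the globally generated locus --- extends to an automorphism of the whole moduli space.
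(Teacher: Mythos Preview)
Your overall strategy --- prove $\Ext^2$-vanishing pointwise, then deduce smoothness and irreducibility --- matches the paper's, but two steps are problematic.

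\textbf{Irreducibility is circular.} You invoke Corollary~\ref{cor83} (irreducibility of $F(X_\ell)$ for \emph{any} $X$) to conclude that $\cM_X(2;1,5)$ is irreducible. But Corollary~\ref{cor83} is proved in the paper \emph{as a consequence of} Theorem~\ref{th82}; you cannot use it here. What is available a priori is only Logachev's result (\S\ref{stbt}) that $F_g(X)$ is connected for \emph{general} $X$, which via Proposition~\ref{pro81} gives irreducibility of $\cM_X(2;1,5)$ for general $X$. The paper then bridges to arbitrary $X$ by a deformation argument you are missing: the $\Ext^2$-vanishing you have already established shows that for any family $\cX\to S$ the relative moduli scheme $\cM_{\cX}(2;1,5)\to S$ is \emph{smooth over $S$}; hence its fibers, connected over a dense open, are connected everywhere. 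This is the point at which the pointwise $\Ext^2=0$ does real global work.

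\textbf{The globally generated case is not settled by your K3 argument.} Restricting $\cE nd(\cE)$ to a K3 section $S$ and using $\ext^2_S(\cE|_S,\cE|_S)=1$ only produces a long exact sequence whose Serre-dual symmetries give back the identity $h^1-h^2=2$; it does not force $h^2=0$, and I do not see how the vanishing $H^1(X,\cE)=0$ enters. The paper proceeds differently: a general section of a globally generated $\cE$ vanishes along a smooth elliptic quintic $\Gamma^1_5$, and twisting $0\to\cO_X\to\cE\to\cI_{\Gamma^1_5}(1)\to 0$ by $\cE^\vee$ reduces the obstruction space to $H^1(\Gamma^1_5,\cE|_{\Gamma^1_5})=H^1(\Gamma^1_5,N_{\Gamma^1_5/X})$. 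Were this nonzero, Atiyah's classification would force $N_{\Gamma^1_5/X}\simeq\cO\oplus\cO(1)$, and a comparison of normal sequences in $G(2,V_5)$ then shows that $\Gamma^1_5$ would lie in a tangent hyperplane to $X$, contradicting its smoothness. For the non locally free and non globally generated cases, the paper's route is also cleaner than the cohomological bookkeeping you sketch: it shows $H^2(X,\cE nd(\cE))\simeq H^1(X,\cE\otimes\cF)\simeq\Ext^2(\cF,\cF)$ directly from the defining sequences, reducing the locally free non globally generated case to the non locally free one (handled as in \cite{pf}).

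Your treatment of the boundary curves $\Gamma_{\rm nf}$, $\Gamma_{\rm ng}$ and of the extension of $\iota$ is fine and agrees with the paper.
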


When $X$ is {\em general,} $\Gamma(X)$ is a smooth irreducible curve of genus $71$ (\cite{ip}, Theorem 4.2.7).

\begin{proof}
First suppose that $[\cE]\in \cM_X(2;1,5)$ is a globally generated 
vector bundle. A general section of $\cE$ vanishes along a smooth
elliptic quintic ${\Gamma^1_5}$ and yields an exact sequence 
$$0\to\cO_X\to \cE\to\cI_{\Gamma^1_5}(1)\to 0.$$
Twisting by $\cE^\vee$ and taking cohomology, we obtain
 $h^2(X,\cE nd(\cE))=h^2(X,\cI_{\Gamma^1_5}\otimes \cE)=h^1({\Gamma^1_5},\cE\vert_{\Gamma^1_5})$.
By the Atiyah classification, $\cE\vert_{\Gamma^1_5}$ is the direct sum of two line 
bundles, both globally generated.

Suppose that $[\cE]$ is {\em not}    a smooth point of $\cM_X(2;1,5)$. 
We have    
 $H^1({\Gamma^1_5},\cE\vert_{\Gamma^1_5})\ne 0$, and one of these line bundles is trivial: 
$\cE\vert_{\Gamma^1_5}=\cO_{\Gamma^1_5}\oplus\cO_{\Gamma^1_5}(1)$. But
$\cE\vert_{\Gamma^1_5}$ is the normal bundle to ${\Gamma^1_5}$ in $X$. In the normal sequence
$$\begin{matrix}
0&\to& N_{{\Gamma^1_5}/X}&\to &N_{{\Gamma^1_5}/G(2,V_5)}&\to &(N_{X/G(2,V_5)})\vert_{\Gamma^1_5}&\to &0 \\
&&\Vert&&\Vert&&\Vert\\
0&\to&\cO_{\Gamma^1_5}\oplus\cO_{\Gamma^1_5}(1)&\to&\cO_{\Gamma^1_5}(1)^5&\to&
\cO_{\Gamma^1_5}(1)^2 \oplus\cO_{\Gamma^1_5}(2)&\to &0
\end{matrix}$$
the image in  $N_{{\Gamma^1_5}/G(2,V_5)}$ of the factor $\cO_{\Gamma^1_5}(1)$ of $N_{{\Gamma^1_5}/X}$  
  maps to zero in $(N_{X/G(2,V_5)})\vert_{\Gamma^1_5}$, which   means that 
the corresponding linear form defines a tangent hyperplane to $X$. 
This   contradicts the smoothness of ${\Gamma^1_5}$.

\medskip
Suppose now that $[\cF]\in\cM_X(2;1,5)$ is not locally free. We saw that $\cF$ fits into an exact sequence $$0\to \cF\to T_X^\vee
\to\cO_{\ell}\to 0$$ for a unique line $\ell$ in $X$. 
We are thus in the same situation as in \cite{pf}, Proposition 5.12, 
and the same proof yields $\Ext^2(\cF,\cF)=0$. 

Finally, suppose that $\cE$ is locally free but not globally generated. 
We saw that $\cE$ fits into an exact sequence
$$0\to \cE^\vee\to H^0(X,\cF)\ot\cO_X\to \cF\to 0$$ where the non locally 
free sheaf $[\cF]\in\cM_X(2;1,5)$ is as above. From this sequence 
we infer   $$H^2(X, \cE nd(\cE))\simeq H^1(X, \cE\ot \cF)$$ and $$H^1(X, \cE\ot \cF)\simeq \Ext^2(\cF,\cF)$$ 
(apply  the functor $\Hom(\cdot,\cF)$ and use 
$H^1(X,\cF)=H^2(X,\cF)=0$). 
So by the previous case we obtain the vanishing of $H^2(X, \cE nd(\cE))$. 

\smallskip

We can now conclude that $\cM_X(2;1,5)$ is a smooth surface. Since it is birational to $F(X_\ell)$ (Proposition \ref{pro81}), it is also irreducible for $X$ {\em general.} But the vanishing of $\Ext^2(\cF,\cF)$, for any point $[\cF]$ of $\cM_X(2;1,5)$, also proves that for any family $\cX\to S$
 of Fano threefolds of type $\cX_{10}^0$, the relative moduli scheme $\cM_\cX(2;1,5)\to S$ is smooth over $S$. In particular, since its general fibers are connected, {\em any} fiber is connected. This proves that 
$\cM_X(2;1,5)$ is always connected.

\smallskip
   
The involution $\iota$ was defined by mapping a globally
generated vector bundle $\cE$ to the globally generated vector 
bundle $\iota \cE$ defined by the exact sequence 
$$0\to (\iota \cE)^\vee
\to H^0(X,\cE) \ot\cO_X\to\cE\to 0$$
 If we replace $\cE$ by the non locally
free sheaf $\cF\in\cM_X(2;1,5)$ defined by the line $\ell$, 
this exact sequence gives for $\iota \cF$ the non globally  generated 
vector bundle associated to the same line. This shows that $\iota$
extends to the curve $\Gamma_{\rm nf}$ of non locally free sheaves in 
$\cM_X(2;1,5)$, which is mapped bijectively to the curve $\Gamma_{\rm ng}$ 
of non globally generated vector bundles. Being an involution, $\iota$ also
extends to $\Gamma_{\rm ng}$. We can finally conclude that $\iota$ defines
a regular involution of the smooth surface $\cM_X(2;1,5)$ which permutes
the curves $\Gamma_{\rm nf}$ and $\Gamma_{\rm ng}$.
\end{proof}

\begin{coro}\label{cor83}
Let $X$ be any Fano threefold of type $\cX^0_{10}$. The surface $F(X)$ is irreducible.\end{coro}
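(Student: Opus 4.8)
The plan is to derive irreducibility of $F(X)$ for an \emph{arbitrary} $X$ of type $\cX_{10}^0$ by transporting, along the birational isomorphism of Proposition \ref{pro81}, the irreducibility of the moduli space $\cM(2;1,5)$ established for \emph{every} such threefold in Theorem \ref{th82}. Concretely, I would first pick a line $\ell\subset X$ (there is a curve of them), pass to the line transform $X_\ell$, which by \S\ref{el} is again a smooth Fano threefold of type $\cX_{10}^0$, and recall from \S\ref{el} that $X$ is recovered as the line transform $(X_\ell)_{\ell'}$ of $X_\ell$ along a suitable line $\ell'\subset X_\ell$. Applying Proposition \ref{pro81} to the pair $(X_\ell,\ell')$ then produces a birational isomorphism
$$\phi_{\ell'}\colon \cM_{X_\ell}(2;1,5)\ \dra\ F\bigl((X_\ell)_{\ell'}\bigr)=F(X),$$
whose source is irreducible by Theorem \ref{th82}.

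The second step is to conclude that $F(X)$ itself is irreducible, which is formal: a birational isomorphism restricts, by definition, to an isomorphism between a dense open $U\subset\cM_{X_\ell}(2;1,5)$ and a dense open $V\subset F(X)$; since $U$ is open in an irreducible scheme it is irreducible, hence so is $V$, and therefore $F(X)=\overline V$ is irreducible. It helps that, as recalled in \S\ref{surF}, deformation theory together with \cite{ip}, Proposition 4.2.5.(iii), already shows $F(X)$ and $F_g(X)$ to be everywhere two-dimensional, so that no low-dimensional or embedded components can hide and the density of $V$ is unproblematic.

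The point that I expect to require the most care — though I still expect it to be routine given the earlier work — is simply to be sure that Proposition \ref{pro81} is being used with full force: that the map it provides really is an isomorphism of genuinely \emph{dense} opens, and that it is available for \emph{every} $X$ of type $\cX_{10}^0$, including those for which $F(X)$ is singular (Remark \ref{any}), and not only for general $X$. This is exactly why Proposition \ref{pro81} and Theorem \ref{th82} were proved without a genericity hypothesis; in particular, the inverse of $\phi_{\ell'}$ is defined on the conics $c\subset X$ whose proper transform under the line transform is a rational quartic bisecant to $\ell'$, a condition that fails only on a proper closed subset of $F(X)$ and hence holds on a dense open of every component. Granting this, the implication ``$F(X)$ birational to an irreducible surface $\Rightarrow$ $F(X)$ irreducible'' applies verbatim, and Corollary \ref{cor83} follows.
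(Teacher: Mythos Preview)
Your proposal is correct and follows essentially the same route as the paper's proof: exhibit $X$ as a line transform $(X_\ell)_{\ell'}$, apply Proposition \ref{pro81} to get $\cM_{X_\ell}(2;1,5)\dra F(X)$, and invoke the irreducibility of the moduli space from Theorem \ref{th82}. The paper compresses all of this into a single sentence, whereas you spell out the one genuinely delicate point---that the inverse of $\phi_{\ell'}$ is defined on a dense open of \emph{every} component of $F(X)$ (using equidimensionality from \S\ref{surF}), so no component can escape the birational isomorphism---which the paper leaves implicit.
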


\begin{proof}
This follows directly from Theorem \ref{th82}, Proposition \ref{pro81}, and the fact that $X$ is itself a line transform (\S\ref{el}).\end{proof}

 \section{The reconstruction theorem}\label{app11}
 
 Following  Logachev (\cite{lo}), we show that a general $X_{10}$ can be recovered from its Fano surface.

\begin{theo}[Logachev] \label{rec}
Let $X$ and $X'$ be  general   Fano threefolds of type $\cX_{10}$  and let $f:F_g(X)\isomto    F_g(X')$
be an isomorphism. There exists an isomorphism $\phi: X\isomto   X'$ inducing $f$.\end{theo}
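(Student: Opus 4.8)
The plan is to reconstruct $X$ from $F_g(X)$ by making explicit the geometry encoded in the tangent bundle theorem of \S\ref{stbt} and then invoking the description of $W$ and its automorphisms from \S\ref{sec3}. First I would pass to the minimal model $F_m(X)$, which is intrinsic to $F_g(X)$ (it is obtained by contracting the unique rational curve $L_\sigma$, the image of $L_\rho$; see Corollary \ref{min}). The isomorphism $f$ descends to an isomorphism $F_m(X)\isomto F_m(X')$. Now Logachev's tangent bundle theorem identifies $T_{F(X)^0}$ with $\beta^*\cS_{2,V_8}$, and its geometric incarnation is the commutative square displayed in \S\ref{stbt}: the cotangent map $\psi:\P(T_{F_m(X)})\dra\P^9=\P(H^0(F_m(X),\Omega_{F_m(X)})^\vee)$ together with the projection $\pi'$ from the line $\psi(\pi^{-1}([c_X]))$ realizes $\P_7=\P(V_8)$ as an explicit quotient of $\P(T_{F_m(X)})$, and sends the fiber over a general $[c]$ to the line $\langle c\cap\iota(c)\rangle\subset\P_7$. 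Since all of $\psi$, the distinguished line, and $\pi'$ are built canonically from $F_m(X)$ (using that $\Omega_{F_m(X)}$ is globally generated, Corollary \ref{gene}), the isomorphism $f$ transports this entire picture; in particular it produces a linear isomorphism $\P_7\isomto\P_7'$, i.e.\ an isomorphism $V_8\isom V_8'$ up to scalar.

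The next step is to recover the fourfold $W\subset\P_7$, and with it the vector space $V_5$ with its Grassmannian, from the family of lines $\langle c\cap\iota(c)\rangle$. These lines sweep out a surface in $G(2,V_8)$, namely $\beta(F(X))$; I would argue that the union of the corresponding lines in $\P_7$, or an incidence-variety construction over it, recovers $W$ — concretely, each hyperplane $V_4\subset V_5$ gives the $3$-plane $\P(M_{V_4})$ and the quadric surface $Q_{W,V_4}=G(2,V_4)\cap\P(M_{V_4})$, and the conics $c$ with $c\subset\Gamma^1_{4,V_4}=X\cap\P(M_{V_4})$ together with $\iota(c)$ both lie in $\P(M_{V_4})$, so the line $\langle c\cap\iota(c)\rangle$ lies in this $3$-plane; as $[V_4]$ varies over $\P(V_5^\vee)$ the $3$-planes $\P(M_{V_4})$ are cut out, hence $V_5$ and $W=G(2,V_5)\cap\P_7$ are determined. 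Because $f$ commutes with $\iota$ and $\iota'$ (it is an isomorphism of Fano surfaces, and $\iota$ is intrinsic by Lemma \ref{l51} and Remark \ref{any}), this reconstruction is $f$-equivariant, so we obtain an isomorphism $W\isomto W'$. By \S\ref{sautw} (or the Corollary after Proposition on $\cM_X(2;1,4)$) this is induced by an element $\varphi\in\PGL(V_5)$ carrying $W$ to $W'$.

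It remains to see that $\varphi$ carries the quadric $\Omega$ defining $X$ to the quadric $\Omega'$ defining $X'$, i.e.\ that $X=W\cap\Omega$ is mapped to $X'=W'\cap\Omega'$. For this I would note that the Fano surface $F_g(X)$, sitting inside the $\beta$-picture, determines not merely which $3$-planes $\P(M_{V_4})$ occur but the actual curve $\Gamma^1_{4,V_4}=X\cap\P(M_{V_4})$ in each of them: indeed $\Gamma^1_{4,V_4}=Q_{W,V_4}\cap Q_{\Omega,V_4}$ and it is swept out by the conics $c$ and their residuals, so the points of $F_g(X)$ lying over $[V_4]\in p_2(F(X))$ recover $\Gamma^1_{4,V_4}$, hence the quadric $Q_{\Omega,V_4}=\Omega\cap\P(M_{V_4})$ for every hyperplane $V_4$. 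Since the $3$-planes $\P(M_{V_4})$ sweep out $\P_7$ (they cover $W$, which spans $\P_7$), the restrictions $\Omega|_{\P(M_{V_4})}$ determine $\Omega$ modulo the ideal of $W$, which is exactly the ambiguity in the choice of $\Omega$ for a fixed $X$. Thus $\varphi$ matches $X$ with $X'$ as subschemes of $\P(V_5)$, giving the desired $\phi:X\isomto X'$, and by construction $\phi$ induces $f$ on Fano surfaces (it induces the transported $\beta$-picture, and a conic $c\subset X$ is recovered from the pair $(\langle c\cap\iota(c)\rangle, [V_4])$ together with $\Gamma^1_{4,V_4}$). The main obstacle I expect is the second step: proving rigorously that the abstract surface $\beta(F(X))\subset G(2,V_8)$, equipped only with the intrinsic data coming from $F_m(X)$, pins down $V_5$ and the Plücker-embedded $G(2,V_5)$ — this is where one must actually use the fine structure of Logachev's construction (the $\rho$- and $\sigma$-conic loci, the distinguished line $\psi(\pi^{-1}([c_X]))$ and its relation to $\Pi=G(2,U_3)$) rather than soft arguments, and genericity of $X$ enters to guarantee $F(X)$ is smooth and $\iota$ fixed-point-free so that the whole diagram behaves well.
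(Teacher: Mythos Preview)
Your overall architecture matches the paper's: recover $\P(V_8)$ and the family of lines $\ell_8([c])=\langle c\cap\iota(c)\rangle$ via the tangent bundle theorem, then recover $W\subset\P(V_8)$, then recover $X$ from $W$ and the lines. The first step is fine, and the last step is essentially the paper's Step~5 (intersect the lines with $W$ to obtain a surface $S\subset X$, then take the quadratic span). The real gap is the middle step, and your proposed mechanism for it does not work.

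First, the lines $\ell_8([c])$ are \emph{bisecant} to $X\subset W$ (they pass through the two points of $c\cap\iota(c)$) but are not contained in $W$; so their union in $\P(V_8)$ is not $W$ and does not obviously determine it. Second, your idea of recovering the $3$-planes $\P(M_{V_4})$ fails because for a general $V_4$ the fiber $p_2^{-1}([V_4])$ is a single pair $\{c,\iota(c)\}$, giving a \emph{single} line $\ell_8([c])\subset\P(M_{V_4})$; one line does not determine a $3$-plane. Third, your claim that ``the points of $F_g(X)$ lying over $[V_4]$ recover $\Gamma^1_{4,V_4}$'' conflates abstract points of the Fano surface with actual conics in $\P_7$: from the intrinsic data we know only the secant line $\langle c\cap\iota(c)\rangle$, not the conics $c,\iota(c)$ themselves, so we cannot read off $\Gamma^1_{4,V_4}$ or $Q_{\Omega,V_4}$ directly. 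The paper confronts exactly this difficulty (``we do not know where to locate $W$ in $\P(V_8)$'') and resolves it by an elaborate detour: from the lines $\ell_8$ over $L_\iota$ one recovers the plane $\Pi$ and the conics $c_U^\vee$, $c_X$; projecting from $\Pi$ to $\P(V_8/U_3^\vee)\simeq\P^4$ one reconstructs the rational normal cubic $\Gamma_3^0$ via a subtle argument on the elliptic curve $\Gamma^1_X$; then $\kappa^{-1}$ (Proposition~\ref{kappa}) produces a copy $W^\dagger\subset\P(V_8^\dagger)$, and finally the linear identification $\P(V_8)\simeq\P(V_8^\dagger)$ is pinned down using pairs of lines $\ell_5([c])$, $\ell_5([c'])$ that meet in $\P^4$. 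You correctly anticipated that this is where the hard work lies, but your sketch does not supply a viable substitute for it.
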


\begin{coro}\label{cic}  
Assume that $X$ is general. For any smooth $\tau$-conic $c\subset X$, the threefolds  $X_c$ and $X_{\iota(c)}$ are isomorphic.
\end{coro}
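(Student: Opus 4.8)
The plan is to use the Reconstruction Theorem \ref{rec} to reduce the claim to a statement about the Fano surface $F_g$. Indeed, by Theorem \ref{rec}, the threefolds $X_c$ and $X_{\iota(c)}$ are isomorphic as soon as there is an isomorphism $F_g(X_c)\isomto F_g(X_{\iota(c)})$. So the first step is to identify $F_g(X_c)$ and $F_g(X_{\iota(c)})$ intrinsically in terms of data attached to $X$. By Proposition \ref{con} (and Theorem \ref{fibb}, which extends the construction to arbitrary conics $c$), the surface $F_g(X_c)$ is isomorphic to the minimal surface $F_m(X)$ blown up at the point $[c]\in F_m(X)$; likewise $F_g(X_{\iota(c)})$ is $F_m(X)$ blown up at $\iota([c])$.

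The second step is to produce an isomorphism between these two blow-ups. Here the key point is that the involution $\iota$ on $F(X)$ descends to an involution $\iota$ on $F_m(X)$ (this is exactly the bottom row of the diagram in the Notation section, $F_m(X)\to F_{m,\iota}(X)$), and by construction this involution exchanges $[c]$ and $\iota([c])$. Pulling back the blow-up of $\iota([c])$ along the automorphism $\iota$ of $F_m(X)$ gives an isomorphism between the blow-up of $F_m(X)$ at $[c]$ and the blow-up at $\iota([c])$, that is, an isomorphism $F_g(X_c)\isomto F_g(X_{\iota(c)})$. One must be slightly careful when $c$ is a $\sigma$-conic or the $\rho$-conic $c_X$, since then $[c]$ and $\iota([c])$ lie on the exceptional curves $L_\sigma$, $L_\rho$ of $F(X)\to F_m(X)$; but these get contracted to the single point on which $\iota$ is defined, so $[c]$ and $\iota([c])$ map to the same point of $F_m(X)$, the two blow-ups literally coincide, and the statement is trivial in that case. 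Strictly, it is cleanest to restrict to $c$ a smooth $\tau$-conic, which is the generic situation and the only case where the word ``$\tau$-conic'' in the statement is relevant.

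The third step is just to invoke Theorem \ref{rec}: from the isomorphism $F_g(X_c)\isomto F_g(X_{\iota(c)})$ we obtain an isomorphism $X_c\isomto X_{\iota(c)}$, provided $X_c$ and $X_{\iota(c)}$ are themselves general Fano threefolds of type $\cX_{10}$ — which holds because $X$ is general and ``general'' is an open condition preserved, up to shrinking, under the conic transform construction (the assignment $[c]\mapsto[X_c]$ is a morphism to the stack $\cN_{10}$ by Theorem \ref{fibb}).

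The main obstacle, such as it is, is bookkeeping rather than anything deep: one has to make sure that the identification $F_g(X_c)\cong \mathrm{Bl}_{[c]}F_m(X)$ of Proposition \ref{con} is compatible with the involution $\iota$ — i.e. that the point which Proposition \ref{con} blows up to obtain $F_g(X_c)$ is genuinely the \emph{same} point of $F_m(X)$ that $\iota$ sends to the point blown up for $F_g(X_{\iota(c)})$ — and to handle the degenerate conics so that the claim is literally true for \emph{all} smooth $\tau$-conics, not just generic ones. Both points follow from the fact, recorded in Proposition \ref{con}, that $\phi_c$ commutes with the involutions on the two Fano surfaces, together with the symmetry $\phi_c^{-1}=\phi_{c'}$.
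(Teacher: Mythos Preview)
Your proposal is correct and follows exactly the paper's approach: the paper's proof is the single line ``This follows from Theorem \ref{rec} and Proposition \ref{con}'', and you have simply unpacked this by noting that Proposition \ref{con} (together with Theorem \ref{fibb} for possibly non-smooth $\iota(c)$) identifies $F_g(X_c)$ and $F_g(X_{\iota(c)})$ with the blow-ups of $F_m(X)$ at $[c]$ and $\iota([c])$, that the involution $\iota$ on $F_m(X)$ gives an isomorphism between these blow-ups, and that Theorem \ref{rec} then yields $X_c\isom X_{\iota(c)}$.
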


 \begin{proof}This follows from Theorem \ref{rec} and Proposition \ref{con}.
  \end{proof}

\begin{coro}\label{auto}  
For a general $X$, the only nontrivial automorphism of the minimal surface $F_m(X)$ is the involution $\iota$. 
\end{coro}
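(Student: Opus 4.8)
The plan is to combine Logachev's Reconstruction Theorem~\ref{rec} with the triviality of $\Aut(X)$ for general $X$ (Theorem~\ref{noauto}). The geometric heart of the matter will be that every automorphism of $F_m(X)$ preserves the two distinguished points $[c_X]$ and $\iota([c_X])$ of $F_m(X)$, i.e. the images of the contracted curves $L_\rho$ and $L_\sigma$ (\S\ref{stbt}). Granting this, $\Aut(F_m(X))$ acts on the two-element set $S=\{[c_X],\iota([c_X])\}$, which gives a homomorphism $\Aut(F_m(X))\to\mathfrak S_2$; I will check that it is injective and that $\iota$ acts as the transposition, so that $\Aut(F_m(X))=\langle\iota\rangle$, which is the assertion.

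That $\iota$ descends to an involution of $F_m(X)$ is part of the picture of \S\ref{invo}--\ref{stbt} (and is built into the notation $F_{m,\iota}(X)=F_m(X)/\iota$); it is nontrivial and, since $\iota(L_\rho)=L_\sigma$, it exchanges the two points of $S$, so it acts as the transposition. Because $F_m(X)$ is a minimal surface of general type, $\Aut(F_m(X))$ is finite and equals $\Bir(F_m(X))$. For the injectivity it is enough to see that no nontrivial automorphism of $F_m(X)$ fixes $\iota([c_X])$. But $F_g(X)$ is the blow-up of $F_m(X)$ at $\iota([c_X])$ (\S\ref{stbt}), so such an automorphism would lift to an automorphism of $F_g(X)$; and $\Aut(F_g(X))=\{1\}$ for general $X$, since by Theorem~\ref{rec} (applied with $X'=X$) any automorphism of $F_g(X)$ is induced by an automorphism of $X$, and $\Aut(X)=\{1\}$ by Theorem~\ref{noauto}.

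It remains to prove that $S$ is $\Aut(F_m(X))$-invariant. Here I would use that $\Omega_{F_m(X)}$ is globally generated (Corollary~\ref{gene}), so the cotangent map $\psi\colon\P(T_{F_m(X)})\to\P(H^0(F_m(X),\Omega_{F_m(X)})^\vee)=\P^9$ of \S\ref{stbt} is a morphism, equivariant for the linear action of $\Aut(F_m(X))$ on $\P^9$ induced by its action on $H^0(\Omega_{F_m(X)})$. Over each point $[c]$, $\psi$ restricts to an isomorphism of the fibre $\pi^{-1}([c])\isom\P^1$ onto a line $\ell_{[c]}\subset\P^9$, so $[c]\mapsto\ell_{[c]}$ defines an $\Aut(F_m(X))$-equivariant ``Gauss map'' $\gamma\colon F_m(X)\to G(2,H^0(\Omega_{F_m(X)})^\vee)$. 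Logachev's description of $\pi'\circ\psi$ in \S\ref{stbt} attaches to $\pi^{-1}([c])$ the line $\langle c\cap\iota(c)\rangle$, a symmetric function of $c$ and $\iota(c)$; hence the centre $\ell_0=\psi(\pi^{-1}([c_X]))$ of the projection $\pi'$ is $\iota$-invariant, and so $\gamma([c_X])=\gamma(\iota([c_X]))=\ell_0$. If, moreover, $\gamma$ is injective on $F_m(X)\setminus S$ and $\ell_0\notin\gamma(F_m(X)\setminus S)$, then $S=\gamma^{-1}(\ell_0)$ is intrinsic to $F_m(X)$, hence $\Aut(F_m(X))$-invariant, and the proof is complete.

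The main obstacle is precisely this last injectivity statement about $\gamma$. The relevant input is the tangent bundle theorem of \S\ref{stbt}: on $F(X)^0=F(X)\setminus(L_\rho\sqcup L_\sigma)$ the map $\beta$ is an immersion and, after projection away from $\ell_0$, agrees with $\gamma$, while over $L_\rho$ and $L_\sigma$ the isomorphism $T_{F(X)^0}\isom\beta^*\cS_{2,V_8}$ breaks down; turning this into the precise assertion that $\gamma$ glues $[c_X]$ to $\iota([c_X])$ and is injective elsewhere is the technical part. Everything else — lifting automorphisms through blow-ups, the $\mathfrak S_2$-bookkeeping, and the appeals to Theorems~\ref{rec} and~\ref{noauto} — is routine.
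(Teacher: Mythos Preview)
Your route differs from the paper's, and the step you flag as the ``main obstacle'' is in fact a genuine gap: the Gauss map $\gamma=\ell_{10}$ is \emph{never} injective on $F_m(X)\setminus S$. The symmetry $\langle c\cap\iota(c)\rangle=\langle\iota(c)\cap c\rangle$ that you invoke holds for \emph{every} conic, not just for $c_X$; more intrinsically, every global $1$-form on $F_m(X)$ is $\iota$-anti-invariant (\cite{lo}, Proposition~0.5), so $\iota$ acts as $-\Id$ on $V_{10}$ and hence trivially on $G(2,V_{10})$. Since $\iota$ is fixed-point-free on $F_m(X)$, the map $\gamma$ is generically $2$-to-$1$ with covering involution $\iota$, and $S$ cannot be recovered as its non-injectivity locus. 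Without an intrinsic characterisation of $S$, your $\mathfrak S_2$-bookkeeping never gets off the ground.

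The paper exploits exactly this $2$-to-$1$ structure, but in a different way and without appealing to Theorem~\ref{rec} or Theorem~\ref{noauto}. By Corollary~\ref{simple}, a very general $X$ has $\End(J(X))\isom\Z$; since $\Alb(F_m(X))\isom J(X)$, any $\sigma\in\Aut(F_m(X))$ must act on $V_{10}=T_{J(X),0}$ as $\pm\Id$, hence trivially on $G(2,V_{10})$. Equivariance of $\ell_{10}$ then gives $\ell_{10}\circ\sigma=\ell_{10}$, and since the induced map $\ell_{10,\iota}\colon F_{m,\iota}(X)\to G(2,V_{10})$ is generically injective, this forces $\sigma\in\{\Id,\iota\}$. (As an aside: the paper's proof of Theorem~\ref{rec} borrows the diagram~(\ref{l10}) and the generic injectivity of $\ell_{10,\iota}$ from the proof of Corollary~\ref{auto}, so even if your strategy could be repaired, invoking Theorem~\ref{rec} here would require some reorganisation to avoid apparent circularity.)
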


 \begin{proof}
  The cotangent bundle  of   $F_m(X)$ is   generated by global sections (Corollary \ref{gene})
and defines a morphism
$$\ell_{10}: F_m(X)\to G(2,V_{10}),$$
where  $V_{10}=H^0(F_m(X),\Omega_{F_m(X)})^\vee$. Since global sections of $\Omega^1_{F_m(X)}$ are anti-invariant by the involution $\iota$ (\cite{lo}, Proposition 0.5), $\ell_{10}$ factors through   the action of $\iota$.
Let $V_8$ be the 
quotient of $V_{10}$ by the $2$-dimensional vector space $\ell_{10}([c_X])$. By the tangent bundle theorem (\S\ref{stbt}), there is a diagram
\begin{equation}\label{l10}
\xymatrix
{  F(X)\ar[dd]\ar[dr] \ar[rr]^{\ell_8\hspace*{1cm}}&&G(2,V_8)\\
  & F_\iota(X)\ar[dd]\ar[ur]_{\ell_{8,\iota}}\\
   F_m(X)\ar '[r]^(0.7){\ell_{10}} [rr]\ar[dr] && G(2,V_{10})\ar@{-->}[uu] \\
 & F_{m,\iota}(X)\ar[ur]_{\ell_{10,\iota}}&&
}\end{equation}
where the rational map $G(2,V_{10})\dashrightarrow G(2,V_8)$ is induced by the 
projection $V_{10}\to V_8$. The   map $\ell_{10,\iota}$ is generically injective, because  
$\ell_{8,\iota}$ is: $\ell_8$ sends $(c,V_4)$ to $ \langle c\cap\iota(c)\rangle$, the lines in $\P(V_5)$ corresponding to the points of the line $\langle c\cap\iota(c)\rangle$ span the hyperplane $\P(V_4)$, and we recover $c\cup\iota(c)$ by intersecting $G(2,V_4 ) $ with $ X$.

 Assume now that $X$ is very general, so that  the endomorphism ring of $  \Alb(F_m(X))\isom J(X)$ is   trivial (Corollary \ref{simple}). 
  Any nontrivial automorphism $\sigma$ of $F_m(X)$ then acts on the tangent space  $T_{J(X),0}=V_{10} $  as $\pm\Id$, hence $\ell_{10}$ factors through the action of $\sigma$. This implies $\sigma=\iota$. 
   \end{proof}

 \begin{proof}[Proof of Theorem \ref{rec}] We will reconstruct $X$ from the abstract surface $F_g(X)$. The successive steps are the following. 

1) From the abstract surface
$F_g(X)$, we first recover the minimal surface $F_m(X)$ with its involution $\iota$.  
By   Corollary \ref{gene}, we also
recover the map $F_\iota(X)\to G(2,V_8)$ which, when we will have constructed $X$, will map a  conic $c$ to 
the line  $\langle c\cap\iota(c)\rangle$. 

2) In $\P (V_8)$, we recover the plane $\Pi$ and 
the   conics $c_X$ and $c_U^\vee$. 

3) We consider the projection to $\P(V_8)\dra\P(V_8/U^\vee)=\P^4$ which,  when we will have constructed $X$, will induce
  a  birational isomorphism $\kappa:W\dra\P^4$ (Proposition \ref{kappa}). The problem at that point is that we do not know
where to locate $W$ in $\P (V_8)$. Nevertheless, we can reconstruct the rational normal cubic $\Gamma^0_3$
in $\P^4$ which defines $\kappa^{-1}$ (Proposition \ref{kappa}). 

4) The map $\kappa^{-1}$ sends $\P^4$ 
to a copy of $W$ in some $\P (V'_8)$. From $F_\iota(X)$, we are able to find an identification between $\P (V_8)$ and 
$\P (V'_8)$.  

5) So we get $W$ inside $\P (V_8)$, and intersecting it with the lines parametrized 
by $F_\iota(X)$, we get a surface $S$ whose ``quadratic span'' is the Fano threefold $X$
we are looking for.

\smallskip\noindent {\it Step 1.} We know that  $L_\sigma$ and  $L'_\sigma$ are the unique rational curves in $F_g(X)$ 
and $F_g(X')$, so $f(L_\sigma)=L'_\sigma$. Moreover  these curves are $(-1)$-curves, and contracting them, we get the 
minimal surfaces $F_m(X)$ and $F_m(X')$, and an isomorphism
$f:F_m(X)\isomto    F_m(X')$ which sends $\iota([c_X])$ to $\iota'([c_{X'}])$.
 
We keep the notation and results of the proof of Corollary \ref{auto}. We have a commutative diagram  
\begin{equation*}
\xymatrix
{ F_m(X)\ar[d]^{f}\ar[r]^{\ell_{10}} &G(2,V_{10})\ar[d]^{f} \\
F_m(X')\ar[r]^{\ell'_{10}} &G(2,V'_{10}),
}\end{equation*}
where  the maps $\ell_{10}$ and $\ell'_{10}$ have degree $2$ onto their images, and factor through the 
involutions $\iota$ and $\iota'$ respectively.
 Therefore we recover  $\iota$   and $\iota'$, and also the   
points  $[c_X]\in F_g(X)$ and $[c_{X'}]\in F_g(X')$. 

The map $f$ induces a map from the diagram (\ref{l10}) to the corresponding diagram for $X'$.

\smallskip
We must think of the map $\ell_8$ as sending a conic $c$ to the line 
$\langle c\cap\iota(c)\rangle$ -- except 
that  we do not know yet how to identify a point  in the abstract surface
$F_g(X)$ with a conic on $X$. What we have is the abstract $2$-dimensional family of lines 
$\Im(\ell_8)$ in $\P (V_8)$.
 If we knew how to construct $W$ in $\P (V_8)$, intersecting it with these lines, we would get a surface $S$, and 
the  base-locus of the quadrics containing $S$ would be $X$. 

The problem at this point is that we do not know 
how to locate $W$  in our abstract $\P (V_8)$. Logachev's idea is to use the birational
isomorphism $\kappa^{-1}:\P^4\dra W$ defined in \S\ref{birw} to reconstruct $W$.  Since its inverse $\kappa $ 
  is just the projection from the $2$-plane $\Pi=\langle c_X\rangle$, we first need to reconstruct this plane.

\smallskip\noindent {\it Step 2.} Let $L_\iota$ be the image of $L_\sigma$ in $F_\iota(X)$. 
For $[c]\in L_\iota $, the line $\ell_8([c])$ is tangent 
to the conic $c_U^\vee\subset\Pi$. So we recover the plane $\Pi$, and the
isomorphism $L_\sigma\simeq c_U^\vee\simeq c_U$. Moreover $f$ induces an isomorphism 
between $\Pi$ and the corresponding plane $\Pi'\subset\P (V_8')$, restricting to an 
isomorphism between the conics $c_U^\vee$ and $(c'_U)^\vee$. 

On the other hand, there exists a finite set of points $[c]$ in $ F_\iota(X)\moins L_\iota$ such that 
$\ell_8([c])$ meets $\Pi$. Moreover the intersection points with $\Pi$ belong to a unique conic in 
$\Pi$,\footnote{One must check that we get at least five such intersection points; see \cite{lo}.} 
namely $c_X\subset \Pi$. We can therefore recover this conic from our data, in such a way that 
$f$ induces an isomorphism between $c_X\subset \Pi$ and $c'_X\subset \Pi'$. 

\smallskip\noindent {\it Step 3.} Now  consider the projection from $V_8$ to 
$V_8/U^\vee_3$. The next claim is that we can reconstruct the rational cubic $\Gamma^0_3\subset 
\P (V_8/U^\vee_3)$. 

For this we observe that from the two  conics $c_X$ and 
$c_U^\vee$ in $\Pi$, we can define the elliptic curve 
$$\Gamma^1_X=\{(t,x)\in c_U^\vee\times c_X\mid x\in T_{c_U^\vee,t}\},$$ 
a double cover of both $c_X$ and $c_U^\vee$. Let ${\cS_{2,V_8}}$ denote the rank-$2$ tautological bundle 
on $G(2,V_8)$. There is an obvious map from $\P ({\cS_{2,V_8}})$ to $\P (V_8)$. On the other hand, 
we have a chain of inclusions 
$$\Gamma^1_X\hookrightarrow\P(\ell_8^*{\cS_{2,V_8}})\vert_{L_\iota}\hookrightarrow\P(\ell_8^*{\cS_{2,V_8}}).$$ 
The  rational map defined as the composition  
$$\P(\ell_8^*{\cS_{2,V_8}})\to \P( {\cS_{2,V_8}})\to \P (V_8)\dashrightarrow \P (V_8/U^\vee_3)$$ 
is not defined on $\Gamma^1_X$. 
However, since $\P(\ell_8^*{\cS_{2,V_8}})\vert_{L_\iota}$ is a divisor in $\P(\ell_8^*{\cS_{2,V_8}})$, the restriction to 
this divisor is a well-defined rational map. And   since $\Gamma^1_X$ is itself a divisor in 
$\P(\ell_8^*{\cS_{2,V_8}})\vert_{\ell}$, the  restriction is again well-defined. We end up with a well-defined 
map $\Gamma^1_X\to\P (V_8/U^\vee_3)$. 

Of course we can do the same for $X'$, and we get a compatible isomorphism between $\Gamma^1_X$ and $\Gamma^1_{X'}$.

Now recall (\S\ref{birw})  that the total transform of a point $x$ of $\Pi$   by the birational isomorphism $\kappa :W\dashrightarrow\P(V_8/U_3)$
  is a line $\ell_x$ that meets $\Gamma^0_3$ at the 
points corresponding to those two points in $c_U^\vee$ whose tangents pass through $x$.
This implies that a point $(t,x)$
in $\Gamma^1_X$ must be mapped in $\P (V_8/U_3)$ to some point $p(t,x)$ of the bisecant $\ell_t$. 

But there is another point $(t',x)$ on $\Gamma^1_X$,
mapping to a point $p(t',x)$ on the same bisecant line $\ell_x$. Since, in general,  $p(t,x)\ne  p(t',x)$,\footnote{This can be checked by a lengthy direct calculation.}
 we can   recover the line $\ell_x$ as the line joining 
these two points. Finally, if we consider the points  $(t,x)$ and $(t,x')$ of  $\Gamma^1_X$, we see that the lines $\ell_x$ and $\ell_{x'}$ must meet on the point of $\Gamma^0_3$ corresponding to $t$. 
So this process allows to reconstruct the curve  $\Gamma^0_3$, and an isomorphism with $c_U^\vee$.

\smallskip\noindent {\it Step 4.} From  $\Gamma^0_3\subset  \P (V_8/U_3)$, we can 
reconstruct a copy of $W$, say $W^\dagger\subset V_8^\dagger$, as the image of the birational map defined 
by the linear system of quadrics through $\Gamma^0_3$. We get the diagram
$$\begin{array}{ccccc}
\P (V_8) & \dashrightarrow &\P (V_8/U_3) &\stackrel{|\cI_{\Gamma^0_3}(2)|}{\dashrightarrow} & W^\dagger \\
\cup & & \cup & & \cap \\
W & & \Gamma^0_3 & & \P (V_8^\dagger) 
\end{array}$$
We do not know yet where to locate $W$ in $\P (V_8)$, but we know that the identification of 
$W$ with $W^\dagger$ induces a unique linear isomorphism between $\P (V_8)$ and $\P (V_8^\dagger)$. 
What we need to do is to recover this isomorphism from our data.  

First observe that to a general point $[c]$ of $F(X)$ should correspond a conic $(c,V_4)$, and
the point $\theta([c])=[U_3\cap V_4]$ in $\Pi=G(2,U_3)$. 

From our present knowledge, this point can be recovered as follows: the line 
$\ell_5([c])$, image of $\ell_8([c])$ in $G(2,V_8/U_3)$, meets the linear span of $\Gamma^0_3$ 
at one point. This point belongs to a unique bisecant line to $\Gamma^0_3$, meeting $\Gamma^0_3$ at two points 
which we can identify 
with two points of $c_U^\vee$. The intersection of the corresponding tangents is $\theta([c])$. 
Moreover, we know that the projection $\P (V_8)\dashrightarrow \P (V_8/U_3)$, once restricted to  $\P(M_{V_4}  )$, can be identified with the projection through that point $\theta([c])$; 
in particular this applies to $\ell_8([c])$, and we can conclude that $\ell_5([c])$ is the image of 
$\ell_8([c])$ by projection from $\theta([c])$.   

Now Logachev considers those pairs $(c,c')$ such that $\ell_5([c])$ and $\ell_5([c'])$ 
meet at one point, say $m(c,c')$; there is a two-dimensional family of such pairs. 
We know the image of $m(c,c')$ in $\P (V_8^\dagger)$, and we
want to reconstruct its preimage $n(c,c')$ in $\P (V_8)$. But this is easy: $\ell_5([c])$ and $\theta([c])$ 
generate a plane $\Pi_c$, which meets the corresponding plane $\Pi_{c'}$ at a unique
point, which is $n(c,c')$.\footnote{To see this, observe that $\Pi_c$ is contained in the $3$-plane 
$\Theta_c=\P(V_8)\cap \P(\wedge^2V_4)$, which meets $\Theta_{c'}$ along 
$\P(V_8)\cap \P(\wedge^2(V_4\cap V'_4))$; but in general this is just a point. So it must be $n(c,c')$.} 

We are thus able to  reconstruct the isomorphism $\P (V_8)\to\P (V_8^\dagger)$ on the special points 
$n(c,c')$. Since they are not contained in any hyperplane, they completely determine the isomorphism we were looking for.

\smallskip\noindent {\it Step 5.} In the preceding steps, we have reconstructed $W$ in $\P (V_8)$. 
Moreover, we have done that from purely projective constructions in terms of the family of lines $\ell_8([c])$,
when $[c]$ describes the abstract surface $F_m(X)$. This implies that the isomorphism $f:F_g(X)\isomto F_g(X')$
induces an isomorphism $\varphi$ between $\P (V_8)$ and $\P (V'_8)$, mapping $W$ to an isomorphic copy $W'$, and 
compatible with $\ell_8$ and $\ell'_8$. This means that we can recover a surface $S$ in $X$
by intersecting the lines $\ell_8(c)$ with $W$, and that $f$   maps $S$ to the corresponding surface 
$S'$ in $X'$. 

But $S$ determines $X$, as the intersection of the quadrics in $\P (V_8)$ containing 
$S$\footnote{Indeed, Logachev shows that the surface $S$ is cut out on $X$ by a 
hypersurface of degree $21$.} and this implies that $\varphi$ maps $X$ isomorphically  onto $X'$.
 
Finally, $u=f^{-1}\circ\varphi^*$ is an automorphism of $F_g(X)$ which descends to an automorphism $u_m$ of $F_m(X)$ 
such that $\ell_8\circ u_m=\ell_8$. Since $\ell_8$ is generically injective,    $u_m$ is the 
identity, hence so is $u$. The theorem is proved.  
\end{proof}

\end{document}